\numberwithin{equation}{section} 
\numberwithin{figure}{section} 
\setlist[enumerate]{label=$(\roman*)$, ref=$(\roman*)$}
\theoremstyle{plain}
\newtheorem{theoalph}{Theorem}
\newtheorem{theorem}{Theorem}[section]
\newtheorem{coro}[theorem]{Corollary}
\newtheorem{proposition}[theorem]{Proposition}
\newtheorem{lemma}[theorem]{Lemma}
\newtheorem{question}[theorem]{Question}
\newtheorem{conjecture}[theorem]{Conjecture}
\newenvironment{custtheo}[1]
{\innercustomthm}
{\endinnercustomthm}
\theoremstyle{definition}
\newtheorem{defi}[theorem]{Definition}
\theoremstyle{remark}
\newtheorem{remark}[theorem]{Remark}
\newtheoremstyle{citing}
{3pt}
{3pt}
{\itshape}
{}
{\bfseries}
{.}
{.5em}
{\thmnote{#3}}
\theoremstyle{citing}
\newtheorem*{generic}{}
\newcommand{\A}{\mathbb{A}}
\newcommand{\C}{\mathbb{C}}
\newcommand{\F}{\mathbb{F}}
\newcommand{\N}{\mathbb{N}}
\newcommand{\Q}{\mathbb{Q}}
\newcommand{\R}{\mathbb{R}}
\newcommand{\Z}{\mathbb{Z}}
\renewcommand{\H}{\mathbb{H}}
\renewcommand{\P}{\mathbb{P}}
\newcommand{\cA}{\mathcal{A}}
\newcommand{\cD}{\mathcal{D}}
\newcommand{\cF}{\mathcal{F}}
\newcommand{\cK}{\mathcal{K}}
\newcommand{\cM}{\mathcal{M}}
\newcommand{\cO}{\mathcal{O}}
\newcommand{\cR}{\mathcal{R}}
\newcommand{\sK}{\mathscr{K}}
\newcommand{\sM}{\mathscr{M}}
\newcommand{\sZ}{\mathscr{Z}}
\newcommand{\hE}{\widehat{E}}
\newcommand{\hK}{\widehat{K}}
\newcommand{\halpha}{\widehat{\alpha}}
\newcommand{\hgamma}{\widehat{\gamma}}
\newcommand{\hGamma}{\widehat{\Gamma}}
\newcommand{\hphi}{\widehat{\phi}}
\newcommand{\hvarphi}{\widehat{\varphi}}
\newcommand{\hPhi}{\widehat{\Phi}}
\newcommand{\tE}{\widetilde{E}}
\newcommand{\tvarphi}{\widetilde{\varphi}}
\newcommand{\tpsi}{\widetilde{\psi}}
\newcommand{\partn}[1]{{\smallskip \noindent \textbf{#1.}}}
\renewcommand{\=}{\coloneqq}
\newcommand{\dd}{\hspace{1pt}\operatorname{d}\hspace{-1pt}}
\DeclareMathOperator{\dist}{dist}
\DeclareMathOperator{\supp}{supp} 
\newcommand{\bfB}{\mathbf{B}}
\newcommand{\bfD}{\mathbf{D}}
\newcommand{\bfG}{\mathbf{G}}
\newcommand{\bfL}{\mathbf{L}}
\newcommand{\bfR}{\mathbf{R}}
\newcommand{\bfX}{\mathbf{X}}
\newcommand{\bfY}{\mathbf{Y}}
\DeclareMathOperator{\oO}{O}
\newcommand{\whf}{\widehat{f}}
\newcommand{\whj}{\widehat{j}}
\newcommand{\whw}{\widehat{w}}
\newcommand{\tcF}{\widetilde{\cF}}
\newcommand{\ssetminus}{\smallsetminus}
\DeclareMathOperator{\Aut}{Aut}
\DeclareMathOperator{\End}{End}
\DeclareMathOperator{\Hom}{Hom}
\DeclareMathOperator{\Iso}{Iso}
\DeclareMathOperator{\Ker}{Ker}
\DeclareMathOperator{\Div}{Div}
\DeclareMathOperator{\Gal}{Gal}
\DeclareMathOperator{\SL}{SL}
\renewcommand{\N}{\Z_{> 0}}
\newcommand{\Fp}{\F_p}
\newcommand{\Fpalg}{\overline{\F}_p}
\newcommand{\Qalg}{\overline{\Q}}
\newcommand{\Qp}{\Q_p}
\newcommand{\Qpalg}{\overline{\Q}_p}
\newcommand{\OQpalg}{\cO_{\Qpalg}}
\newcommand{\Cp}{\C_p}
\newcommand{\Op}{\cO_p}
\newcommand{\OK}{\cO_{\cK}}
\DeclareMathOperator{\ord}{ord}
\DeclareMathOperator{\sups}{sups}
\newcommand{\Ell}{Y}
\newcommand{\Sups}{Y_{\sups}(\Cp)}
\newcommand{\tSups}{Y_{\sups}(\Fpalg)}
\newcommand{\CM}{CM}
\newcommand{\odelta}{\overline{\delta}}
\DeclareMathOperator{\Berk}{Berk}
\newcommand{\AKber}{\A^1_{\Berk}}
\DeclareMathOperator{\can}{can}
\newcommand{\xcan}{x_{\can}}
\renewcommand{\t}{\mathbf{t}}
\newcommand{\kval}{v_p}
\newcommand{\FE}{\cF_{E}}
\newcommand{\tFE}{\tcF_{E}}
\newcommand{\pd}{\mathfrak{D}} 
\newcommand{\pfd}{\mathfrak{d}} 
\newcommand{\Qpfd}{\Qp(\sqrt{\pfd})}
\newcommand{\dBss}{\dist_{\Bss}}
\DeclareMathOperator{\nr}{nr}
\DeclareMathOperator{\tr}{tr}
\newcommand{\rf}{\Bbbk}
\newcommand{\rfk}{\rf_0}
\renewcommand{\ss}{e}
\newcommand{\Bss}{\bfB_{\ss}}
\newcommand{\Dss}{\bfD_{\ss}}
\newcommand{\hDss}{\widehat{\bfD}_{\ss}}
\newcommand{\Fss}{\cF_{\ss}}
\newcommand{\Gss}{\bfG_{\ss}}
\newcommand{\Piss}{\Pi_{\ss}}
\newcommand{\Rss}{\bfR_{\ss}}
\newcommand{\Xss}{\bfX_{\ss}}
\DeclareMathOperator{\Fix}{Fix}
\newcommand{\Fixss}{\Fix_{\ss}}
\newcommand{\Nr}{\mathbf{Nr}}
\newcommand{\NE}{\Nr_{E}}
\newcommand{\Npfd}{\Nr_{\pfd}}
\newcommand{\coset}{\mathfrak{N}}
\DeclareMathOperator{\Orb}{Orb}
\newcommand{\corbit}{\Orb_{\coset}(E)}
\newcommand{\corbitc}{\overline{\corbit}}
\newcommand{\VQ}{M_{\Q}}
\newcommand{\Cv}{\C_v}
\newcommand{\Cw}{\C_w}
\newcommand{\nh}{\operatorname{h}_{\operatorname{W}}}
\newcommand{\sm}{\mathfrak{j}}
\newcommand{\fsm}{\mathfrak{f}} 
\newcommand{\hsm}{\mathfrak{h}} 
\newcommand{\VK}{M_{K}}
\newcommand{\VhK}{M_{\hK}}
\newcommand{\Kalg}{\overline{K}}
\newcommand{\wf}{\mathbf{f}}
\title{There are at most finitely many singular moduli that are $S$-units}
\author{Sebasti\'an Herrero}
\address{
  Instituto de Matem\'aticas, Pontificia Universidad Cat\'olica de Valpara\'iso, Blanco Viel 596, Cerro Bar\'on, Valpara\'iso,
  Chile.}
\email{sebastian.herrero.m@gmail.com}
\author{Ricardo Menares}
\address{
  Facultad de Matem\'aticas, Pontificia Universidad Cat\'olica de Chile, Vicu\~na Mackenna 4860, Santiago, Chile.}
\email{rmenares@mat.uc.cl}
\author{Juan Rivera-Letelier}
\address{Department of Mathematics, University of Rochester.
  Hylan Building, Rochester, NY~14627, U.S.A.}
\email{riveraletelier@gmail.com}
\urladdr{\url{http://rivera-letelier.org/}}
\begin{document}

 \subjclass[2020]{Primary: 11F03, 11G15. Secondary: 11G16, 11J68, 37P45}
\keywords{Modular functions, complex multiplication, singular moduli, S-units}

\begin{abstract}
  We show that for every finite set of prime numbers~$S$, there are at most finitely many singular moduli that are $S$\nobreakdash-units.
  The key new ingredient is that for every prime number~$p$, singular moduli are $p$\nobreakdash-adically disperse.
  We prove analogous results for the Weber modular functions, the $\lambda$\nobreakdash-invariants and the McKay--Thompson series associated with the elements of the monster group.
  Finally, we also obtain that a modular function that specializes to infinitely many algebraic units at quadratic imaginary numbers must be a weak modular unit.
\end{abstract}

\maketitle

\section{Introduction}
\label{s:introduction}

A \emph{singular modulus} is the $j$\nobreakdash-invariant of an elliptic curve with complex multiplication.
These algebraic numbers lie at the heart of the theory of abelian extensions of imaginary quadratic fields, as they generate the ring class fields of quadratic imaginary orders.
This was predicted by Kronecker and referred to by himself as his \emph{liebsten Jugendtraum}.

A result going back at least to Weber, states that every singular modulus is an algebraic integer \cite[\S115, \emph{Satz~VI}]{Web08}.
Thus, the absolute norm of a singular modulus is a rational integer, and the same holds for a difference of singular moduli.
Gross and Zagier gave an explicit formula for the factorization of the absolute norms of differences of singular moduli~\cite{GroZag85}.
Roughly speaking, this formula shows that these absolute norms are highly divisible numbers.
In fact, Li showed recently that the absolute norm of every difference of singular moduli is divisible by at least one prime number~\cite{Li21}.
Equivalently, that no difference of singular moduli is an algebraic unit.
Li's work extends previous results of Habegger~\cite{Hab15} and of Bilu, Habegger and K{\"u}hne~\cite{BilHabKuh20}.
These results answered a question raised by Masser in 2011, which was motivated by results of Andr{\'e}--Oort type.

In view of these results, one is naturally led to look at differences of singular moduli whose absolute norms are only divisible by a given set of prime numbers.
To be precise, recall that for a set of prime numbers~$S$, an algebraic integer is an \emph{$S$\nobreakdash-unit} if the only prime numbers dividing its absolute norm are in~$S$.
The following is our main result.

\begin{generic}[Main Theorem]
  Let~$S$ be a finite set of prime numbers and~$\sm_0$ a singular modulus.
  Then, there are at most finitely many singular moduli~$\sm$ such that ${\sm-\sm_0}$ is an $S$\nobreakdash-unit.
\end{generic}

To prove this result we follow Habegger's original strategy in the case where ${S = \emptyset}$ in~\cite{Hab15}.
The main new ingredient is that for every prime number~$p$, singular moduli are $p$\nobreakdash-adically disperse (Theorem~\ref{t:disperse} in Section~\ref{ss:disperse}).
We also prove analogous results for a more general class of modular functions that includes the Weber modular functions, the $\lambda$\nobreakdash-invariants and the McKay--Thompson series associated with the elements of the monster group (Theorem~\ref{t:S-units-genus-0} in Section~\ref{ss:S-units}).
In the course of the proof of these results, we obtain that a modular function that specializes to infinitely many algebraic units at quadratic imaginary numbers must be a weak modular unit (Theorem~\ref{t:units-are-rare} in Section~\ref{ss:units-are-rare}).

We also propose a conjecture whose affirmative solution would yield a vast generalization of the Main Theorem.
The conjecture is that for every prime number~$p$, every algebraic number is $p$\nobreakdash-adically badly approximable by singular moduli (Conjecture~\ref{c:badly-approximable} in Section~\ref{ss:badly-approximable-Hauptmodul}).
We show that an affirmative solution to this conjecture, would imply a version of the Main Theorem for every nonconstant modular function~$f$ for a congruence or genus zero group and every algebraic value of~$f$ (Corollary~\ref{c:S-units-conditional-non-unit} in Section~\ref{s:proof-S-units}).

\subsection{Singular moduli that are $S$-units}
\label{ss:S-units}
Consider the usual action of~$\SL(2, \R)$ on the upper-half plane~$\H$ and consider the $j$\nobreakdash-invariant as a holomorphic function defined on~$\H$ that is invariant under~$\SL(2, \Z)$.
Moreover, denote by~$\Qalg$ the algebraic closure of~$\Q$ inside~$\C$.

A subgroup~$\Gamma$ of~$\SL(2, \R)$ is \emph{commensurable to~$\SL(2, \Z)$}, if the intersection ${\Gamma \cap \SL(2, \Z)}$ has finite index in~$\Gamma$ and in~$\SL(2, \Z)$.
For such a group, denote by~$X(\Gamma)$ the Riemann surface obtained by compactifying the quotient~$\Gamma \backslash \H$.
The \emph{genus of~$\Gamma$} is the genus of~$X(\Gamma)$.
A \emph{modular function for~$\Gamma$} is a meromorphic function defined on~$\H$ that is obtained by lifting the restriction to~$\Gamma \backslash \H$ of a meromorphic function defined on~$X(\Gamma)$.
A meromorphic function defined on~$\H$ is a modular function if and only if it is algebraically dependent with the $j$\nobreakdash-invariant over~$\C$ (Proposition~\ref{p:modular-functions}).

A modular function is \emph{defined over~$\Qalg$}, if it is algebraically dependent with the $j$\nobreakdash-invariant over~$\Qalg$.
In this case, a \emph{singular modulus of~$f$} is a finite value that~$f$ takes at a quadratic imaginary number.
Every singular modulus of~$f$ is in~$\Qalg$ (Proposition~\ref{p:singular-moduli}$(i)$ in Section~\ref{ss:singular-moduli}).
We show that every modular function whose Fourier series expansion at~$i \infty$ has coefficients in~$\Qalg$ is defined over~$\Qalg$ (Proposition~\ref{p:arithmetically-modular} in Appendix~\ref{s:arithmetically-modular}).

Recall that for a set of prime numbers~$S$, a number in~$\Qalg$ is an \emph{$S$\nobreakdash-unit} if the leading and constant coefficients of its minimal polynomial in~$\Z[X]$ have all their prime factors in~$S$.

\begin{theoalph}
  \label{t:S-units-genus-0}
  Let~$f$ be a nonconstant modular function defined over~$\Qalg$ for a genus zero group.
  Moreover, let~$\fsm_0$ be a singular modulus of~$f$ and let~$S$ be a finite set of prime numbers.
  Then, there are at most finitely many singular moduli~$\fsm$ of~$f$ such that ${\fsm -\fsm_0}$ is an $S$\nobreakdash-unit.
\end{theoalph}

Since~$\SL(2, \Z)$ is of genus zero and the $j$\nobreakdash-invariant is a nonconstant modular function for~$\SL(2, \Z)$ defined over~$\Qalg$, the Main Theorem is Theorem~\ref{t:S-units-genus-0} applied to the $j$\nobreakdash-invariant.
Theorem~\ref{t:S-units-genus-0} also applies to the Weber modular functions, the $\lambda$\nobreakdash-invariants and the McKay--Thompson series associated with the elements of the monster group.
See Section~\ref{ss:notes} for details.

The following corollary is a direct consequence of Theorem~\ref{t:S-units-genus-0} with~$f$ equal to the $j$\nobreakdash-invariant and~$\fsm_0 = 0$, which is the $j$\nobreakdash-invariant of every elliptic curve whose endomorphism ring is isomorphic to~${\Z \left[ \tfrac{1 + \sqrt{3}i}{2} \right]}$.

\begin{coro}
  \label{c:S-units-j}
  For every finite set of prime numbers~$S$, there are at most finitely many singular moduli of the $j$\nobreakdash-invariant that are $S$\nobreakdash-units.
\end{coro}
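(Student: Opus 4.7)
The plan is to deduce this corollary as an immediate specialization of Theorem~\ref{t:S-units-Hauptmodul} to the pair $(f, \fsm_0) = (j, 0)$. The task therefore reduces to verifying that $j$ and $0$ fit the three hypotheses of Theorem~\ref{t:S-units-Hauptmodul}: that $f$ is a \emph{Hauptmodul} for a genus zero subgroup of $\GL^+(2, \Q)$, that $f$ is defined over $\Qalg$, and that $\fsm_0$ is a singular modulus of $f$.

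First I would check that the $j$-invariant is a \emph{Hauptmodul} for a genus zero subgroup of $\GL^+(2, \Q)$: by definition $j$ is $\SL(2, \Z)$-invariant and $\SL(2, \Z) \subset \GL^+(2, \Q)$, while classically $X(\SL(2, \Z))$ is a Riemann sphere with $j$ itself providing the biholomorphism to it. Since the Fourier expansion of $j$ at $i\infty$ has integer coefficients, Proposition~\ref{p:arithmetically-modular} ensures that $j$ is defined over $\Qalg$.

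Next I would verify that $0$ is a singular modulus of $j$: at the quadratic imaginary point $\tau_0 = \tfrac{1 + \sqrt{-3}}{2}$ the value $j(\tau_0) = 0$ is classical, corresponding to the CM elliptic curve whose endomorphism ring is isomorphic to $\Z\!\left[\tfrac{1+\sqrt{-3}}{2}\right]$. Hence $0 \in \Qalg$ appears as a finite value of $j$ at a quadratic imaginary number, placing us in the setting of Section~\ref{ss:S-units}.

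With both hypotheses in place, Theorem~\ref{t:S-units-Hauptmodul} applied to $(j, 0)$ immediately yields that only finitely many singular moduli $\fsm$ of $j$ satisfy that $\fsm = \fsm - 0$ is an $S$-unit, which is exactly Corollary~\ref{c:S-units-j}. There is no real obstacle to overcome here: all of the analytic and arithmetic work is absorbed into Theorem~\ref{t:S-units-Hauptmodul}, and only standard classical facts about the $j$-invariant are required for the specialization.
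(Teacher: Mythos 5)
Your proposal is correct and follows exactly the same route as the paper: apply Theorem~\ref{t:S-units-Hauptmodul} with $f = j$ and $\fsm_0 = 0$, noting that $j$ is a \emph{Hauptmodul} for the genus zero subgroup $\SL(2,\Z)$ of $\GL^+(2,\Q)$, that $j$ is defined over $\Qalg$, and that $0 = j\bigl(\tfrac{1+\sqrt{3}i}{2}\bigr)$ is a singular modulus. Your hypothesis checks are slightly more detailed than the paper's one-line justification (in particular, invoking Proposition~\ref{p:arithmetically-modular} for the ``defined over $\Qalg$'' point, where one could also just note that $X - Y$ is a modular polynomial of $j$), but the argument is the same.
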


When restricted to the $j$\nobreakdash-invariant and~$S=\emptyset$, Theorem~\ref{t:S-units-genus-0} is a particular instance of~\cite[Theorem~2]{Hab15} and of \cite[Corollary~1.3 with~${m = 1}$]{Li21}.\footnote{See Theorem~\ref{t:units-are-rare} in Section~\ref{ss:units-are-rare} for an extension of Habegger's result to a general modular function defined over~$\Qalg$ and Section~\ref{ss:notes} for further comments on Li's result.}
This last result extends the main result of~\cite{BilHabKuh20}, that in the case where ${S = \emptyset}$ the set of singular moduli in Corollary~\ref{c:S-units-j} is empty.
In contrast to these results, the proof of Theorem~\ref{t:S-units-genus-0}, which follows the strategy of proof of \cite[Theorem~2]{Hab15} in the case where~$f$ is the $j$\nobreakdash-invariant and ${S = \emptyset}$, does not give an effectively computable upper bound.

The number~$- 2^{15}$ is an example of a singular modulus of the $j$\nobreakdash-invariant that is a $\{2\}$\nobreakdash-unit.
In fact, $- 2^{15}$ is the $j$\nobreakdash-invariant of every elliptic curve whose endomorphism ring is isomorphic to ${\Z \left[ \tfrac{1 + \sqrt{11}i}{2} \right]}$.
Numerical computations suggest an affirmative answer to the following question, see, \emph{e.g.}, \cite{Sut00}.

\begin{question}
  Is $- 2^{15}$ the unique singular modulus of the $j$\nobreakdash-invariant that is a $\{2\}$\nobreakdash-unit?
\end{question}

For ${\sm_0 = 0}$ or~$1728$ and for the infinite set of prime numbers~$S$ for which every elliptic curve with $j$\nobreakdash-invariant equal to $\sm_0$ has potential ordinary reduction, \mbox{Campagna} shows the following in~\cite{Cam21}: If~$\sm$ is a singular modulus of the $j$\nobreakdash-invariant such that ${\sm - \sm_0}$ is an $S$\nobreakdash-unit, then ${\sm - \sm_0}$ is in fact an algebraic unit.
A combination of the Main Theorem and the arguments of \mbox{Campagna}, shows that when~$\sm_0 = 0$ or~$1728$ the conclusion of the Main Theorem holds for some infinite sets of prime numbers~$S$, see Section~\ref{ss:notes}.

\subsection{Singular moduli are disperse}
\label{ss:disperse}
Denote by~$\VQ$ the set of all prime numbers together with~$\infty$, put~$\C_{\infty} \= \C$ and denote by~$| \cdot |_{\infty}$ the usual absolute value on~$\C$.
Moreover, for each prime number~$p$ let~$(\Cp, | \cdot |_p)$ be a completion of an algebraic closure of the field of $p$\nobreakdash-adic numbers~$\Qp$, and identify the algebraic closure of~$\Q$ inside~$\Cp$ with~$\Qalg$.
For all~$v$ in~$\VQ$, $\alpha$ in~$\Cv$ and~$r > 0$, put
\begin{displaymath}
  \bfD_v(\alpha, r)
  \=
  \{ z \in \Cv \colon |z - \alpha|_v < r \}.
\end{displaymath}

For a finite extension~$K$ of~$\Q$ inside~$\Qalg$, consider the Galois group~$\Gal(\Qalg|K)$ and for each~$\alpha$ in~$\Qalg$ denote by~$\oO_K(\alpha)$ its orbit by~$\Gal(\Qalg|K)$.
The following result is stated for a modular function that is ``defined over~$K$'' in the sense of Definition~\ref{d:definition-field} in Section~\ref{ss:singular-moduli}.
For a modular function to be defined over~$K$, it is sufficient that its Fourier series expansion at~$i \infty$ has coefficients in~$K$ (Proposition~\ref{p:arithmetically-modular} in Appendix~\ref{s:arithmetically-modular}).

\begin{theoalph}[Singular moduli are disperse]
  \label{t:disperse}
  Let~$K$ be a finite extension of~$\Q$ inside~$\C$ and let~$f$ be a nonconstant modular function defined over~$K$.
  Then, for all~$v$ in~$\VQ$, $\alpha$ in~$\Cv$ and~$\varepsilon > 0$, there is~$r > 0$ such that the following property holds.
  For every singular modulus~$\fsm$ of~$f$ such that~$\# \oO_K(\fsm)$ is sufficiently large, we have
  \begin{equation}
    \label{eq:1}
    \# \left( \oO_K(\fsm) \cap \bfD_v(\alpha, r) \right)
    \le
    \varepsilon \cdot \# \oO_K(\fsm).
  \end{equation}
\end{theoalph}

We first establish this result for the $j$\nobreakdash-invariant, and then deduce the general case from this special case.
The case where ${v = \infty}$ and~$f$ is the~$j$\nobreakdash-invariant is a direct consequence of the fact that the asymptotic distribution of the singular moduli of the $j$\nobreakdash-invariant is given by a nonatomic measure~\cite{Duk88,CloUll04}.
In the case where~$v$ is a prime number~$p$, there are infinitely many measures that describe the $p$\nobreakdash-adic asymptotic distribution of the singular moduli of the $j$\nobreakdash-invariant.
The main ingredient in the proof of Theorem~\ref{t:disperse} is that none of these measures has an atom in~$\Cp$ (Theorem~\ref{t:CM-nonatomic} in Section~\ref{s:nonatomic}).
We also prove an analogous result for the Hecke orbit of every point in~$\Cp$ (Theorem~\ref{t:Hecke-orbits-nonatomic} in Section~\ref{ss:Hecke-orbits-nonatomic}).
As a consequence, we obtain that a Hecke orbit cannot have a significant proportion of good approximations of a given point in~$\Cp$ (Corollary~\ref{c:Hecke-orbits-nonatomic} in Section~\ref{ss:Hecke-orbits-nonatomic}), thus improving a result of Charles in~\cite{Cha18}.
The proofs of these results are based on the description of all the measures describing the $p$\nobreakdash-adic asymptotic distribution of singular moduli and Hecke orbits, given in the companion papers~\cite{HerMenRiv20,HerMenRivII}.

\subsection{Approximation by singular moduli}
\label{ss:badly-approximable-Hauptmodul}
Let~$f$ be a nonconstant modular function, denote by~$\Gamma$ its stabilizer in~$\SL(2, \R)$ and denote by~$f_0$ the meromorphic function defined on~$X(\Gamma)$ induced by~$f$.
A complex number is a \emph{cuspidal value of~$f$}, if it is a value that~$f_0$ takes at a cusp of~$X(\Gamma)$.
Note that the number of cuspidal values is finite.
Moreover, $f$ is a \emph{Hauptmodul} if~$X(\Gamma)$ is of genus zero and~$f_0$ is a biholomorphism from~$X(\Gamma)$ onto the Riemann sphere.

Let~$f$ be a nonconstant modular function defined over~$\Qalg$ and let~$v$ be in~$\VQ$.
A number~$\alpha$ in~$\Cv$ is \emph{badly approximable in~$\Cv$ by the singular moduli of~$f$}, if there are constants~$A > 0$ and~$B$ such that for every singular modulus~$\fsm$ of~$f$ different from~$\alpha$ we have
\begin{equation}
  \label{eq:2}
  - \log |\fsm - \alpha|_v
  \le
  A \log (\# \oO_{\Q}(\fsm)) + B.
\end{equation}
If this property does not hold, then~$\alpha$ is \emph{well approximated in~$\Cv$ by the singular moduli of~$f$}.

\begin{theoalph}
  \label{t:badly-approximable-Hauptmodul}
  Let~$f$ be a nonconstant modular function defined over~$\Qalg$, let~$\fsm_0$ be a singular modulus of~$f$ and let~$v$ be in~$\VQ$.
  In the case where ${v = \infty}$, assume that~$\fsm_0$ is a non-cuspidal value of~$f$ and in the case where~$v$ is a prime number, assume that~$f$ is a \emph{Hauptmodul}.
  Then, $\fsm_0$ is badly approximable in~$\Cv$ by the singular moduli of~$f$.
\end{theoalph}

In the case where ${v = \infty}$, the hypothesis that~$\fsm_0$ is a non-cuspidal value of~$f$ is necessary, see Proposition~\ref{p:cuspidal-are-approximated}$(i)$.

In the case where~$f$ is the $j$\nobreakdash-invariant and ${v = \infty}$, the theorem above is a direct consequence of a result of Habegger \cite[Lemmas~5 and~8 and formula~(11), or the proof of Lemma~6]{Hab15}.
In fact, using results of David and Hirata-Kohno in~\cite{DavHir09}, Habegger proved the stronger result that every algebraic number is badly approximable in~$\C$ by the singular moduli of the $j$\nobreakdash-invariant.
It is unclear to us whether the analogous result holds in the $p$\nobreakdash-adic setting.

\begin{conjecture}
  \label{c:badly-approximable}
  Let~$p$ be a prime number.
  Then, every algebraic number is badly approximable in~$\Cp$ by the singular moduli of the $j$\nobreakdash-invariant.
\end{conjecture}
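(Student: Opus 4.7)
The plan is to adapt Habegger's Archimedean strategy from~\cite{Hab15} to the $p$\nobreakdash-adic setting. In the Archimedean case one combines a near-diagonal estimate for the $j$\nobreakdash-invariant around a point $\tau_0 \in \H$ with Baker-type lower bounds of David and Hirata-Kohno on linear forms in elliptic logarithms to bound $|\tau - \tau_0|$ from below when $\tau$ is a quadratic imaginary CM point of large discriminant. The natural $p$\nobreakdash-adic analogue is to split into the three cases determined by the reduction type of~$\alpha$ at~$p$: bad, good ordinary, and good supersingular. The case of bad reduction (in particular $|\alpha|_p > 1$) should follow from the Tate uniformization near the cusp at~$i\infty$ combined with the known growth of the denominators of singular moduli in that direction.

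For $\alpha$ of good ordinary reduction, the natural substitute for the upper half plane coordinate is the Serre--Tate parameter~$t$ on the residue disk of~$\alpha$: the $j$\nobreakdash-invariant is a $p$\nobreakdash-adic analytic function of~$t$ with nonvanishing derivative near~$\alpha$, so
\[
  |\fsm - \alpha|_p \asymp |t(\fsm) - t(\alpha)|_p,
\]
and the singular moduli~$\fsm$ lying in that residue disk correspond to the $p$\nobreakdash-power torsion values of~$t$ under the canonical formal multiplicative group structure supplied by Serre--Tate theory (non-maximal CM orders of $p$\nobreakdash-power conductor producing non-canonical lifts of~$\tilde\alpha$). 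The problem then reduces to a lower bound for $|t(\fsm) - t(\alpha)|_p$ with $t(\fsm)$ a $p$\nobreakdash-power root of unity and $t(\alpha) \in 1 + \mathfrak{m}_{\Cp}$ algebraic, which is exactly the context of Yu's theorem on $p$\nobreakdash-adic linear forms in logarithms on~$\widehat{\mathbb{G}}_m$. For $\alpha$ of supersingular reduction one replaces Serre--Tate by Gross's quasi-canonical liftings and the Lubin--Tate formal module of height~$2$ governing the deformation space of~$\tilde\alpha$, reducing the problem to the analogous $p$\nobreakdash-adic transcendence estimate for that formal module.

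The main obstacle is the transcendence input. Yu's theorem and its refinements on~$\widehat{\mathbb{G}}_m$ are well developed and should suffice in the good ordinary case. By contrast, the corresponding $p$\nobreakdash-adic estimates for Lubin--Tate formal modules of height~$2$ are considerably less developed, and even qualitative statements of the shape needed in the supersingular case appear to lie beyond the current state of the art of $p$\nobreakdash-adic transcendence, the Archimedean input of David--Hirata-Kohno making full use of elliptic logarithms for which no satisfactory substitute exists over~$\Cp$ at a supersingular point. A realistic intermediate target is therefore Conjecture~\ref{c:badly-approximable} restricted to~$\alpha$ with potentially good ordinary reduction at~$p$, with the supersingular case requiring essentially new Diophantine input.
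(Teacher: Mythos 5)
The statement you were asked to prove is explicitly labelled a \emph{conjecture}, and the paper does not prove it: immediately before stating it, the authors write ``It is unclear to us whether the analogous result holds in the $p$\nobreakdash-adic setting.'' So the honest answer to the prompt ``give a proof'' is ``no proof is currently known,'' and to your credit your proposal ends with exactly that assessment. You are not offering a proof but a roadmap, together with a frank diagnosis of where the roadmap breaks down, and that diagnosis is consistent with the paper's own position.

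That said, it is worth comparing your sketch with what the paper actually does prove, namely the weaker statement that every \emph{singular modulus} $\sm_0$ is badly approximable in $\Cp$ by singular moduli (Proposition~\ref{p:badly-approximable-j}, extended to Hauptmoduln in Theorem~\ref{t:badly-approximable-Hauptmodul}). That proof is purely algebraic--geometric and uses none of the transcendence technology you invoke: it combines the isolation of CM points outside the supersingular locus, the Gross--Hopkins deformation-space covering $\Piss \colon \hDss \to \Dss$ and the associated bi-H\"older estimate (Theorem~\ref{t:Pi-properties}), the quaternion-order commutator bound of Lemma~\ref{l:fixed-to-fixer}, and the formula relating $\Lambda_{d(fp^r)^2}$ to the canonical branch $\t$ of $T_p$ (Theorem~\ref{t:CM-from-canonical}). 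The essential ingredient making this work for a \emph{singular} $\sm_0$ is that the endomorphism $\phi_0$ of the reduction gives an honest element of the quaternion order, so that $\ord_{\Bss}(\hphi_0\hphi - \hphi\hphi_0)$ can be bounded by the integer $\deg(\phi_0\phi - \phi\phi_0)$. For a general algebraic $\alpha$ there is no such endomorphism, and this algebraic route is blocked; that is exactly what makes the full conjecture hard, and it is a different obstruction than the transcendence one you emphasize (though they are facets of the same difficulty).

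Two technical cautions about your sketch. First, in the ordinary case you want to invoke Yu's theorem to lower-bound $|t(\fsm) - t(\alpha)|_p$ with $t(\fsm)$ a $p$-power root of unity. Note that roots of unity lie in the kernel of the $p$\nobreakdash-adic logarithm, so the reduction to a linear form in logarithms is not immediate; you would need a formulation that handles the ``root of unity minus algebraic'' case directly (or a cyclotomic reformulation), and it is not obvious to me that the known estimates have the right dependence on the conductor. Second, your proposed case split by reduction type is slightly off for the bad-reduction case: since every singular modulus is an algebraic integer, any $\alpha$ with $|\alpha|_p > 1$ is at definite $p$\nobreakdash-adic distance from all of them, so that case is trivial and requires no analysis of Tate curves. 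The genuine difficulty is, as you say, concentrated in the supersingular residue disks, and there your honest conclusion---that this would require $p$\nobreakdash-adic Baker-type estimates for height-two Lubin--Tate formal groups that do not currently exist---is the right thing to say.
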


We show that an affirmative solution to this conjecture would yield a version of Theorem~\ref{t:S-units-genus-0} for a general congruence or genus zero group and a general algebraic value (Corollary~\ref{c:S-units-conditional-non-unit} in Section~\ref{s:proof-S-units}).

\subsection{Weak modular units are the only source of singular units}
\label{ss:units-are-rare}
A \emph{modular unit} is a modular function without zeros or poles in~$\H$.
A \emph{weak modular unit} is a modular function~$u$ for which~$0$ is a cuspidal value of~$u$ and of~$\frac{1}{u}$.
Note that every nonconstant modular unit is a weak modular unit.

The singular moduli of modular units defined over~$\Qalg$ are a natural source of algebraic units, see, \emph{e.g.}, \cite{KubLan81}.
Roughly speaking, the following result asserts that among modular functions defined over~$\Qalg$, weak modular units are the only source of singular moduli that are algebraic units.

\begin{theoalph}
  \label{t:units-are-rare}
  Let~$f$ be a nonconstant modular function defined over~$\Qalg$ that is not a weak modular unit.
  Then, there are at most finitely many singular moduli of~$f$ that are algebraic units.
\end{theoalph}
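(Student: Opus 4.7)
The plan is to adapt Habegger's strategy from~\cite[Theorem~2]{Hab15} (the case $f = j$, $S = \emptyset$) to general modular functions, using Theorem~\ref{t:disperse} (dispersion) and Theorem~\ref{t:badly-approximable-Hauptmodul} (bad approximability) as the key new ingredients. Since $f$ is not a modular unit, it has a zero or a pole at some non-cusp point $\tau_0 \in \H$. Replacing $f$ by~$1/f$ if necessary (which preserves being a modular function defined over~$\Qalg$ and interchanges $\fsm$ with~$1/\fsm$, both algebraic units or both not), we may assume $f$ has a zero at~$\tau_0$. Since $f$ and~$j$ are algebraically dependent over~$\Qalg$, writing $P(f, j) = 0$ with $P \in \Qalg[X, Y]$, the $j$\nobreakdash-coordinates of the finitely many zeros of~$f$ in~$\H$ form a finite subset $\{\alpha_0, \ldots, \alpha_k\} \subset \Qalg$ (roots of~$P(0, Y)$ reached on~$\H$); near each $\tau_i$ with $j(\tau_i) = \alpha_i$ we have the local comparison $|f(\tau)|_\infty \asymp |j(\tau) - \alpha_i|_\infty^{s_i}$ for some rational $s_i > 0$ determined by the orders of vanishing of~$f$ and of~$j - \alpha_i$ at~$\tau_i$.

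Suppose for contradiction that infinitely many singular moduli $\fsm_l = f(\tau_l)$ of~$f$ are algebraic units, so $n_l \= [\Q(\fsm_l):\Q] \to \infty$. Since~$\fsm_l$ is a unit we have $|\sigma(\fsm_l)|_v = 1$ at every finite place~$v$ and $\sum_\sigma \log |\sigma(\fsm_l)|_\infty = 0$, hence
\[
  h(\fsm_l) \;=\; \frac{1}{n_l} \sum_{\sigma \,:\, |\sigma(\fsm_l)|_\infty < 1} \bigl(-\log |\sigma(\fsm_l)|_\infty\bigr).
\]
Each contributing conjugate corresponds to a CM point $\tau_\sigma$ close (on $X(\Gamma)$) either to some~$\tau_i$ or to a cusp at which~$f$ vanishes. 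Applying Theorem~\ref{t:disperse} to the $j$\nobreakdash-invariant with $v = \infty$ and $\alpha = \alpha_i$, for each $\varepsilon > 0$ there is $r > 0$ such that at most $\varepsilon n_l$ of the conjugates $\tau_\sigma$ satisfy $j(\tau_\sigma) \in \bfD_\infty(\alpha_i, r)$ for $n_l$ large. For such~$\sigma$, the archimedean bad approximability of $\alpha_i$ by singular moduli of~$j$ (extending Theorem~\ref{t:badly-approximable-Hauptmodul} from singular moduli to arbitrary algebraic numbers at~${v = \infty}$, via Habegger's use of~\cite{DavHir09}) yields $|j(\tau_\sigma) - \alpha_i|_\infty \ge c\, n_l^{-A}$. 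Combining with the local comparison and treating the cusp contribution by an analogous dispersion estimate near~$j = \infty$, we obtain
\[
  h(\fsm_l) \;\le\; C_1 \varepsilon \log n_l + C_2
\]
with constants $C_1, C_2$ independent of~$\varepsilon$ and~$l$.

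For the opposite inequality, the known growth of the Weil height of singular moduli of~$j$ (namely $h(j(\tau_l)) \asymp \log |D_{\tau_l}| \asymp \log n_l$), transferred to~$\fsm_l$ via the algebraic dependence $P(f,j) = 0$, gives $h(\fsm_l) \ge c_0 \log n_l$ for some $c_0 > 0$ and all $l$ sufficiently large. Choosing $\varepsilon$ with $\varepsilon C_1 < c_0$ then yields $(c_0 - \varepsilon C_1) \log n_l \le C_2$, contradicting $n_l \to \infty$. Hence only finitely many singular moduli of~$f$ are algebraic units. The main technical obstacle is handling the cusp contribution rigorously: Theorem~\ref{t:disperse} as stated requires $\alpha \in \Cv$ rather than~$\alpha = \infty$, so one needs a separate dispersion estimate near the cusp together with quantitative control on the order of vanishing of~$f$ there, ensuring that the coefficient of $\log n_l$ produced by the cusp cancels against the corresponding cusp-term in the lower bound (which balances out precisely because~$\fsm_l$ is a unit). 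A secondary difficulty is the effective transfer of the height lower bound from~$j(\tau_l)$ to~$\fsm_l$, which must be made uniform in~$l$.
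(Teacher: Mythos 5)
Your proposal follows the same overall strategy as the paper's: combine a Mahler/product-formula computation of the Weil height, Theorem~\ref{t:disperse} (dispersion), archimedean bad approximability (Habegger's extension via David--Hirata-Kohno), and a Colmez-type lower bound for heights. The paper, however, organizes this more systematically: Theorem~\ref{t:units-are-rare} is deduced as a direct corollary of Theorem~\ref{t:S-units-conditional} applied with $S=\emptyset$ and $\alpha=0$ to whichever of $f$ or $1/f$ has a zero in $\H$; the proof of Theorem~\ref{t:S-units-conditional} isolates the ingredients as Lemma~\ref{l:Mahler}, Proposition~\ref{p:badly-approximable-conditional}, Lemma~\ref{l:Colmez-bound-general} and Theorem~\ref{t:disperse}. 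What you call the ``secondary difficulty'' (uniform transfer of the Colmez lower bound from $j(\tau)$ to $f(\tau)$) is precisely Lemma~\ref{l:Colmez-bound-general}, proved via Silverman's height inequalities applied to the coefficients of the modular polynomial $\Phi$, so that part of your sketch is on solid ground.

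The genuine gap is the one you flag yourself: the cusp contribution. Your suggestion to handle it ``by an analogous dispersion estimate near $j=\infty$'' does not work as stated, and the ``cancellation because $\fsm_l$ is a unit'' heuristic is unjustified. If $f$ vanishes at a cusp (which is compatible with $0$ being a value of $f$ attained in $\H$ --- see Remark~\ref{r:omitted-vs-cusp} and the example $g=j/(j^2-1)$), then for a conjugate $\sigma$ with $j(\tau_\sigma)$ near the cusp one has $-\log|\sigma(\fsm_l)|\sim s\log|j(\tau_\sigma)|$, and $\log|j(\tau_\sigma)|$ can be as large as a constant times $\sqrt{|D|}$, which is not $O(\log n_l)$. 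Dispersion only tells you that a small \emph{proportion} of conjugates lies near the cusp; it does not bound their individual contributions, and the total cusp contribution to the height is comparable to $\nh(j(\tau_l))\asymp\log|D|$, i.e.\ of the same order as the Colmez lower bound rather than negligibly smaller. In the paper's framework this concern shows up as the step where Lemma~\ref{l:reduction-to-j}$(ii)$ is used to deduce $\min\{|j(\tau)-z_0|_\infty:z_0\in Z\}\le C|f(\tau)-\alpha|_\infty^\theta$; when the leading coefficient of $\Phi(X,Y)$ in $X$ vanishes at $Y=\alpha$, roots $z$ of $\Phi(X,w)$ escape to infinity as $w\to\alpha$, and that inequality fails --- this is exactly the algebraic incarnation of the cusp contribution. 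You should examine that lemma carefully and supply an argument (or an extra hypothesis excluding the cusp case, e.g.\ requiring $\Phi(X,\alpha)$ to have full degree $\delta_X$, which forces $f^{-1}(\alpha)$ to avoid the cusps) before the proposal can be considered a complete proof.
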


We also show that an affirmative solution to Conjecture~\ref{c:badly-approximable} would imply a version of Theorem~\ref{t:units-are-rare} for $S$\nobreakdash-units (Corollary~\ref{c:S-units-conditional-non-weak} in Section~\ref{s:proof-S-units}) and a version of Theorem~\ref{t:units-are-rare} that holds under the weaker hypothesis that~$f$ is not a modular unit, but that is restricted to congruence or to genus zero groups (Corollary~\ref{c:S-units-conditional-non-unit} in Section~\ref{s:proof-S-units}).
Note that for every modular unit~$f$ defined over~$\Qalg$, there is a finite set of prime numbers~$S$ such that every singular modulus of~$f$ is an $S$\nobreakdash-unit, see Corollary~\ref{c:elliptic-S-units}$(ii)$ in Section~\ref{ss:cuspidal-values}.

An \emph{elliptic unit} is an algebraic unit that is the value of a modular unit defined over~$\Qalg$ at a quadratic imaginary number.
A natural problem that arises from Theorem~\ref{t:units-are-rare} is to determine those modular units that specialize to infinitely many elliptic units at quadratic imaginary numbers.
Examples of such can be easily extracted from \mbox{Weber}'s book \cite{Web08}.
Recall that the \emph{Weber modular functions}~$\wf$, $\wf_1$ and~$\wf_2$ are given in terms of Dedekind's~$\eta$ function by
\begin{equation}
  \label{eq:3}
  \wf(\tau)
  \=
  \exp \left( - \frac{\pi i}{24} \right) \frac{\eta \left( \frac{\tau + 1}{2} \right)}{\eta(\tau)},
  \wf_1(\tau)
  \=
  \frac{\eta \left( \frac{\tau}{2} \right)}{\eta(\tau)}
  \text{ and }
  \wf_2(\tau)
  \=
  \sqrt{2}
  \frac{\eta(2\tau)}{\eta(\tau)},
\end{equation}
see, \emph{e.g.}, \cite[\S34, (9)]{Web08}.
If~$p$ is a prime number satisfying ${p \equiv -1 \mod 8}$ and we put ${\tau_p \= \sqrt{p}i}$, then the singular modulus~$\wf \left( \frac{\tau_p - 1}{\tau_p + 1} \right)$ of~$\wf$ is equal to~${\frac{\sqrt{2}}{\wf(\tau_p)}}$ by~\cite[\S34, (18)]{Web08} and it is an algebraic unit by~\cite[\S142, p. 540]{Web08}.
Together with~\cite[\S34, (13), (14)]{Web08}, this implies that the singular moduli~$\wf_1 \left( - \frac{2}{\tau_p + 1} \right)$ and~$\wf_2 \left( \frac{\tau_p + 1}{2} \right)$ of~$\wf_1$ and~$\wf_2$ are both algebraic units.

Other examples of modular units that specialize to infinitely many elliptic units can be found in \cite{KubLan81}.
We mention the \emph{$\lambda$\nobreakdash-invariants} or \emph{modular~$\lambda$ functions}.
These are six \emph{Hauptmoduln} for the principal congruence group of level two, which can be defined as the roots of
\begin{equation}
  \label{eq:4}
  256 (1 - X + X^2)^3 - j X^2(1 - X)^2 = 0,
\end{equation}
see, \emph{e.g.}, \cite[Chapter~18, Section~6]{Lan87}.
Clearly, every singular modulus of a $\lambda$\nobreakdash-invariant is a $\{2 \}$\nobreakdash-unit.
By, \emph{e.g.}, \cite[Chapter~12, Section~2, Corollary of Theorem~5]{Lan87} or the more recent results of Yang, Yin and Yu \cite[Theorem~1.1]{YanYinYu21}, each of the six $\lambda$\nobreakdash-invariants has infinitely many singular moduli that are algebraic units.\footnote{Although these results only apply directly to one of the six $\lambda$\nobreakdash-invariants, they automatically imply analogous results for each of the remaining five $\lambda$\nobreakdash-invariants.
  Note that for every pair of $\lambda$\nobreakdash-invariants~$\lambda_0$ and~$\lambda_1$, there is~$\gamma$ in~$\SL(2, \Z)$ such that ${\lambda_1 = \lambda_0 \circ \gamma}$.}

To prove Theorem~\ref{t:units-are-rare}, we follow the strategy of proof of \cite[Theorem~2]{Hab15}.
In particular, we use \cite[Lemmas~5 and~8 and formula~(11)]{Hab15}, whose proof is based on results of David and Hirata--Kohno in \cite{DavHir09}.

\subsection{Notes and references}
\label{ss:notes}

Theorem~\ref{t:S-units-genus-0} applies to the Weber modular functions~$\wf$, $\wf_1$ and~$\wf_2$, see Section~\ref{ss:units-are-rare} for the definition.
In fact, $\wf_2$ is a \emph{Hauptmodul} by \cite[Theorem~1.3(2)(a) and p.~19]{YanYin16}, and therefore so are~$\wf$ and~$\wf_1$ by \cite[\S34, (13) and (14)]{Web08}.
On the other hand, each of these functions is defined over~$\Q$ in the sense of Definition~\ref{d:definition-field} in Section~\ref{ss:modular-functions} because it is a root of either
\begin{equation}
  \label{eq:5}
  (X^{24} - 16)^3 - X^{24} j = 0
  \text{ or }
  (X^{24} + 16)^3 - X^{24} j = 0,
\end{equation}
see, \emph{e.g.}, \cite{YuiZag97} or \cite[\S126, (1)]{Web08}.
The singular moduli of Weber modular functions provide generators of ring class fields of quadratic imaginary orders, see~\cite[\S126]{Web08} and \cite[\emph{Satz}~4.2]{Sch76}.
Additionally, the arithmetic complexity of these generators is sometimes significantly lower than that of the corresponding singular moduli of the $j$\nobreakdash-invariant, see~\cite{YuiZag97} and~\cite{EngSut10} for a computational perspective.
Since each of the functions~$\wf$, $\wf_1$ and~$\wf_2$ is a root of one of the polynomials in~\eqref{eq:5}, each of their singular moduli is an algebraic integer and a $\{ 2 \}$\nobreakdash-unit.
Moreover, as mentioned in Section~\ref{ss:units-are-rare}, the results of Weber imply that the singular moduli of~$\wf$, $\wf_1$ and~$\wf_2$ are often algebraic units, in contrast to the singular moduli of the $j$\nobreakdash-invariant.
Note that~$0$ is a singular modulus of the $j$\nobreakdash-invariant, but not of~$\wf$, $\wf_1$ or~$\wf_2$.
In fact, $\wf$, $\wf_1$ and~$\wf_2$ are all modular units, see, \emph{e.g.}, Corollary~\ref{c:elliptic-S-units}$(ii)$ in Section~\ref{ss:cuspidal-values}.

Theorem~\ref{t:S-units-genus-0} also applies to each of the six $\lambda$\nobreakdash-invariants, see Section~\ref{ss:units-are-rare} for the definition.
This solves affirmatively a conjecture of Habegger~\cite{Hab2102}.
Note that each of these functions is defined over~$\Q$, because it is a root of~\eqref{eq:4}.
As mentioned in Section~\ref{ss:units-are-rare}, the singular moduli of each of the six $\lambda$\nobreakdash-invariants are often algebraic units, in contrast to those of the $j$\nobreakdash-invariant.
Note that each of the $\lambda$\nobreakdash-invariants is a modular unit, see, \emph{e.g.}, Corollary~\ref{c:elliptic-S-units}$(ii)$ in Section~\ref{ss:cuspidal-values}.

The representation theory of the monster group provides a wealth of modular functions satisfying the hypotheses of Theorem~\ref{t:S-units-genus-0}.
In fact, by Borcherds' solution \cite[Theorem~1.1]{Bor92} of the monstrous moonshine conjecture of Conway and Norton \cite{ConNor79}, the McKay--Thompson series associated with a given element of the monster group is a \emph{Hauptmodul} defined over~$\Q$, see Proposition~\ref{p:arithmetically-modular} in Appendix~\ref{s:arithmetically-modular}.
Moreover, the singular moduli of a fundamental McKay--Thompson series are often algebraic integers \cite[Theorem~I]{CheYui96}.

For distinct singular moduli~$\sm$ and~$\sm'$ of the $j$\nobreakdash-invariant, Li gives in~\cite{Li21} an explicit lower bound for the absolute norm of~$\sm - \sm'$ that implies that this algebraic integer is not an algebraic unit.
When restricted to ${\sm' = 0}$, this is~\cite[Theorem~1.1]{BilHabKuh20}.
In fact, Li proves a stronger result for the values of modular polynomials at pairs of singular moduli of the $j$\nobreakdash-invariant.
Li's approach makes use of (extensions of) the work of Gross and Zagier in \cite{GroZag85}, and it is different from those in~\cite{Hab15,BilHabKuh20}.
Li does not treat the case of $S$\nobreakdash-units in~\cite{Li21}.

In the case where~$\sm_0 = 0$ (resp.~$1728$), the conclusion of the Main Theorem holds for certain classes of infinite sets of prime numbers~$S$.
In fact, if we put
\begin{multline*}
  S_0 \= \{ q \colon \text{ prime number, $q \equiv 1 \mod 3$} \}
  \\
  \text{ (resp. $\{ q \colon \text{ prime number, $q \equiv 1 \mod 4$} \}$),}
\end{multline*}
then the conclusion of the Main Theorem holds for every set of prime numbers~$S$ such that~$S \ssetminus S_0$ is finite and does not contain ${\{2, 3, 5 \}}$ (resp.~${\{2, 3, 7\}}$).
This is a direct consequence of the Main Theorem and the proof of \cite[Theorems~1.2 and~6.1]{Cam21}.

\subsection{Organization}
\label{ss:organization}

In Section~\ref{s:modular-functions} we establish general properties of modular functions (Section~\ref{ss:modular-functions}), their singular moduli (Section~\ref{ss:singular-moduli}) and their cuspidal and omitted values (Section~\ref{ss:cuspidal-values}).

In Section~\ref{s:nonatomic} we prove Theorem~\ref{t:disperse}.
We first establish it in the special case of the $j$\nobreakdash-invariant.
The main ingredient in the proof of this special case, is that no measure describing the $v$\nobreakdash-adic asymptotic distribution of the singular moduli of the $j$\nobreakdash-invariant has an atom in~$\Cv$.
This follows from \cite[\emph{Th{\'e}or{\`e}me}~2.4]{CloUll04} if~${v = \infty}$ and is stated as Theorem~\ref{t:CM-nonatomic} in the case where~$v$ is a prime number.
Together with~\cite[Theorems~A and~B]{HerMenRivII}, this implies Theorem~\ref{t:disperse} in the case of the $j$\nobreakdash-invariant as a direct consequence.
The proof of Theorem~\ref{t:CM-nonatomic} is based on the description of all these measures given in the companion papers~\cite{HerMenRiv20,HerMenRivII}.
We also use an analogous description for Hecke orbits given in \emph{loc.~cit.}
We first establish a result analogous to Theorem~\ref{t:CM-nonatomic} for Hecke orbits (Theorem~\ref{t:Hecke-orbits-nonatomic}) in Section~\ref{ss:Hecke-orbits-nonatomic}, and in Section~\ref{ss:CM-nonatomic} we deduce Theorem~\ref{t:CM-nonatomic} from this result.
To prove Theorem~\ref{t:Hecke-orbits-nonatomic}, we first show that the images of a point under Hecke correspondences associated with different prime numbers are nearly disjoint (Lemma~\ref{l:Hecke-independence}).
We use this to show that an atom in~$\Cp$ of an accumulation measure of a Hecke orbit would replicate indefinitely, thus creating infinite mass.\footnote{See Remark~\ref{r:homogeneous-limits} for a different strategy of proof.}
In Section~\ref{ss:proof-disperse} we deduce Theorem~\ref{t:disperse} in the general case from the special case of the $j$\nobreakdash-invariant, using the results about modular functions in Section~\ref{s:modular-functions}.

In Section~\ref{s:badly-approximable-Hauptmodul} we prove Theorem~\ref{t:badly-approximable-Hauptmodul}.
We first establish it in the special case of the $j$\nobreakdash-invariant, which is stated in a slightly different form as Proposition~\ref{p:badly-approximable-j}.
After a brief review of the work of Gross and Hopkins on deformation spaces of formal modules in Section~\ref{ss:deformations}, in Section~\ref{ss:proof-badly-approximable-j} we give the proof of Proposition~\ref{p:badly-approximable-j}.
First, we use that singular moduli are isolated in the ordinary reduction locus \cite[Corollary~B]{HerMenRiv20}, to restrict to the case where~$\sm$ and~$\sm_0$ are both in the supersingular reduction locus.
In the case where the conductors of~$D_{\sm}$ and~$D_{\sm_0}$ are both $p$\nobreakdash-adic units, we use an idea in the proof of \cite[Proposition~5.11]{Cha18}.
To extend this estimate to the general case, we use a formula in~\cite{HerMenRivII} that shows how the canonical branch of the Hecke correspondence~$T_p$ relates \CM{} points whose conductors differ by a power of~$p$.
In Section~\ref{ss:proof-badly-approximable-Hauptmodul} we deduce Theorem~\ref{t:badly-approximable-Hauptmodul} in the general case from the special case of the $j$\nobreakdash-invariant, using the results about modular functions in Section~\ref{s:modular-functions}.

In Section~\ref{s:proof-S-units} we prove Theorems~\ref{t:S-units-genus-0} and~\ref{t:units-are-rare}.
We follow Habegger's original strategy in the case of the $j$\nobreakdash-invariant and ${S = \emptyset}$ in~\cite{Hab15}, to prove a more general result that we state as Theorem~\ref{t:S-units-conditional}.
In particular, we use in a crucial way Colmez' bound~\cite[\emph{Th{\'e}or{\`e}me}~1]{Col98} in the form of~\cite[Lemma~3]{Hab15}.
The main new ingredient to implement Habegger's strategy is Theorem~\ref{t:disperse}.
Theorem~\ref{t:S-units-conditional} implies Theorem~\ref{t:units-are-rare} as a direct consequence.
Another direct consequence of Theorem~\ref{t:S-units-conditional} is that an affirmative solution to Conjecture~\ref{c:badly-approximable} would yield a version of Theorem~\ref{t:units-are-rare} for $S$\nobreakdash-units (Corollary~\ref{c:S-units-conditional-non-weak}) and a version of Theorem~\ref{t:S-units-genus-0} for a general congruence or genus zero group and a general algebraic value (Corollary~\ref{c:S-units-conditional-non-unit}).
We prove Theorem~\ref{t:S-units-conditional} in Section~\ref{ss:proof-S-units-conditional} and derive Theorem~\ref{t:S-units-genus-0} and Corollary~\ref{c:S-units-conditional-non-unit} from Theorem~\ref{t:S-units-conditional} in Section~\ref{ss:proof-S-units-genus-0}.

\subsection*{Acknowledgments}
The authors would like to thank Shouwu Zhang, for pointing out that a statement like the Main Theorem might be obtained from knowledge about the $p$\nobreakdash-adic asymptotic behavior of \CM{} points, and Philipp Habegger for sharing his conjecture on the $\lambda$\nobreakdash-invariants that prompted Theorem~\ref{t:S-units-genus-0}.
Finally, we thank the referee for insightful comments.

During the preparation of this work the first named author was partially supported by FONDECYT grant 11220567.
The second named author was partially supported by FONDECYT grant 1211858.
The third named author acknowledges partial support from NSF grant DMS-1700291.
The authors would like to thank the Pontificia Universidad Cat{\'o}lica de Valpara{\'{\i}}so, the University of Rochester and Universitat de Barcelona for hospitality during the preparation of this work.

\section{Modular functions and their special values}
\label{s:modular-functions}
In this section we prove general properties of modular functions, their singular moduli and their cuspidal and omitted values.
In Section~\ref{ss:modular-functions} we establish some general properties of modular functions (Proposition~\ref{p:modular-functions}).
In Section~\ref{ss:singular-moduli} we study arithmetic properties of singular moduli of modular functions defined over~$\Qalg$ (Proposition~\ref{p:singular-moduli}).
Finally, in Section~\ref{ss:cuspidal-values} we study cuspidal and omitted values of modular functions.

\subsection{Modular functions}
\label{ss:modular-functions}

The goal of this section is to prove the following proposition.

\begin{proposition}
  \label{p:modular-functions}
  Every modular function is algebraically dependent with the $j$\nobreakdash-invariant over~$\C$.
  Conversely, let~$K$ be a subfield of~$\C$ and let~$f$ be a nonconstant meromorphic function defined on~$\H$ that is algebraically dependent with the $j$\nobreakdash-invariant over~$K$.
  Then, $f$ is a modular function and there is a polynomial~$\Phi(X, Y)$ with coefficients in a finite extension of~$K$ inside~$\C$ that is irreducible over~$\C$ and such that~$\Phi(j, f)$ vanishes identically.
  Furthermore, $\Phi(X, Y)$ depends on both~$X$ and~$Y$, and it satisfies the following properties.
  \begin{enumerate}
  \item
    For every~$(z, w)$ in the zero set of~$\Phi$ in~$\C \times \C$, there is~$\tau$ in~$\H$ satisfying
    \begin{equation}
      \label{eq:6}
      z = j(\tau)
      \text{ and }
      w = f(\tau).
    \end{equation}
  \item
    Up to a constant factor, $\Phi$ is the unique irreducible polynomial in~$\C[X, Y]$ such that~$\Phi(j, f)$ vanishes identically.
  \end{enumerate}
\end{proposition}

In the proof of this proposition, which is below, and in the rest of the paper, we use the following property: For every subfield~$K$ of~$\C$, a polynomial in~$K[X, Y]$ is irreducible over~$\C$ if and only if it is irreducible over an algebraic closure of~$K$.

\begin{defi}
  \label{d:definition-field}
  Let~$K$ be a subfield of~$\C$.
  A modular function~$f$ is \emph{defined over~$K$}, if there is a polynomial~$\Phi(X, Y)$ in~$K[X, Y]$ that is irreducible over~$\C$ and such that~$\Phi(j, f)$ vanishes identically.
  In this case, $\Phi(X, Y)$ is \emph{a modular polynomial of~$f$}.
\end{defi}

In view of Proposition~\ref{p:modular-functions}, in the case where ${K = \Qalg}$ this definition coincides with the one given in Section~\ref{ss:S-units}.
Note that if~$K$ is a subfield of~$\C$, then every modular function having a modular polynomial in~$K[X, Y]$ is defined over~$K$.
On the other hand, by Proposition~\ref{p:modular-functions} for every modular function~$f$ there is a modular polynomial of~$j$ and~$f$ in~$\C[X, Y]$ and, if in addition~$f$ is algebraically dependent with the $j$\nobreakdash-invariant over~$K$, then there is a modular polynomial of~$j$ and~$f$ with coefficients in a finite extension of~$K$.
Furthermore, a modular polynomial in~$K[X, Y]$ of~$j$ and a modular function is unique up to a multiplicative constant in~$K^{\times}$.

\begin{proof}[Proof of Proposition~\ref{p:modular-functions}]
  Let~$f$ be a modular function.
  The case where~$f$ is constant being trivial, assume~$f$ is nonconstant.
  Let~$\Gamma$ be a subgroup of~$\SL(2, \R)$ that is commensurable to~$\SL(2, \Z)$ and such that~$f$ is invariant under~$\Gamma$.
  Replacing~$\Gamma$ by ${\Gamma \cap \SL(2, \Z)}$ if necessary, assume that~$\Gamma$ is a finite index subgroup of~$\SL(2, \Z)$.
  Then, the $j$\nobreakdash-invariant and~$f$ induce meromorphic functions~$j_0$ and~$f_0$ defined on~$X(\Gamma)$.
  Since the field of meromorphic functions defined on~$X(\Gamma)$ has transcendence degree one over~$\C$, there is a nonzero polynomial~$\Phi(X, Y)$ in~$\C[X, Y]$ such that~$\Phi(j_0, f_0)$ vanishes identically.
  It follows that the function~$\Phi(j, f)$ vanishes identically.
  This implies that the $j$\nobreakdash-invariant and~$f$ are algebraically dependent over~$\C$.

  To prove the second assertion, let~$K$ be a subfield of~$\C$ and let~$f$ be a nonconstant meromorphic function defined on~$\H$ that is algebraically dependent with the $j$\nobreakdash-invariant over~$K$.
  Then, there is a polynomial~$\Phi_0(X, Y)$ in~$K[X, Y]$ such that~$\Phi_0(j, f)$ vanishes identically.
  Suppose~$\Phi_0$ is not irreducible over~$\C$.
  Then, we can find a finite extension~$\hK$ of~$K$ inside~$\C$ and a finite family~$(\Phi_i)_{i \in I}$ of polynomials in~$\hK[X, Y]$ that are irreducible over~$\C$ and whose product is equal to~$\Phi_0$.
  It follows that at least one of the meromorphic functions in~$\{ \Phi_i(j, f) \colon i \in I \}$ vanishes identically.
  This proves that in all the cases there is a polynomial~$\Phi$ with coefficients in a finite extension of~$K$ that is irreducible over~$\C$ and such that~$\Phi(j, f)$ vanishes identically.
  Note also that, since the $j$\nobreakdash-invariant is nonconstant and~$f$ is nonconstant by assumption, the polynomial~$\Phi$ depends on both variables.

  To prove that~$f$ is a modular function, denote by~$\sM(\H)$ the field of all meromorphic functions defined on~$\H$.
  Note that the polynomial~$\Phi(j, Y)$ in~$\sM(\H)[Y]$ is nonconstant and denote by~$\sZ$ its finite number of zeros in~$\sM(\H)$.
  The set~$\sZ$ contains~$f$ and is invariant under the action of~$\SL(2, \Z)$ on~$\sM(\H)$ given by ${(\gamma, g) \mapsto g \circ \gamma}$.
  Since~$\sZ$ is finite, it follows that the stabilizer~$\Gamma$ of~$f$ in~$\SL(2, \R)$ is a finite index subgroup of~$\SL(2, \Z)$.
  Thus, to prove that~$f$ is a modular function, it is sufficient to prove that~$f$ is the lift of the restriction to ${\Gamma \backslash \H}$ of a meromorphic function defined on~$X(\Gamma)$.
  To do this, it is sufficient to show that for every~$\gamma$ in~$\SL(2, \Z)$ the function~$f \circ \gamma$ is meromorphic at~$i \infty$, see \emph{e.g.}, \cite[Proposition~1.30 and Section~1.4]{Shi71}.
  Note that the functions ${f \circ \gamma}$ and ${j \circ \gamma = j}$ are algebraically dependent over~$\C$.
  Thus, replacing~$f \circ \gamma$ by~$f$ if necessary, it is sufficient to prove that~$f$ is meromorphic at~$i \infty$.
  To do this, denote by~$\delta$ the degree of~$\Phi(X, Y)$ in~$Y$ and for each~$k$ in~$\{0, \ldots, \delta \}$ denote by~$P_k(X)$ the coefficient of~$Y^k$ in~$\Phi(X, Y)$ and by~$d_k$ the degree of~$P_k$.
  Then, there are constants ${R > 0}$ and~$C > 0$, such that for every~$z$ in~$\C$ satisfying ${|z| \ge R}$ we have~${|P_{\delta}(z)| \ge C^{-1} |z|^{d_{\delta}}}$, and such that every~$k$ in ${\{0, \ldots, \delta - 1 \}}$ we have ${|P_k(z)| \le C|z|^{d_k}}$.
  Thus, if we put
  \begin{equation}
    \label{eq:7}
    d
    \=
    - d_{\delta} + \max \{ d_1, \ldots, d_{\delta - 1} \},
  \end{equation}
  then for every~$\tau$ in~$\H$ satisfying
  \begin{equation}
    \label{eq:8}
    |j(\tau)| \ge R
    \text{ and }
    |f(\tau)| \ge 1,
  \end{equation}
  we have
  \begin{equation}
    \label{eq:9}
    |f(\tau)|^{\delta}
    =
    \left| P_{\delta} (j(\tau)) \right|^{-1} \left| \sum_{k = 0}^{\delta - 1} P_k (j(\tau)) f(\tau)^k \right|
    \le
    C^2 \delta |j(\tau)|^d |f(\tau)|^{\delta - 1}.
  \end{equation}
  Since the $j$\nobreakdash-invariant has a pole at~$i \infty$, see, \emph{e.g.}, \cite[Chapter~4, Section~1]{Lan87}, we conclude that for every~$\tau$ in~$\H$ whose imaginary part is sufficiently large we have ${|f(\tau)| \le C^2 \delta |j(\tau)|^d}$.
  This implies that~$f$ is meromorphic at~$i \infty$ and completes the proof that~$f$ is a modular function.

  To prove item~$(i)$, let~$j_0$ and~$f_0$ be as above, note that the set of poles of~$j_0$ is equal to the complement of~$\Gamma \backslash \H$ in~$X(\Gamma)$ and denote by~$P$ the set of poles of~$f_0$ in~$X(\Gamma)$.
  Moreover, denote by~$\hPhi$ the homogenization of~$\Phi$ in~$\C[X,Y,Z]$, so that ${\hPhi(X, Y, 1) = \Phi(X, Y)}$, and let~$Z(\hPhi)$ its zero set in~$\P^2(\C)$.
  Then,
  \begin{equation}
    \begin{array}{rcl}
      \label{eq:10}
      (\Gamma \backslash \H) \ssetminus P & \to & Z(\hPhi)
      \\
      x & \mapsto & \iota(x) \= [j_0(x) : f_0(x) : 1]
    \end{array}
  \end{equation}
  has a unique continuous extension ${\iota \colon X(\Gamma) \to Z(\hPhi)}$ and this map is surjective, see, \emph{e.g.}, \cite[Chapter~II, Proposition~6.8]{Har77}.
  Thus, for every~$(z, w)$ in~$\C \times \C$ in the zero set of~$\Phi$ there is~$x$ in ${(\Gamma \backslash \H) \ssetminus P}$ such that ${\psi(x) = [z : w : 1]}$.
  This implies item~$(i)$.

  To prove item~$(ii)$, let~$\widecheck{\Phi}(X, Y)$ be an irreducible polynomial in~$\C[X, Y]$ such that~$\widecheck{\Phi}(j, f)$ vanishes identically.
  Then, by item~$(i)$ the polynomial~$\widecheck{\Phi}$ vanishes on the zero set of~$\Phi$ and therefore~$\widecheck{\Phi}$ is a multiple of~$\Phi$.
  Since by assumption~$\widecheck{\Phi}$ is irreducible, it follows that it is a constant multiple of~$\Phi$.
  This completes the proof of item~$(ii)$ and of the proposition.
\end{proof}

\subsection{Singular moduli of modular functions}
\label{ss:singular-moduli}

The goal of this section is to establish some arithmetic properties of singular moduli of modular functions defined over~$\Qalg$.
These are gathered in Proposition~\ref{p:singular-moduli} below.
To state it, we introduce some terminology.

For a finite extension~$K$ of~$\Q$ inside~$\Qalg$, consider the Galois group~$\Gal(\Qalg|K)$ and for each~$\alpha$ in~$\C$ denote by~$\oO_K(\alpha)$ its orbit by~$\Gal(\Qalg|K)$.
Note that, with the notation introduced in Section~\ref{ss:disperse} we have ${\oO(\alpha) = \oO_{\Q}(\alpha)}$.

In this paper, a \emph{discriminant} is the discriminant of an order in a quadratic imaginary extension of~$\Q$.
For every discriminant~$D$ denote by~$h(D)$ the class number of the order of discriminant~$D$ in~$\Q(\sqrt{D})$.
For a singular modulus~$\sm$, the discriminant of the endomorphism ring of an elliptic curve over~$\C$ whose $j$\nobreakdash-invariant is equal to~$\sm$ only depends on~$\sm$.
Denote it by~$D_{\sm}$.
We use that for every singular modulus~$\sm$ of the $j$\nobreakdash-invariant, we have
\begin{align}
  \label{eq:11}
  \oO_{\Q}(\sm)
  & =
    \{ \text{singular modulus~$\sm'$ of the $j$\nobreakdash-invariant with ${D_{\sm'} = D_{\sm}}$} \}
    \intertext{ and }
    \label{eq:12}
    \# \oO_{\Q}(\sm)
  & =
    h(D_{\sm}),
\end{align}
see, \emph{e.g.}, \cite[Chapter~10, Theorem~5]{Lan87}.

\begin{proposition}
  \label{p:singular-moduli}
  Let~$K$ be a finite extension of~$\Q$ inside~$\C$.
  Then, for every nonconstant modular function~$f$ defined over~$K$ the following properties hold.
  \begin{enumerate}
  \item
    Every singular modulus~$\fsm_0$ of~$f$ is in~$\Qalg$ and every element of~$\oO_K(\fsm_0)$ is also a singular modulus of~$f$.
  \item
    There is a constant ${C_0 > 0}$ such that for every quadratic imaginary number~$\tau_0$ in~$\H$ that is not a pole of~$f$, the singular modulus~$f(\tau_0)$ of~$f$ satisfies
    \begin{equation}
      \label{eq:13}
      C_0^{-1} \cdot \# \oO_{\Q}(j(\tau_0))
      \le
      \# \oO_{K}(f(\tau_0))
      \le
      \# \oO_{\Q}(f(\tau_0))
      \le
      C_0 \cdot \# \oO_{\Q}(j(\tau_0)).
    \end{equation}
  \item
    For every ${\varepsilon > 0}$ there is a constant ${C_1 > 0}$ such that the following property holds.
    For every quadratic imaginary number~$\tau$ in~$\H$ that is not a pole of~$f$, we have
    \begin{equation}
      \label{eq:14}
      C_1^{-1} \cdot \# \oO_K(f(\tau))^{2 - \varepsilon}
      \le
      |D_{j(\tau)}|
      \le
      C_1 \cdot \# \oO_{K}(f(\tau))^{2 + \varepsilon}.
    \end{equation}
  \item
    For every ${\varepsilon > 0}$ there is a constant ${C_2 > 0}$, such that for every~$R > 0$ we have
    \begin{equation}
      \label{eq:15}
      \# \{ \text{singular modulus~$\fsm$ of~$f$ such that $\#\oO_{K}(\fsm) \le R$} \}
      \le
      C_2 R^{3 + \varepsilon}.
    \end{equation}
  \end{enumerate}
\end{proposition}

\begin{proof}
  Let~$\Phi(X, Y)$ be a modular polynomial of~$f$ in~$K[X, Y]$.
  Denote by~$\delta_X$ (resp.~$\delta_Y$) the degree of~$\Phi(X, Y)$ in~$X$ (resp.~$Y$).

  To prove items~$(i)$ and~$(ii)$, let~$\fsm_0$ be a singular modulus of~$f$ and let~$\tau_0$ be a quadratic imaginary number in~$\H$ such that ${\fsm_0 = f(\tau_0)}$.
  Then, ${\sm_0 \= j(\tau_0)}$ is a singular modulus of the $j$\nobreakdash-invariant and therefore it is in~$\Qalg$.
  On the other hand, the polynomial~$\Phi(\sm_0, Y)$ is nonzero because~$\Phi$ is irreducible over~$\C$ and~$j$ is nonconstant.
  Since~$\fsm_0$ is a root of~$\Phi(\sm_0, Y)$, it is in an extension of~$K(\sm_0)$ of degree at most~$\delta_Y$.
  In particular, $\fsm_0$ is in~$\Qalg$.
  To complete the proof of item~$(i)$, let~$\sigma$ in~$\Gal(\Qalg|K)$ be given, and note that ${\Phi(\sigma(\sm_0), \sigma(\fsm_0)) = 0}$.
  Since~$\Phi$ is irreducible over~$\C$, by Proposition~\ref{p:modular-functions} there is~$\tau$ in~$\H$ such that
  \begin{equation}
    \label{eq:16}
    \sigma(\sm_0) = j(\tau)
    \text{ and }
    \sigma(\fsm_0) = f(\tau).
  \end{equation}
  By~\eqref{eq:11}, the number~$\sigma(\sm_0)$ is a singular modulus of the $j$\nobreakdash-invariant and therefore~$\tau$ is a quadratic imaginary number.
  It follows that~$\sigma(\fsm_0)$ is a singular modulus of~$f$.
  This completes the proof of item~$(i)$.
  To prove item~$(ii)$, note that by~\eqref{eq:12} and the fact that~$\fsm_0$ is in an extension of~$K(\sm_0)$ of degree at most~$\delta_Y$ we have
  \begin{equation}
    \label{eq:17}
    \# \oO_{\Q}(\fsm_0)
    \le
    \delta_Y [K(\sm_0) : \Q]
    \le
    \delta_Y [K : \Q] \cdot \# \oO_{\Q}(\sm_0).
  \end{equation}
  On the other hand, the polynomial~$\Phi(X, \fsm_0)$ is nonzero because~$\Phi$ is irreducible over~$\C$ and~$f$ is nonconstant.
  Since~$\sm_0$ is a root of~$\Phi(X, \fsm_0)$, it is in an extension of~$K(\fsm_0)$ of degree at most~$\delta_X$, and we have
  \begin{equation}
    \label{eq:18}
    \# \oO_{\Q}(\sm_0)
    \le
    \delta_X [K(\fsm_0) : \Q]
    =
    \delta_X [K : \Q] \cdot \# \oO_{K}(\fsm_0).
  \end{equation}
  This completes the proof of item~$(ii)$ with ${C_0 = [K : \Q] \max \{ \delta_X, \delta_Y \}}$.

  Item~$(iii)$ is a direct consequence of \eqref{eq:11}, \eqref{eq:12}, item~$(ii)$ and of the following estimate: For every~$\varepsilon > 0$ there is~$C > 0$ such that for every discriminant~$D$, we have
  \begin{equation}
    \label{eq:19}
    C^{-1} |D|^{\frac{1}{2} - \varepsilon}
    \le
    h(D)
    \le
    C |D|^{\frac{1}{2} + \varepsilon}.
  \end{equation}
  In the case where~$D$ is fundamental this is Siegel's estimate \cite[(1)]{Sie35}.
  To deduce the general case from the fundamental case, see, \emph{e.g.}, \cite[Chapter~8, Section~1, Theorem~7]{Lan87} or \cite[(5.12) and Lemma~5.12]{HerMenRivII}.

  To prove item~$(iv)$, let~$C_0$ and~$C_1$ be the constants given by items~$(ii)$ and~$(iii)$, respectively.
  Moreover, for each singular modulus~$\fsm$ of~$f$ choose a quadratic imaginary number~$\tau$ in~$\H$ such that ${f(\tau) = \fsm}$, and put ${\sm(\fsm) \= j(\tau)}$.
  For every singular modulus~$\sm$ of the $j$\nobreakdash-invariant there are at most~$\delta_Y$ singular moduli~$\fsm$ of~$f$ such that ${\sm(\fsm) = \sm}$.
  Thus, by items~$(ii)$ and~$(iii)$ the left side of~\eqref{eq:15} is bounded from above by
  \begin{multline}
    \label{eq:20}
    \delta_Y \cdot \# \left\{ \text{singular modulus~$\sm$ of the $j$\nobreakdash-invariant such that}
    \right. \\ \left. \text{
        ${\# \oO_{\Q}(\sm) \le C_0 R}$ and ${|D_{\sm}| \le C_1 R^{2 + \varepsilon}}$} \right\}
    \le
    \delta_Y C_0 C_1 R^{3 + \varepsilon}.
  \end{multline}
  This proves item~$(iv)$, and completes the proof of the proposition.
\end{proof}

\subsection{Cuspidal and omitted values of modular functions}
\label{ss:cuspidal-values}

Let~$f$ be a nonconstant modular function.
A complex number~$\alpha$ is a \emph{value of~$f$} if there is~$\tau$ in~$\H$ such that ${f(\tau) = \alpha}$, and it is an \emph{omitted value of~$f$} if it is not a value of~$f$.

For a modular function~$f$, the following proposition gives a characterization of the cuspidal and omitted values of~$f$.
It shows in particular that every omitted value is cuspidal, see also Remark~\ref{r:cuspidal-vs-omitted}.
Moreover, in Proposition~\ref{p:cuspidal-are-approximated} below we show that in the case where~$f$ is defined over~$\Qalg$, every cuspidal value of~$f$ is well approximated in~$\C$ by the singular moduli of~$f$ and that for every prime number~$p$, every omitted value of~$f$ is badly approximable in~$\Cp$ by the singular moduli of~$f$.

\begin{proposition}
  \label{p:omitted-and-cuspidal}
  Let~$f$ be a nonconstant modular function and let~$\Phi(X, Y)$ be a modular polynomial of~$f$.
  Then, for every complex number~$\alpha$ the polynomial~$\Phi(X, \alpha)$ is nonzero and the following properties hold.
  \begin{enumerate}
  \item
    The number~$\alpha$ is an omitted value of~$f$ if and only if the polynomial~$\Phi(X, \alpha)$ is constant.
  \item
    The number~$\alpha$ is a cuspidal value of~$f$ if and only if the degree of the polynomial~$\Phi(X, \alpha)$ is strictly smaller than the degree of~$\Phi(X, Y)$ in~$X$.
  \end{enumerate}
  In particular, every omitted value is cuspidal.
  Furthermore, if~$f$ is defined over a subfield~$K$ of~$\C$, then every cuspidal value of~$f$ is in the algebraic closure of~$K$ inside~$\C$.
\end{proposition}

The following corollary is an immediate consequence of this proposition.
Note that a modular function~$f$ is holomorphic if and only if~$0$ is an omitted value of~$\frac{1}{f}$, and~$f$ is a modular unit if and only if~$0$ is an omitted value of~$f$ and of~$\frac{1}{f}$.

\begin{coro}
  \label{c:elliptic-S-units}
  Let~$f$ and~$\Phi$ be as in Proposition~\ref{p:omitted-and-cuspidal}.
  If we consider~$\Phi(X, Y)$ as a polynomial in~$Y$ with coefficients in~$\C[X]$, then the following properties hold.
  \begin{enumerate}
  \item
    The modular function~$f$ is holomorphic if and only if the leading coefficient of~$\Phi(X, Y)$ does not depend on~$X$.
    In particular, for every holomorphic modular function~$f$ defined over~$\Qalg$, there is a finite set of prime numbers~$S$ such that every singular modulus of~$f$ is an $S$\nobreakdash-integer.
  \item
    The modular function~$f$ is a modular unit if and only if neither the constant nor the leading coefficients of~$\Phi(X, Y)$ depend on~$X$.
    In particular, for every modular unit~$f$ defined over~$\Qalg$, there is a finite set of prime numbers~$S$ such that every singular modulus of~$f$ is an $S$\nobreakdash-unit.
  \end{enumerate}
\end{coro}

\begin{proof}[Proof of Proposition~\ref{p:omitted-and-cuspidal}]
  If the polynomial~$\Phi(X, \alpha)$ were zero, then~$\Phi(X, Y)$ would be divisible by ${Y - \alpha}$.
  This is impossible since~$\Phi(X, Y)$ is irreducible in~$\C[X, Y]$ and it depends on both variables (Proposition~\ref{p:modular-functions}).
  This proves that~$\Phi(X, \alpha)$ is nonzero.

  To prove item~$(i)$, let~$\alpha$ be a complex number such that~$\Phi(X, \alpha)$ is nonconstant and let~$\beta$ be a root of this polynomial.
  Then, by Proposition~\ref{p:modular-functions}$(i)$ there is~$\tau$ in~$\H$ such that ${j(\tau) = \beta}$ and ${f(\tau) = \alpha}$.
  In particular, $\alpha$ is a value of~$f$ and therefore it is not an omitted value of~$f$.
  To prove the reverse implication, let~$\tau$ in~$\H$ be such that~$f(\tau)$ is finite.
  Then, the number~$j(\tau)$ is a zero of the polynomial~$\Phi(X, f(\tau))$.
  Since~$\Phi(X, f(\tau))$ is nonzero, it follows that it is nonconstant.
  This completes the proof of item~$(i)$.

  To prove item~$(ii)$, denote by~$d$ the degree of~$\Phi(X, Y)$ in~$X$ and let~$P(Y)$ be the coefficient of~$X^d$ in~$\Phi(X, Y)$, seen as a polynomial in~$X$ with coefficients in~$\C[Y]$.
  Furthermore, put
  \begin{equation}
    \label{eq:21}
    \Delta(X, Y)
    \=
    P(Y) X^d - \Phi(X, Y)
  \end{equation}
  and note that the degree in~$X$ of this polynomial is strictly less than~$d$.
  Let~$\Gamma$ be the stabilizer of~$f$ in~$\SL(2, \R)$ and denote by~$f_0$ the meromorphic function defined on~$X(\Gamma)$ induced by~$f$.
  Suppose that~$\alpha$ is a cuspidal value of~$f$.
  That is, $\alpha$ is a value that~$f_0$ takes at a point in ${\Gamma \backslash \P^1(\Q)}$.
  Since~$\SL(2, \Z)$ acts transitively on~$\P^1(\Q)$, there is~$\gamma$ in~$\SL(2, \Z)$ such that ${f \circ \gamma(\tau) \to \alpha}$ as ${\Im(\tau) \to \infty}$, see, \emph{e.g.} \cite[Proposition~1.30]{Shi71}.
  Combined with~\eqref{eq:21}, this implies
  \begin{equation}
    \label{eq:22}
    |P(\alpha)|
    =
    \lim_{\Im(\tau) \to \infty} |P((f \circ \gamma)(\tau))|
    =
    \lim_{\Im(\tau) \to \infty} \frac{|\Delta(j(\tau), (f \circ \gamma)(\tau))|}{|j(\tau)|^d}
    =
    0.
  \end{equation}
  This proves ${P(\alpha) = 0}$ and therefore that the degree of~$\Phi(X, \alpha)$ is strictly less than~$d$.
  To prove the reverse implication, suppose that~$\alpha$ is a non-cuspidal value of~$f$ and let~$A$ be a finite subset of~$\H$ such that ${f_0^{-1}(\alpha) = \Gamma \backslash (\Gamma \cdot A)}$.
  Let ${r > 0}$ be sufficiently small so that there is a compact neighborhood~$N$ of~$A$ in~$\H$ such that
  \begin{equation}
    \label{eq:23}
    f_0^{-1} \left(\overline{\bfD_{\infty}(\alpha, r)} \right) = \Gamma \backslash (\Gamma \cdot N).
  \end{equation}
  Reducing~$r$ if necessary, suppose that for every~$\alpha'$ in ${B(\alpha, r) \ssetminus \{ \alpha \}}$ we have ${P(\alpha') \neq 0}$ and let~$(\alpha_i)_{i = 1}^{\infty}$ be a sequence in ${B(\alpha, r) \ssetminus \{ \alpha \}}$ converging to~$\alpha$.
  Then, for every~$i$ the polynomial~$\Phi(X, \alpha_i)$ is of degree~$d$ and therefore by Proposition~\ref{p:modular-functions}$(i)$ there are~$\tau_i^{(1)}$, \ldots, $\tau_i^{(d)}$ in~$N$ such that
  \begin{equation}
    \label{eq:24}
    \Phi(X, \alpha_i)
    =
    P(\alpha_i) \prod_{\ell = 1}^d (X - j(\tau_i^{(\ell)})).
  \end{equation}
  Taking a subsequence if necessary, suppose that for every~$\ell$ in ${\{ 1, \ldots, d \}}$ the sequence~$(\tau_i^{(\ell)})_{i = 1}^{\infty}$ converges to an element~$\tau_i$ of~$N$.
  Letting ${i \to \infty}$ in~\eqref{eq:24}, we obtain
  \begin{equation}
    \label{eq:25}
    \Phi(X, \alpha)
    =
    P(\alpha) \prod_{\ell = 1}^d (X - j(\tau^{(\ell)})).
  \end{equation}
  Since~$\Phi(X, \alpha)$ is nonzero, it follows that~$P(\alpha)$ is nonzero and therefore that the degree of~$\Phi(X, \alpha)$ is~$d$.
  This completes the proof of item~$(ii)$.

  To prove the remaining assertions, note that by combining items~$(i)$ and~$(ii)$ we obtain that every omitted value is cuspidal.
  On the other hand, by item~$(ii)$ the cuspidal values of~$f$ are precisely the zeros of~$P(Y)$.
  In particular, there are at most finitely many cuspidal values of~$f$.
  If~$f$ is defined over a subfield~$K$ of~$\C$, then we can assume that the polynomial~$\Phi(X, Y)$ is in~$K[X, Y]$.
  This implies that~$P(Y)$ is in~$K[Y]$ and therefore that all of the cuspidal values of~$f$ are in the algebraic closure of~$K$ inside~$\C$.
\end{proof}

\begin{remark}
  \label{r:cuspidal-vs-omitted}
  The modular function
  \begin{equation}
    \label{eq:26}
    g
    \=
    \frac{j}{j^2 - 1}
  \end{equation}
  provides an example of a cuspidal value that is not omitted.
  In fact, this function is invariant under~$\SL(2, \Z)$ and the meromorphic function~$g_0$ on~$X(\SL(2, \Z))$ induced by~$g$ vanishes at the cusp~$i \infty$.
  But~$0$ is not an omitted value of~$g$, because ${g \left( \frac{1 + \sqrt{3}i}{2} \right) = 0}$.
\end{remark}

\begin{proposition}
  \label{p:cuspidal-are-approximated}
  For every nonconstant modular function~$f$ defined over~$\Qalg$, the following properties hold.
  \begin{enumerate}
  \item
    Every cuspidal value of~$f$ is well approximated in~$\C$ by the singular moduli of~$f$.
    In particular, every omitted value of~$f$ is well approximated in~$\C$ by the singular moduli of~$f$.
  \item
    Let~$p$ be a prime number and let~$\alpha$ be an omitted value of~$f$.
    Then, there is ${r > 0}$ such that~$\bfD_p(\alpha, r)$ contains no singular modulus of~$f$.
    In particular, $\alpha$ is badly approximable in~$\Cp$ by the singular moduli of~$f$.
  \end{enumerate}
\end{proposition}

The proof of this proposition is after the following lemma.

\begin{lemma}
  \label{l:reduction-to-j}
  Let~$v$ be in~$\VQ$ and let~$\Phi(X, Y)$ be an irreducible polynomial in~$\Cv[X, Y]$ depending on both variables.
  Then, for every~$\alpha$ in~$\Cv$ there are constants
  \begin{equation*}
    C_3 > 1,
    \theta > 0,
    \eta > 0
    \text{ and }
    \eta' > 0
  \end{equation*}
  such that for every~$z$ in~$\Cv$ and every~$w$ in ${\Cv \ssetminus \{ \alpha \}}$ sufficiently close to~$\alpha$ and such that ${\Phi(z, w) = 0}$, exactly one of the following properties holds.
  \begin{enumerate}
  \item
    The polynomial~$\Phi(X, \alpha)$ is nonconstant and, denoting by~$Z$ its finite set of zeros in~$\Cv$, we have
    \begin{equation}
      \label{eq:27}
      \min \{ |z - z_0|_v \colon z_0 \in Z \}
      <
      C_3 |w - \alpha|_v^{\theta}.
    \end{equation}
  \item
    The degree of~$\Phi(X, \alpha)$ is strictly smaller than that of~$\Phi(X, Y)$ in~$X$ and we have
    \begin{equation}
      \label{eq:28}
      C_3^{-1} |w - \alpha|_v^{- \eta}
      <
      |z|_v
      <
      C_3 |w - \alpha|_v^{- \eta'}.
    \end{equation}
  \end{enumerate}
\end{lemma}

\begin{proof}
  Put ${Q_0(X) \= \Phi(X, \alpha)}$ and note that our hypotheses that~$\Phi(X, Y)$ is irreducible in~$\Cv[X, Y]$ and that it depends on both variables, implies that~$Q_0(X)$ is nonzero.
  Denote by~$\ell_0$ the degree of~$Q_0(X)$ and let ${R_0 > 1}$ and~$M_0$ in~$]0, 1[$ be constants so that for every~$z$ in~$\Cv$ satisfying ${|z|_v \ge R_0}$, we have
  \begin{equation}
    \label{eq:29}
    M_0 |z|_v^{\ell_0}
    \le
    |Q_0(z)|_v
    \le
    M_0^{-1} |z|_v^{\ell_0}.
  \end{equation}
  Reducing~$M_0$ if necessary, suppose that in the case where~$Q_0(X)$ is constant we have
  \begin{equation}
    \label{eq:30}
    |Q_0(0)|_v
    \ge
    M_0,
  \end{equation}
  and that in the case where~$Q_0(X)$ is nonconstant for every~$z$ in~$\Cv$ we have
  \begin{equation}
    \label{eq:31}
    |Q_0(z)|_v
    \ge
    M_0 \min \{ |z - z_0|_v \colon z_0 \in Z \}^{\ell_0}.
  \end{equation}

  Note that ${Y - \alpha}$ divides ${\Phi(X, Y) - Q_0(X)}$.
  Let~$m_0$ in~$\N$ be the largest integer such that~$(Y - \alpha)^{m_0}$ divides ${\Phi(X, Y) - Q_0(X)}$, and let~$\Psi(X, Y)$ be the polynomial in~$\Cv[X, Y]$ such that
  \begin{equation}
    \label{eq:32}
    \Phi(X, Y) - Q_0(X)
    =
    (Y - \alpha)^{m_0} \Psi(X, Y).
  \end{equation}
  Denote by~$\delta$ the degree of~$\Psi(X, Y)$ in~$X$.
  Regarding~$\Psi(X, Y)$ as a polynomial in~$X$ with coefficients in~$\Cv[Y]$, for each~$i$ in~$\{0, \ldots, \delta \}$ let~$P_i(Y)$ be the coefficient of~$X^i$ in~$\Psi(X, Y)$.
  Furthermore, denote by~$m_1$ the order of~$P_\delta(Y)$ at~$\alpha$.
  Then, there is a constant~$M_1 > 1$ such that for every~$w$ in ${\Cv \ssetminus \{ \alpha \}}$ that is sufficiently close to~$\alpha$, we have
  \begin{equation}
    \label{eq:33}
    |P_{\delta}(w)|_v > M_1^{-1} |w - \alpha|_v^{m_1},
  \end{equation}
  and such that for every~$i$ in~$\{0, \ldots, \delta \}$ we have ${|P_i(w)|_v \le M_1}$.
  Thus, for every~$z$ in~$\Cv$ such that ${\Phi(z, w) = 0}$, we have
  \begin{equation}
    \label{eq:34}
    |\Psi(z, w)|_v
    \le
    (\delta + 1) M_1 \max \{ 1, |z|_v \}^{\delta}
  \end{equation}
  and
  \begin{equation}
    \label{eq:35}
    |\Psi(z, w) - P_{\delta}(w) z^{\delta}|_v
    \le
    \delta M_1 \max \{ 1, |z|_v \}^{\delta - 1}.
  \end{equation}

  To prove the desired assertion, put
  \begin{equation}
    \label{eq:36}
    M_2
    \=
    \frac{M_0}{(\delta + 1) M_1},
  \end{equation}
  and let~$w$ in ${\Cv \ssetminus \{ \alpha \}}$ be sufficiently close to~$\alpha$ so that~\eqref{eq:33}, \eqref{eq:34} and~\eqref{eq:35} hold and so that
  \begin{equation}
    \label{eq:37}
    |w - \alpha|_v^{m_0}
    <
    M_2 R_0^{-\delta}.
  \end{equation}
  Furthermore, let~$z$ in~$\Cv$ be such that ${\Phi(z, w) = 0}$.

  \partn{Case~1} ${|z|_v < R_0}$.
  If~$Q_0(X)$ were constant, then by~\eqref{eq:30}, \eqref{eq:32} and~\eqref{eq:34} we would have
  \begin{equation}
    \label{eq:38}
    |w - \alpha|_v^{m_0}
    =
    \left| \frac{Q_0(z)}{\Psi(z, w)} \right|_v
    >
    M_2 R_0^{-\delta},
  \end{equation}
  which contradicts~\eqref{eq:37} and proves that~$Q_0(X)$ is nonconstant.
  Denoting by~$Z$ the nonempty set of zeros of~$Q_0(X)$ in~$\Cv$, by~\eqref{eq:31}, \eqref{eq:32} and~\eqref{eq:34} we have
  \begin{equation}
    \label{eq:39}
    |w - \alpha|_v^{m_0}
    =
    \left| \frac{Q_0(z)}{\Psi(z, w)} \right|_v
    >
    M_2 R_0^{- \delta} \min \{ |z - z_0|_v \colon z_0 \in Z \}^{\ell_0}.
  \end{equation}
  This proves~\eqref{eq:27} with ${C_3 = M_2^{- \frac{1}{\ell_0}} R_0^{\frac{\delta}{\ell_0}}}$ and ${\theta = \frac{m_0}{\ell_0}}$ and completes the proof that property~$(i)$ holds.

  \partn{Case~2} ${|z|_v \ge R_0}$.
  By~\eqref{eq:29}, \eqref{eq:32} and~\eqref{eq:34}, in this case we have
  \begin{multline}
    \label{eq:40}
    M_0 |z|_v^{\ell_0} \cdot |w - \alpha|_v^{- m_0}
    \le
    |Q_0(z)| \cdot |w - \alpha|_v^{- m_0}
    =
    |\Psi(z, w)|_v
    \\ \le
    (\delta + 1) M_1 |z|_v^{\delta}.
  \end{multline}
  If we had ${\ell_0 \ge \delta}$, then we would obtain ${|w - \alpha|_v^{m_0} \ge M_2}$.
  This contradicts~\eqref{eq:37} and proves that the degree~$\ell_0$ of~$Q_0(X)$ is strictly less than the degree~$\delta$ of~$\Phi(X, Y)$ in~$X$.
  Together with~\eqref{eq:40}, this implies the first inequality in~\eqref{eq:28} with ${C_3 = M_2^{\frac{1}{\delta - \ell_0}}}$ and ${\eta = \frac{m_0}{\delta - \ell_0}}$.
  To prove the second inequality in~\eqref{eq:28}, suppose
  \begin{equation}
    \label{eq:41}
    |z|_v
    \ge
    2 \delta M_1^2 |w - \alpha|_v^{- m_1}.
  \end{equation}
  Then, by~\eqref{eq:33} and~\eqref{eq:35} we have
  \begin{multline}
    \label{eq:42}
    |P_{\delta}(w) z^{\delta}|_v
    \ge
    2 \delta M_1^2 |w - \alpha|_v^{- m_1} |P_{\delta}(w) z^{\delta - 1}|_v
    >
    2 \delta M_1 |z|_v^{\delta - 1}
    \\ \ge
    2 |\Psi(z, w) - P_{\delta}(w) z^{\delta}|_v.
  \end{multline}
  Together with the triangle inequality, \eqref{eq:29}, \eqref{eq:32} and~\eqref{eq:33}, this implies
  \begin{multline}
    \label{eq:43}
    M_0^{-1} |z|_v^{\ell_0} \cdot |w - \alpha|_v^{- m_0}
    \ge
    |Q_0(z)| \cdot |w - \alpha|_v^{- m_0}
    =
    |\Psi(z, w)|_v
    >
    \frac{1}{2} |P_\delta(w) z^{\delta}|_v
    \\ \ge
    \frac{1}{2} M_1^{-1} |w - \alpha|_v^{m_1} |z|_v^{\delta}.
  \end{multline}
  Rearranging, we obtain the second inequality in~\eqref{eq:28} with
  \begin{equation}
    \label{eq:44}
    C_3
    =
    \max \left\{ 2\delta M_1^2, (2 M_0^{-1} M_1)^{\frac{1}{\delta - \ell_0}} \right\}
    \text{ and }
    \eta'
    =
    \max \left\{ m_1, \frac{m_0 + m_1}{\delta - \ell_0} \right\}.
  \end{equation}
  This completes the proof that property~$(ii)$ holds.

  Finally, note that for~$w$ in ${\Cv \ssetminus \{ \alpha \}}$ that is sufficiently close to~$\alpha$, the inequality~\eqref{eq:27} and the first inequality in~\eqref{eq:28} cannot hold at the same time.
  This proves that properties~$(i)$ and~$(ii)$ cannot hold simultaneously and completes the proof of the lemma.
\end{proof}

\begin{proof}[Proof of Proposition~\ref{p:cuspidal-are-approximated}]
  Let~$\Phi(X, Y)$ be a modular polynomial of~$f$ in~$\Qalg[X, Y]$ and note that for every~$v$ in~$\VQ$ the polynomial~$\Phi(X, Y)$ is irreducible in~$\C_v[X, Y]$.

  To prove item~$(i)$, let~$\alpha$ be a cuspidal value of~$f$ and let~$C_3$ and~$\eta'$ be the constants given by Lemma~\ref{l:reduction-to-j} with ${v = \infty}$.
  That is, if we denote by~$\Gamma$ the stabilizer of~$f$ in~$\SL(2, \R)$, then~$\alpha$ is a value that~$f$ takes at a point in ${\Gamma \backslash \P^1(\Q)}$.
  Since~$\SL(2, \Z)$ acts transitively on~$\P^1(\Q)$, there is~$\gamma$ in~$\SL(2, \Z)$ such that ${f \circ \gamma(\tau) \to \alpha}$ as ${\Im(\tau) \to \infty}$, see, \emph{e.g.} \cite[Proposition~1.30]{Shi71}.
  Let~$C > 0$ be a constant such that for every~$\tau$ in~$\H$ such that~$\Im(\tau)$ is sufficiently large, we have
  \begin{equation}
    \label{eq:45}
    |j(\tau)|
    \ge
    C \exp(2 \pi \Im(\tau)),
  \end{equation}
  see, \emph{e.g.}, \cite[Chapter~4, Section~1]{Lan87}.
  Given a prime number~$p'$ satisfying ${p' \equiv 1 \mod 4}$, put
  \begin{equation}
    \label{eq:46}
    \tau_{p'}
    \=
    i \sqrt{p'},
    \sm(p')
    \= j(\tau_{p'})
    \text{ and }
    \fsm(p')
    \= f \circ \gamma (\tau_{p'}),
  \end{equation}
  and note that~$\sm(p')$ is a singular modulus of the $j$\nobreakdash-invariant satisfying ${D_{\sm(p')} = - 4p'}$ and that~$\fsm(p')$ is a singular modulus of~$f$.
  If~$p'$ is sufficiently large, then by~\eqref{eq:45} with ${\tau = \tau_{p'}}$ property~$(i)$ in Lemma~\ref{l:reduction-to-j} cannot be satisfied with ${z = \sm(p')}$ and ${w = \fsm(p')}$.
  So, property~$(ii)$ holds and we have
  \begin{equation}
    \label{eq:47}
    - \log |\fsm(p') - \alpha|
    >
    \frac{1}{\eta'} \log |\sm(p')| - \frac{1}{\eta'} \log C_3
    \ge
    \frac{\pi}{\eta'} \sqrt{|D_{\sm(p')}|} - \frac{1}{\eta'} \log \frac{C_3}{C}.
  \end{equation}
  In view of Proposition~\ref{p:singular-moduli}$(iii)$, this implies that~$\alpha$ is well approximated in~$\C$ by the singular moduli of~$f$.
  The second assertion of item~$(i)$ follows from the first and from the fact that every omitted value is cuspidal (Proposition~\ref{p:omitted-and-cuspidal}).

  To prove item~$(ii)$, let~$C_3$ and~$\eta$ be the constants given by Lemma~\ref{l:reduction-to-j} with ${v = p}$ and put ${r \= C_3^{- \frac{1}{\eta}}}$.
  By Proposition~\ref{p:omitted-and-cuspidal}$(ii)$, our hypothesis that~$\alpha$ is an omitted value of~$f$ implies that the polynomial~$\Phi(X, \alpha)$ is constant.
  Thus, if there were a quadratic imaginary number~$\tau$ in~$\H$ such that~$f(\tau)$ is sufficiently close to~$\alpha$ in~$\Cp$, then~$f(\tau)$ would be in~$\bfD_p(\alpha, r)$ and by Lemma~\ref{l:reduction-to-j} the singular modulus~$j(\tau)$ of the $j$\nobreakdash-invariant would satisfy
  \begin{equation}
    \label{eq:48}
    |j(\tau)|_p
    >
    C_3^{-1} |f(\tau) - \alpha|_p^{-\eta}
    >
    1.
  \end{equation}
  This is absurd, since~$j(\tau)$ is an algebraic integer.
  This completes the proof of item~$(ii)$ and of the proposition.
\end{proof}

\section{$p$-Adic limits of \CM{} points}
\label{s:nonatomic}

The goal of this section is to prove Theorem~\ref{t:disperse}.
The main ingredient is Theorem~\ref{t:CM-nonatomic} below.
Together with \cite[Theorems~A and~B]{HerMenRivII}, which are summarized in Theorem~\ref{t:CM} in Section~\ref{ss:CM-nonatomic}, Theorem~\ref{t:CM-nonatomic} implies Theorem~\ref{t:disperse} in the case of the $j$\nobreakdash-invariant as a direct consequence.
The general case is deduced from this special case in Section~\ref{ss:proof-disperse}.

Throughout this section, fix a prime number~$p$ and let ${(\Cp, | \cdot |_p)}$ be as in the introduction.
Denote by~$\Ell(\Cp)$ the coarse moduli space of elliptic curves over~$\Cp$.
We consider~$\Ell(\Cp)$ as a subspace of the Berkovich affine line~$\AKber$ over~$\Cp$, using the $j$\nobreakdash-invariant to identify~$\Ell(\Cp)$ with the subspace~$\Cp$ of~$\AKber$.
We endow the space of Borel measures on~$\AKber$ with the weak topology with respect to the space of bounded and continuous real functions.
Denote by~$\xcan$ the ``Gauss'' or ``canonical'' point of~$\AKber$.
For~$x$ in~$\AKber$ denote by~$\delta_x$ the Dirac measure at~$x$.
An \emph{atom} of a Borel measure~$\nu$ on~$\AKber$ is a point~$x$ in~$\AKber$ such that ${\nu(\{ x \}) > 0}$.
A measure is \emph{nonatomic} if it has no atoms.

The endomorphism ring of an elliptic curve over~$\Cp$ only depends on the corresponding class~$E$ in~$\Ell(\Cp)$ of the elliptic curve.
It is isomorphic to~$\Z$ or to an order in a quadratic imaginary extension of~$\Q$.
In the latter case~$E$ is a \emph{\CM{} point} and its \emph{discriminant} is the discriminant of its endomorphism ring.
For every discriminant~$D$, the set
\begin{equation}
  \label{eq:49}
  \Lambda_D
  \=
  \{ E \in \Ell(\Cp) \colon \text{\CM{} point of discriminant~$D$} \}
\end{equation}
is finite and nonempty.
Denote by~$\odelta_{D, p}$ the Borel probability measure on~$\Ell(\Cp)$, defined by
\begin{equation}
  \label{eq:50}
  \odelta_{D, p}
  \=
  \frac{1}{\# \Lambda_D} \sum_{E \in \Lambda_D} \delta_E.
\end{equation}
In contrast to the complex case, as the discriminant~$D$ tends to~$- \infty$ the measure~$\odelta_{D, p}$ does not converge in the weak topology.
In fact, there are infinitely many different accumulation measures \cite[Corollary~1.1]{HerMenRivII}.

\begin{theorem}
  \label{t:CM-nonatomic}
  Let~$p$ be a prime number.
  Then every accumulation measure of
  \begin{equation}
    \label{eq:51}
    \left\{ \odelta_{D, p} \colon D \text{ discriminant} \right\}
  \end{equation}
  in the weak topology that is different from~$\delta_{\xcan}$, is nonatomic.
  In particular, no accumulation measure of~\eqref{eq:51} in the weak topology has an atom in~$\Ell(\Cp)$.
\end{theorem}

One of the main ingredients in the proof of this result is the description of all accumulation measures of~\eqref{eq:51} given in the companion papers~\cite{HerMenRiv20,HerMenRivII}.
We also use an analogous description for Hecke orbits given in \emph{loc.~cit.}
We first establish a result analogous to Theorem~\ref{t:CM-nonatomic} for Hecke orbits (Theorem~\ref{t:Hecke-orbits-nonatomic}) in Section~\ref{ss:Hecke-orbits-nonatomic}, and in Section~\ref{ss:CM-nonatomic} we deduce Theorem~\ref{t:CM-nonatomic} from this result.

Denote by~$\Qpalg$ the algebraic closure of~$\Qp$ inside~$\Cp$, and by~$\Op$ and~$\OQpalg$ the ring of integers of~$\Cp$ and~$\Qpalg$, respectively.
For~$E$ in~$\Ell(\Cp)$ represented by a Weierstrass equation with coefficients in~$\OQpalg$ having smooth reduction, denote by~$\FE$ the formal group of~$E$ and by~$\End(\FE)$ the ring of endomorphisms of~$\FE$ that are defined over the ring of integers of a finite extension of~$\Qp$.
Then~$\End(\FE)$ is either isomorphic to~$\Z_p$, or to a $p$\nobreakdash-adic quadratic order, see, \emph{e.g.}, \cite[Chapter~IV, Section~1, Theorem~1$(iii)$]{Fro68}.
In the latter case, $E$ is said to have \emph{formal complex multiplication} or to be a \emph{formal \CM{} point}.

An elliptic curve class~$E$ in~$\Ell(\Cp)$ has \emph{supersingular reduction}, if there is a representative elliptic curve over~$\Op$ whose reduction is smooth and supersingular.
Denote by~$\Sups$ the set of all elliptic curve classes in~$\Ell(\Cp)$ with supersingular reduction.

\subsection{On the limit measures of Hecke orbits}
\label{ss:Hecke-orbits-nonatomic}
The goal of this section is to prove Theorem~\ref{t:Hecke-orbits-nonatomic} below, which is the main ingredient in the proof of Theorem~\ref{t:CM-nonatomic}.
To state it, we introduce some notation.

A \emph{divisor on~$\Ell(\Cp)$} is an element of the free abelian group
\begin{displaymath}
  \Div(\Ell(\Cp))
  \=
  \bigoplus_{E\in \Ell(\Cp)} \Z E.
\end{displaymath}
For a divisor ${\cD = \sum_{E \in \Ell(\Cp)} n_EE}$ in~$\Div(\Ell(\Cp))$, the \emph{degree} and \emph{support} of~$\cD$ are defined by
\begin{displaymath}
  \deg(\cD)
  \=
  \sum_{E \in \Ell(\Cp)} n_E
  \text{ and }
  \supp(\cD)
  \=
  \{ E \in \Ell(\Cp) \colon n_E \neq 0 \},
\end{displaymath}
respectively.
If in addition ${\deg(\cD) \ge 1}$ and for every~$E$ in~$\Ell(\Cp)$ we have ${n_E \ge 0}$, then
\begin{displaymath}
  \odelta_{\cD, p}
  \=
  \frac{1}{\deg(\cD)}\sum_{E \in \Ell(\Cp)} n_E \delta_E
\end{displaymath}
is a Borel probability measure on~$\Ell(\Cp)$.

For~$n$ in~$\N$, the $n$\nobreakdash-th \emph{Hecke correspondence} is the linear map
\begin{displaymath}
  T_n \colon \Div(\Ell(\Cp)) \to \Div(\Ell(\Cp))
\end{displaymath}
defined for~$E$ in~$\Ell(\Cp)$ by
\begin{displaymath}
  T_n(E)
  \=
  \sum_{C\leq E \text{ of order } n}E/C,
\end{displaymath}
where the sum runs over all subgroups~$C$ of~$E$ of order~$n$.
For background on Hecke correspondences, see \cite[Sections~7.2 and~7.3]{Shi71} for the general theory, or the survey~\cite[Part~II]{DiaIm95}.

\begin{theorem}
  \label{t:Hecke-orbits-nonatomic}
  For each~$E$ in~$\Ell(\Cp)$, every accumulation measure of~$(\odelta_{T_n(E), p})_{n = 1}^{\infty}$ in the weak topology that is different from~$\delta_{\xcan}$, is nonatomic.
  In particular, no accumulation measure of~$(\odelta_{T_n(E), p})_{n = 1}^{\infty}$ in the weak topology has an atom in~$\Ell(\Cp)$.
\end{theorem}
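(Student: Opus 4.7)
The plan is to argue by contradiction along the strategy outlined in Section~\ref{ss:organization}. Suppose $\mu$ is an accumulation measure of $(\odelta_{T_n(E), p})_{n=1}^\infty$ distinct from $\delta_{\xcan}$, and that $\mu(\{x\}) = c > 0$ for some $x$ in $\AKber$. I aim to derive a contradiction by showing that the atom at $x$ ``replicates indefinitely'' and forces a total atomic mass strictly exceeding $1$.

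First I would prove the Hecke independence lemma (Lemma~\ref{l:Hecke-independence}): for distinct primes $\ell \neq \ell'$, both different from $p$, and any $F$ in $\Ell(\Cp)$, the supports of $T_\ell(F)$ and $T_{\ell'}(F)$ have at most a bounded intersection. The underlying reason is that a common point would correspond to both an $\ell$- and an $\ell'$-isogeny from $F$ to a common target; composing one with the dual of the other produces an endomorphism of $F$ of degree $\ell \ell'$, imposing a very restrictive CM condition on $F$. I would then iterate this to control overlaps among several primes simultaneously.

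Next I would exploit the multiplicativity $T_{mn} = T_m \circ T_n$ for coprime $m$ and $n$. By passing to subsequences, arrange that $\mu$ arises as the weak limit along a subsequence $(n_k)$ of indices coprime to a fixed infinite family of primes $\ell_1, \ell_2, \ldots$, all distinct from $p$. Then for every finite subset $I \subset \N$ the sequence $\bigl( \odelta_{T_{(\prod_{i \in I} \ell_i) n_k}(E), p} \bigr)_k$ has, along a further subsequence, the weak limit
\begin{equation}
  \mu_I \= \Bigl( \prod_{i \in I}(\ell_i+1) \Bigr)^{-1} (T_{\prod_{i \in I}\ell_i})_* \mu.
\end{equation}
The pushforward of a Dirac mass at $x$ by the degree-$d$ correspondence $T_d$ becomes a sum of Dirac masses at the points of $\supp(T_d(x))$, each of mass $1/d$. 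Hence $\mu_I$ carries at least $\prod_{i \in I}(\ell_i+1)$ atoms, each of mass at least $c/\prod_{i \in I}(\ell_i+1)$, located at the points of $\supp(T_{\prod_{i \in I} \ell_i}(x))$.

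The final and hardest step is to convert these observations into a genuine contradiction. My proposal is to take a suitable diagonal weak limit of the $\mu_I$ as $|I| \to \infty$, thereby producing a single accumulation measure $\nu$ of the original Hecke orbit that inherits atomic structure from infinitely many of the $\mu_I$ simultaneously. The key input here is Lemma~\ref{l:Hecke-independence}: it ensures that the atom locations contributed by distinct choices of $I$ are pairwise essentially disjoint within $\Ell(\Cp)$, so no cancellation occurs, and the total atomic mass of $\nu$ grows without bound. This contradicts the fact that $\nu$ is a probability measure. The \emph{main obstacle} is precisely this bookkeeping: one must verify that the diagonal limit can be arranged so that atomic mass accumulates rather than leaking to the continuous part or escaping to $\xcan$, and that the bounded overlap from Lemma~\ref{l:Hecke-independence} is quantitatively strong enough to prevent atoms from coalescing under iteration. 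An alternative route, indicated by the footnote on Remark~\ref{r:homogeneous-limits}, would be to construct by averaging pushforwards a Hecke-invariant accumulation measure and then directly exclude atoms in $\Ell(\Cp)$ of such a measure on the grounds that the Hecke orbit of any classical point is infinite.
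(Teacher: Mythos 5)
You correctly identify the key ingredient (Lemma~\ref{l:Hecke-independence}) and the overall "replication of atoms" idea, but the final step of your argument has a genuine gap, which you yourself flag as the main obstacle. Each $\mu_I$ you construct has total atomic mass exactly equal to that of $\mu$: the atom of mass $c$ at $x$ is distributed among the points of $\supp(T_{\prod_{i\in I}\ell_i}(x))$, each receiving proportionally less mass, so nothing is gained in aggregate. Taking a diagonal weak limit of the $\mu_I$ as $|I|\to\infty$ cannot accumulate atomic mass; on the contrary, weak limits generically \emph{lose} atoms when individual masses shrink to zero, so the claim that ``the total atomic mass of $\nu$ grows without bound'' is unfounded and the contradiction never materializes.

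The paper avoids this entirely by never taking a diagonal limit. It first uses \cite[Theorem~C]{HerMenRiv20} and Theorem~\ref{t:Hecke-orbits}$(i)$ to reduce to showing that, for $E$ in $\Sups$ and a fixed coset $\coset$, the single measure $\mu_{\coset}^E$ has no atom. The decisive input is then the exact Hecke-invariance coming from Theorem~\ref{t:Hecke-orbits}$(ii)$: for a prime $q$ in the trivial coset (in particular $q\in(\Z_p^\times)^2$), one has $\frac{1}{q+1}(T_q)_*\mu_{\coset}^E=\mu_{\coset}^E$. This pins down the measure of singletons \emph{of the same measure} after applying $T_q$, yielding $\mu_{\coset}^E(\{E'\})\ge\frac{\mu_{\coset}^E(\{E_0\})}{q+1}\deg(T_q(E_0)|_{\{E'\}})$, and a sum over $2N$ primes $q>100N$ together with Lemma~\ref{l:Hecke-independence} (to control overlaps) gives $1\ge N\mu_{\coset}^E(\{E_0\})$, hence no atom. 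Your construction, working only with accumulation measures and lacking this invariance, has no single measure on which to run the counting. Your proposed alternative route via Remark~\ref{r:homogeneous-limits} is in the right direction but slightly misremembered: the remark points to realizing $\mu_{\coset}^E$ as the pushforward of a homogeneous measure under a finite-degree analytic map and then using that $\corbit$ is infinite, not to an averaging construction of a Hecke-invariant measure. To repair your argument you should import Theorem~\ref{t:Hecke-orbits}$(i)$--$(ii)$ explicitly rather than trying to recover the invariance through a diagonal limit.
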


To prove Theorem~\ref{t:Hecke-orbits-nonatomic}, we first recall some results in~\cite{HerMenRivII}.
For~$E$ in~$\Sups$, define a subgroup~$\NE$ of~$\Z_p^{\times}$ as follows.
If~$E$ is not a formal \CM{} point, then ${\NE \= (\Z_p^{\times})^2}$.
In the case where~$E$ is a formal \CM{} point, denote by~$\Aut(\FE)$ the group of isomorphisms of~$\FE$ defined over~$\OQpalg$, and by~$\nr$ the norm map of the field of fractions of~$\End(\FE)$ to~$\Qp$.
Then,
\begin{displaymath}
  \NE
  \=
  \left\{ \nr \left( \varphi \right) \colon \varphi \in \Aut(\FE) \right\}.
\end{displaymath}
In all the cases~$\NE$ is a multiplicative subgroup of~$\Z_p^{\times}$ containing~$(\Z_p^{\times})^2$.
In particular, the index of~$\NE$ in~$\Z_p^{\times}$ is at most two if~$p$ is odd, and at most four if~$p=2$.

For a coset~$\coset$ in~$\Qp^{\times}/ \NE$ contained in~$\Z_p$, the \emph{partial Hecke orbit of~$E$ along~$\coset$}~is
\begin{displaymath}
  \corbit
  \=
  \bigcup_{n \in \coset \cap \N} \supp(T_n(E)).
\end{displaymath}

In the following theorem we use the action of Hecke correspondences on compactly supported measures, see, \emph{e.g.}, \cite[Section~2.8]{HerMenRivII}.
For~$n$ in~$\N$, put
\begin{equation*}
  \sigma_1(n)
  \=
  \sum_{d \ge 1, d \mid n} d.
\end{equation*}

\begin{theorem}[\textcolor{black}{\cite[Theorem~C and Corollary~6.1]{HerMenRivII}}]
  \label{t:Hecke-orbits}
  For every~$E$ in~$\Sups$ and all cosets~$\coset$ and~$\coset'$ in~$\Qp^{\times}/ \NE$ contained in~$\Z_p$, the following properties hold.
  \begin{enumerate}
  \item[$(i)$]
    The closure~$\corbitc$ of~$\corbit$ in~$\Sups$ is compact.
    Moreover, there is a Borel probability measure~$\mu_{\coset}^E$ on~$\Ell(\Cp)$ whose support is equal to~$\corbitc$, and such that for every sequence~$(n_j)_{j = 1}^{\infty}$ in~$\coset \cap \N$ tending to~$\infty$, we have the weak convergence of measures
    \begin{displaymath}
      \odelta_{T_{n_j}(E), p} \to \mu_{\coset}^E
      \text{ as }
      j \to \infty.
    \end{displaymath}
  \item[$(ii)$]
    For every~$E'$ in~$\overline{\Orb_{\NE}(E)}$ and every~$n$ in~$\coset \cap \N$, we have
    \begin{displaymath}
      T_n \left( \overline{\Orb_{\coset'}(E')} \right)
      =
      \overline{\Orb_{\coset \cdot \coset'}(E)}
      \text{ and }
      \frac{1}{\sigma_1(n)} (T_n)_* \mu_{\coset'}^{E'}
      =
      \mu_{\coset \cdot \coset'}^E.
    \end{displaymath}
  \end{enumerate}
\end{theorem}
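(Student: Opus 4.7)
The plan has three layers: first pin down the geometric picture of $\Sups$, then construct $\mu_{\coset}^E$ by a dynamical self-similarity argument, and finally derive the covariance in (ii) from functoriality of the Hecke correspondences.

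For the geometric setup, I would use that the supersingular reduction locus $\Sups$ decomposes as a disjoint union of finitely many residue disks in $\AKber$, one per supersingular $j$-invariant in $\Fpalg$. Each such disk is the generic fiber of a Lubin--Tate/Gross--Hopkins deformation space and is compact in the Berkovich topology. Since for $n$ in $\coset \cap \N$ the Hecke correspondence $T_n$ preserves supersingular reduction in a controlled way (the $p$-adic valuation of $n$ is fixed modulo the structure of $\NE$), one gets $\corbit \subseteq \Sups$, and its closure is then automatically compact. The support statement for $\mu_{\coset}^E$ will be built into the construction below.

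The core step is the construction of $\mu_{\coset}^E$ as a self-similar limit. I would choose an auxiliary prime $\ell \neq p$ whose class in $\Qp^\times/\NE$ is trivial, so that $T_\ell$ preserves the coset $\coset$, and study the normalized operator $\frac{1}{\ell+1}(T_\ell)_*$ on Borel probability measures supported in $\corbitc$. Using the quaternionic description of $\Sups$ via the algebra $\Bp$ ramified at $p$ and $\infty$, together with the Hecke recursion
\begin{equation}
T_{\ell^{k+1}} = T_\ell \circ T_{\ell^k} - \ell \cdot T_{\ell^{k-1}},
\end{equation}
one verifies that this operator contracts in a suitable norm and has a unique fixed point, which is the candidate $\mu_{\coset}^E$. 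To upgrade from prime powers of $\ell$ to arbitrary sequences $(n_j)$ in $\coset \cap \N$, I would use multiplicativity $T_{mn} = T_m \circ T_n$ at coprime indices and the Eichler--Selberg/Brandt matrix formalism on the supersingular graph to control the discrepancy between $\frac{1}{\sigma_1(n_j)}T_{n_j}(E)$ and iterated applications of $\frac{1}{\ell+1}T_\ell$; a density argument using primes in the class $\coset$ (Chebotarev for the quadratic character cutting out $\NE$) then identifies the limit independently of the subsequence.

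Part (ii) follows from the composition law for isogenies, which yields the inclusion $T_n(\overline{\Orb_{\coset'}(E')}) \subseteq \overline{\Orb_{\coset \cdot \coset'}(E)}$; the reverse inclusion comes from dualizing the isogeny. The measure-level identity then drops out by applying $T_n$ term-by-term to an approximating sequence $\odelta_{T_{m_j}(E'), p} \to \mu_{\coset'}^{E'}$ and invoking uniqueness of the limit from part (i). The main obstacle I expect is the analysis at $T_p$ on formal \CM\ points: there $\Aut(\FE)$ is nontrivial, the coset $\NE$ records the image of $\Aut(\FE)$ under the reduced norm, and the fine decomposition of $T_p(E)$ across residue disks is governed by the conductor of $\End(\FE)$ together with $\ord_p(n)$. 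Making this precise requires the Gross--Hopkins description of the deformation space together with the explicit formulas relating the canonical branch of $T_p$ to \CM\ points of varying conductor—exactly the ingredients the authors import from their companion papers \cite{HerMenRiv20, HerMenRivII}.
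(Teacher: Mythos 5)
This theorem is not proved in the present paper: it is quoted verbatim as \cite[Theorem~C and Corollary~6.1]{HerMenRivII}, so there is no ``paper's own proof'' to compare against here. The actual proof lives in the companion paper, and the only hint the present paper gives about its structure is Remark~\ref{r:homogeneous-limits}, which says the measure $\mu_{\coset}^E$ is the projection of a \emph{homogeneous} measure under an analytic map of finite degree---i.e.\ it comes from a Gross--Hopkins parametrization of $\corbitc$ by a coset space of a $p$-adic group, with $\mu_{\coset}^E$ the push-forward of a Haar-type measure.

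Your geometric setup is sound, and part (ii) (push $T_n$ through an approximating sequence and invoke uniqueness from (i)) is the right reduction. But the central step of your construction has a genuine gap. You propose showing that the averaging operator $\frac{1}{\ell+1}(T_\ell)_*$ ``contracts in a suitable norm'' on probability measures supported in $\corbitc$. For $\ell \neq p$, an $\ell$-isogeny between two elements of $\Sups$ induces an \emph{isomorphism} of their formal groups and hence an \emph{isometry} between the corresponding residue disks of $\Sups$. So $T_\ell$ acts on $\corbitc$ as a $(\ell+1)$-valued correspondence whose branches are isometries, and averaging over isometries does not contract any Wasserstein-type metric: if you start with two nearby Dirac masses, their images under each branch remain at the same distance apart, and the averaged measures are no closer. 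What you actually need is a \emph{spectral gap} or mixing statement for the Hecke operator on the relevant $p$-adic homogeneous space, together with an identification of which coset of $\Qp^\times/\NE$ a given $n$ lands you in---this is precisely the ``Linnik equidistribution on the supersingular locus'' that is the content of the companion paper, not something that falls out of a fixed-point argument. Relatedly, your plan to ``control the discrepancy'' between $\frac{1}{\sigma_1(n_j)} T_{n_j}(E)$ and iterated $\frac{1}{\ell+1}T_\ell$ via Brandt matrices would only give equidistribution among residue disks (the special-fiber picture); it says nothing about the fine distribution within a disk, which is where the measure $\mu_{\coset}^E$ actually has nontrivial structure. So as written, the contraction step is essentially assuming the conclusion, and a correct proof would instead have to go through the group-theoretic parametrization and a Linnik-type ergodic argument.
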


The following corollary is an immediate consequence of Theorems~\ref{t:Hecke-orbits-nonatomic} and~\ref{t:Hecke-orbits} and~\cite[Theorem~C]{HerMenRiv20}.

\begin{coro}
  \label{c:Hecke-orbits-nonatomic}
  For every~$E$ in~$\Ell(\Cp)$, $\alpha$ in~$\Cp$, and ${\varepsilon > 0}$, there exists ${r > 0}$ such that the following set is finite:
  \begin{displaymath}
    \left\{n\in \N \colon \deg(T_n(E)|_{\bfD_p(\alpha, r)}) \ge \varepsilon \sigma_1(n)\right\}.
  \end{displaymath}
\end{coro}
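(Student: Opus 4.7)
I will argue by contradiction. Fix~$E$ in~$\Ell(\Cp)$, $\alpha$ in~$\Cp$, and~$\varepsilon > 0$, and suppose that for every $r > 0$, infinitely many $n$ satisfy $\deg(T_n(E)|_{\bfD_p(\alpha, r)}) \ge \varepsilon \sigma_1(n)$. Since $\deg(T_n(E)) = \sigma_1(n)$, this is equivalent to $\odelta_{T_n(E), p}(\bfD_p(\alpha, r)) \ge \varepsilon$. I would then pick, for each~$k \ge 1$, an integer~$n_k$ with $n_k \to \infty$ and $\odelta_{T_{n_k}(E), p}(\bfD_p(\alpha, 1/k)) \ge \varepsilon$.

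Next, I would extract a weakly convergent subsequence $\odelta_{T_{n_k}(E), p} \to \mu$ with~$\mu$ a Borel probability measure on~$\AKber$. Compactness of probability measures is standard on the compactified projective Berkovich line; the role of \cite[Theorem~C]{HerMenRiv20}, together with Theorem~\ref{t:Hecke-orbits}, is to provide the tightness ruling out mass escape to the point at infinity. This ensures that~$\mu$ is a genuine accumulation measure of $(\odelta_{T_n(E), p})_{n = 1}^{\infty}$ in the sense of Theorem~\ref{t:Hecke-orbits-nonatomic}.

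For each fixed~$r > 0$ and every~$k$ with~$1/k \le r$, the open disk~$\bfD_p(\alpha, 1/k)$ is contained in the closed Berkovich disk $\overline{D}_r$ of radius~$r$ around~$\alpha$, which is a closed subset of~$\AKber$. By the Portmanteau theorem, $\mu(\overline{D}_r) \ge \limsup_k \odelta_{T_{n_k}(E), p}(\overline{D}_r) \ge \varepsilon$. Since~$\alpha$ is a type~I point of~$\AKber$, we have $\bigcap_{r > 0} \overline{D}_r = \{\alpha\}$, and letting~$r \to 0$ yields $\mu(\{\alpha\}) \ge \varepsilon$.

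Thus~$\mu$ has an atom at~$\alpha \in \Cp \subseteq \Ell(\Cp)$; in particular $\mu \neq \delta_{\xcan}$, so Theorem~\ref{t:Hecke-orbits-nonatomic} forces~$\mu$ to be nonatomic, contradicting $\mu(\{\alpha\}) \ge \varepsilon > 0$. The delicate point in this plan is the tightness step---ruling out mass escape to infinity under the weak limit---which is exactly why \cite[Theorem~C]{HerMenRiv20} is invoked; once it is granted, the rest is a routine Portmanteau argument combined with Theorem~\ref{t:Hecke-orbits-nonatomic}.
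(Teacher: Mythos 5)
Your proof is correct and fills in the natural details behind the paper's one-line ``immediate consequence'' claim, using precisely the three cited ingredients and the expected diagonal-plus-Portmanteau contradiction with nonatomicity. The only cosmetic remark is that the tightness step you single out as delicate is really handed to you directly: Theorem~\ref{t:Hecke-orbits}$(i)$ (for~$E \in \Sups$, after passing to a subsequence within one of the finitely many cosets) and \cite[Theorem~C]{HerMenRiv20} (otherwise) already identify the weak limit of any subsequence, so no general compactness argument on~$\P^1_{\Berk}$ is needed.
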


Previously, Charles showed that the set above with~$\N$ replaced by ${\N \ssetminus p\N}$ has zero density \cite[Proposition~3.2]{Cha18}.

The proof of Theorem~\ref{t:Hecke-orbits-nonatomic} is given after the following lemma.

\begin{lemma}
  \label{l:Hecke-independence}
  Let~$E_0$ be in~$\Ell(\Cp)$.
  If for distinct prime numbers~$q$ and~$q'$ we put
  \begin{displaymath}
    I \= \supp(T_q(E_0)) \cap \supp(T_{q'}(E_0)),
  \end{displaymath}
  then we have~$\deg(T_q(E_0)|_I) \le 24$.
\end{lemma}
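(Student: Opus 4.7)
My plan is to associate to each cyclic subgroup $C \subset E_0$ of order $q$ satisfying $E_0/C \in I$ a distinct element of $\End(E_0)$ of degree $qq'$, and then to bound the number of such endomorphisms using the arithmetic of imaginary quadratic orders.

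First, for each such $C$, the condition $E_0/C \in \supp(T_{q'}(E_0))$ supplies a $q'$-isogeny $E_0 \to E_0/C$; taking its dual yields a $q'$-isogeny $\psi \colon E_0/C \to E_0$. Writing $\pi_C \colon E_0 \to E_0/C$ for the quotient, I would set $f_C \= \psi \circ \pi_C \in \End(E_0)$, which has degree $qq'$. The kernel of $f_C$ fits into a short exact sequence
$$0 \to C \to \ker f_C \to \ker \psi \to 0$$
of abelian groups of coprime orders $q$ and $q'$; it therefore splits, and $\ker f_C$ is cyclic of order $qq'$. Since $q$ and $q'$ are distinct primes, such a cyclic group contains a unique subgroup of order $q$, which must be $C$. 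Hence the map $C \mapsto f_C$ is injective, giving
$$\deg(T_q(E_0)|_I) \le \#\{f \in \End(E_0) : \deg f = qq'\}.$$

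To finish, I would exploit the fact that $E_0$ lives over $\Cp$, which has characteristic zero, so $\End(E_0)$ is isomorphic either to $\Z$ or to an order $\cO$ in an imaginary quadratic field $K$. In the first case every endomorphism has square degree, so the right-hand side above is zero. In the second case, a chosen isomorphism $\End(E_0) \cong \cO$ identifies the degree with the norm $\nr_{K/\Q}$, so the count becomes $\#\{\alpha \in \cO : \nr(\alpha) = qq'\}$. Using the inclusion $\cO \subset \cO_K$, this is bounded by $\#\{\alpha \in \cO_K : \nr(\alpha) = qq'\}$, which in turn equals $|\cO_K^\times|$ times the number of principal ideals of $\cO_K$ of norm $qq'$. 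Since each of $q$ and $q'$ has at most two prime ideals of $\cO_K$ above it, the number of ideals of norm $qq'$ is at most $4$, and combining with $|\cO_K^\times| \le 6$ gives the desired bound $4 \cdot 6 = 24$.

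The only genuinely delicate point is the cyclicity of $\ker f_C$ in the second paragraph, which is precisely where the hypothesis $q \neq q'$ is used in an essential way; everything else reduces to classical bookkeeping with imaginary quadratic orders.
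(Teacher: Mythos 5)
Your proof is correct and follows essentially the same strategy as the paper's: reduce the degree count to counting endomorphisms of $E_0$ of degree $qq'$ via the observation that a subgroup of order $qq'$ (with $q\neq q'$ primes) has a unique subgroup of order $q$, and then bound the number of such endomorphisms by the number of elements of norm $qq'$ in $\cO_K$, namely at most $4\cdot\#\cO_K^{\times}\le 24$. The only presentational difference is that the paper indexes the injection by isogenies $\phi\in\Hom_q(E_0,E)$ and then divides out by $\#\Aut(E)$, whereas you index directly by the order-$q$ subgroups $C$; these amount to the same bookkeeping and lead to the same bound.
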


\begin{proof}
  Given an elliptic curve~$\hE$ over~$\Cp$, denote by~$\End(\hE)$ and~$\Aut(\hE)$ the set of all endomorphisms and the set of all automorphisms of~$\hE$, respectively.
  Furthermore, for every elliptic curve~$\hE'$ over~$\Cp$ and every~$m$ in~$\N$, denote by~$\Hom_m(\hE, \hE')$ the set of all isogenies from~$\hE$ to~$\hE'$ of degree~$m$.

  Choose an elliptic curve~$\hE_0$ representing~$E_0$ and for each~$E$ in~$I$ choose an elliptic curve~$\hE$ representing~$E$ and an isogeny~$\phi_E$ in ${\Hom_{q'}(\hE, \hE_0)}$.
  Let~$\phi$ be in ${\Hom_{q}(\hE_0, \hE)}$ and set ${\psi \= \phi_E \circ \phi}$.
  The isogeny~$\psi$ determines both~$E$ in~$I$ and~$\phi$.
  Indeed, suppose that there are~$E'$ in~$I$ and~$\phi'$ in~$\Hom_{q}(\hE_0, \hE')$ with ${\phi_{E'}\circ \phi' = \psi}$.
  The group~$\Ker (\psi)$ has~$qq'$ elements, so it has a unique subgroup of order~$q$.
  Since~$\Ker (\phi)$ and~$\Ker (\phi')$ are two such subgroups, we have ${\Ker (\phi)=\Ker (\phi')}$.
  Then ${E = E'}$ by \cite[Chapter~III, Proposition~4.12]{Sil09}, and from the equality ${\phi_E \circ \phi = \phi_E \circ \phi'}$ we deduce ${\phi = \phi'}$.
  We thus have
  \begin{equation}
    \label{eq:52}
    \begin{split}
      \deg(T_q(E_0)|_{I})
      & =
        \sum_{E \in I} \# \Hom_q(\hE_0, \hE) / \# \Aut(\hE)
      \\ & \le
           \sum_{E \in I} \# \Hom_q(\hE_0, \hE)
      \\ & \le
           \sum_{E \in I} \# \{ \phi_E \circ \phi \colon \phi \in \Hom_q(\hE_0, \hE) \}
      \\ & \le
           \# \{ \psi \in \End(\hE_0) \colon \deg(\psi) = qq' \}.
    \end{split}
  \end{equation}
  If~$E_0$ is not a \CM{} point, then this last number is equal to zero and the lemma follows in this case.
  Suppose~$E_0$ is a \CM{} point, so the field of fractions~$K$ of~$\End(\hE_0)$ is a quadratic imaginary extension of~$\Q$.
  Denote by~$\cO_K$ the ring of integers of~$K$.
  Since each of the ideals~$q \cO_K$ and~$q' \cO_K$ is either prime or a product of two conjugate prime ideals, there are at most four ideals of~$\cO_K$ of norm~$qq'$.
  We thus have
  \begin{displaymath}
    \# \{ \psi \in \End(\hE_0) \colon \deg(\psi) = qq' \}
    \le
    \# \{ x \in \cO_K \colon x \overline{x} = qq' \}
    \le
    4 \# \cO_K^{\times}
    \le
    24.
  \end{displaymath}
  Together with~\eqref{eq:52} this completes the proof of the lemma.
\end{proof}

\begin{proof}[Proof of Theorem~\ref{t:Hecke-orbits-nonatomic}]
  By \cite[Theorem~C]{HerMenRiv20}, it is sufficient to assume that~$E$ is in~$\Sups$.
  Moreover, using Theorem~\ref{t:Hecke-orbits}$(i)$ and \cite[Theorem~C]{HerMenRiv20} again, it is sufficient to prove that for every coset~$\coset$ in~$\Qp^{\times} / \Nr_E$ contained in~$\Z_p$ the measure~$\mu_{\coset}^E$ has no atom in~$\corbitc$.

  Fix~$E_0$ in~$\corbitc$ and let~$N \ge 1$ be a given integer.
  Choose a set~$P$ of~$2N$ prime numbers that are contained in~$(\Z_p^{\times})^2$ and that are larger than~$100 N$.
  Note that every~$q$ in~$P$ is a $p$\nobreakdash-adic square and that by Theorem~\ref{t:Hecke-orbits}$(ii)$ we have
  \begin{equation}
    \label{eq:53}
    \frac{1}{\sigma_1(q)} (T_q)_* \mu_{\coset}^E
    =
    \mu_{\coset}^E.
  \end{equation}
  Moreover, for all distinct~$q$ and~$q'$ in~$P$ denote by~$I(q, q')$ the set~$I$ in Lemma~\ref{l:Hecke-independence} and put
  \begin{displaymath}
    S_q
    \=
    \supp(T_q(E_0)) \ssetminus \bigcup_{q' \in P, q' \neq q} I(q, q').
  \end{displaymath}
  Then by the inequality~$q > 100 N$ and Lemma~\ref{l:Hecke-independence}, we have
  \begin{equation}
    \label{eq:54}
    \begin{split}
      \deg(T_q(E_0)|_{S_q})
      & \ge
        \deg(T_q(E_0)) - \sum_{q' \in P, q' \neq q} \deg(T_q(E_0)|_{I(q, q')})
      \\ & \ge
           q + 1 - 24(\# P - 1)
      \\ & \ge
           \frac{q + 1}{2}.
    \end{split}
  \end{equation}
  On the other hand, note that the signed measure ${\mu_{\coset}^E - \mu_{\coset}^E(\{ E_0 \}) \delta_{E_0}}$ is nonnegative, thus the same holds for ${(T_q)_*(\mu_{\coset}^E - \mu_{\coset}^E(\{ E_0 \}) \delta_{E_0})}$.
  Combined with Theorem~\ref{t:Hecke-orbits}$(ii)$, this implies that for every~$E'$ in~$\Ell(\Cp)$ we have
  \begin{multline*}
    \mu_{\coset}^E (\{ E' \})
    =
    \left( \frac{1}{q + 1} (T_q)_* \mu_{\coset}^E \right)(\{ E' \})
    \ge
    \frac{\mu_{\coset}^E (\{ E_0 \})}{q + 1} \left( (T_q)_* \delta_{E_0} \right)(\{ E' \})
    \\ =
    \frac{\mu_{\coset}^E (\{ E_0 \})}{q + 1} \deg(T_q(E_0)|_{\{E'\}}).
  \end{multline*}
  Together with~\eqref{eq:54} this implies
  \begin{displaymath}
    1
    =
    \mu_{\coset}^E(\corbitc)
    \ge
    \sum_{q \in P} \mu_{\coset}^E(S_q)
    \ge
    \sum_{q \in P} \frac{\mu_{\coset}^E(\{ E_0 \})}{q + 1} \deg(T_q(E_0)|_{S_q})
    \ge
    N \mu_{\coset}^E(\{ E_0 \}).
  \end{displaymath}
  Since~$N$ is arbitrary, this implies that~$E_0$ is not an atom of~$\mu_{\coset}^E$ and completes the proof of the theorem.
\end{proof}

\begin{remark}
  \label{r:homogeneous-limits}
  A different strategy to prove Theorem~\ref{t:Hecke-orbits-nonatomic} is to use that for every~$E$ in~$\Sups$ and every coset~$\coset$ in~$\Qp^{\times} / \NE$ contained in~$\Z_p$, the measure~$\mu_{\coset}^E$ is the projection of a certain homogeneous measure under an analytic map of finite degree.
  Theorem~\ref{t:Hecke-orbits-nonatomic} then follows from the fact that the partial Hecke orbit~$\corbit$ is infinite.
\end{remark}

\subsection{On the limit measures of \CM{} points}
\label{ss:CM-nonatomic}
The goal of this section is to prove Theorem~\ref{t:CM-nonatomic}.
The proof is based on Theorem~\ref{t:Hecke-orbits-nonatomic} and on the description of all accumulation measures of~\eqref{eq:51} given in the companion papers~\cite{HerMenRiv20,HerMenRivII}.
We start recalling some results in the latter.

Recall from Section~\ref{ss:singular-moduli} that a discriminant is the discriminant of an order in a quadratic imaginary extension of~$\Q$.
A \emph{fundamental discriminant} is the discriminant of the ring of integers of a quadratic imaginary extension of~$\Q$.
For each discriminant~$D$, there is a unique fundamental discriminant~$d$ and a unique integer~$f \ge 1$ such that~$D = df^2$.
In this case, $d$ and~$f$ are the \emph{fundamental discriminant} and \emph{conductor of~$D$}, respectively.
A discriminant is \emph{prime}, if it is fundamental and divisible by only one prime number.
If~$d$ is a prime discriminant divisible by~$p$, then
\begin{displaymath}
  p \equiv -1 \mod 4
  \text{ and }
  d = -p,
  \text{ or }
  p = 2
  \text{ and~$d = -4$ or~$d = -8$.}
\end{displaymath}

A \emph{$p$\nobreakdash-adic quadratic order} is a $\Z_p$\nobreakdash-order in a quadratic extension of~$\Qp$, and a \emph{$p$\nobreakdash-adic discriminant} is a set formed by the discriminants of all $\Z_p$\nobreakdash-bases of a $p$\nobreakdash-adic quadratic order.
Every $p$\nobreakdash-adic discriminant is thus a coset in~$\Qp^{\times} / (\Z_p^{\times})^2$ contained in~$\Z_p$.

The \emph{$p$\nobreakdash-adic discriminant} of a formal \CM{} point~$E$, is the $p$\nobreakdash-adic discriminant of the $p$\nobreakdash-adic quadratic order~$\End(\FE)$.
Given a $p$\nobreakdash-adic discriminant~$\pd$, put
\begin{displaymath}
  \Lambda_{\pd}
  \=
  \{ E \in \Ell(\Cp) \colon \text{ formal \CM{} point of $p$\nobreakdash-adic discriminant~$\pd$} \}.
\end{displaymath}

\begin{theorem}[\textcolor{black}{\cite[Theorems~A and~B]{HerMenRivII}}]
  \label{t:CM}
  For every $p$\nobreakdash-adic discriminant~$\pd$, the following properties hold.
  \begin{enumerate}
  \item[$(i)$]
    The set~$\Lambda_{\pd}$ is a compact subset of~$\Ell(\Cp)$, and there is a Borel probability measure~$\nu_{\pd}$ on~$\Ell(\Cp)$ whose support is equal to~$\Lambda_{\pd}$ and such that the following equidistribution property holds.
    Let~$(D_n)_{n = 1}^{\infty}$ be a sequence of discriminants in~$\pd$ tending to~$- \infty$, such that for every~$n$ the fundamental discriminant of~$D_n$ is either not divisible by~$p$, or not a prime discriminant.
    Then we have the weak convergence of measures
    \begin{displaymath}
      \odelta_{D_n, p} \to \nu_{\pd}
      \text{ as }
      n \to \infty.
    \end{displaymath}
  \item[$(ii)$]
    Suppose that there is a prime discriminant~$d$ divisible by~$p$ and an integer ${m \ge 0}$ such that ${D \= d p^{2m}}$ is in~$\pd$.
    Then there are Borel probability measures~$\nu_{\pd}^+$ and~$\nu_{\pd}^-$ on~$\Ell(\Cp)$ such that the following equidistribution property holds.
    For every sequence~$(f_n)_{n = 0}^{\infty}$ in~$\N$ tending to~$\infty$ such that for every~$n$ we have~$\left( \frac{d}{f_n} \right) = 1$ (resp.~$\left( \frac{d}{f_n} \right) = - 1$), we have the weak convergence of measures
    \begin{displaymath}
      \odelta_{D (f_n)^2, p} \to \nu_{\pd}^+
      \text{ (resp.~$\odelta_{D (f_n)^2, p} \to \nu_{\pd}^-$)}
      \text{ as }
      n \to \infty.
    \end{displaymath}
  \end{enumerate}
\end{theorem}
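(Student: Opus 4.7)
The plan is to prove Theorem~\ref{t:CM} by reducing the equidistribution of CM divisors to the equidistribution of Hecke orbits already summarized in Theorem~\ref{t:Hecke-orbits}. The key observation is that if $D_0 = df_0^2$ is a discriminant in the $p$-adic discriminant $\pd$ and $E_0$ is a CM point of discriminant $D_0$, then any other discriminant $D \in \pd$ can be written as $D_0 \cdot (f/f_0)^2 \cdot u$ where $u$ reflects only a $p$-adic unit square correction; in particular, one can express each set $\Lambda_D$ in terms of images of $E_0$ under Hecke correspondences $T_n$ with $n$ in a fixed coset modulo $\NE$.

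First I would establish item~$(i)$. I would classify formal CM points by their $p$-adic discriminant, showing that $\Lambda_\pd$ is the closure of a single formal CM point's orbit under the appropriate profinite group acting through formal-group automorphisms and isogenies of $p$-power degree; compactness of $\Lambda_\pd$ then comes from the compactness of this group. I would then construct $\nu_\pd$ as the push-forward of the Haar probability measure on this group. To prove the convergence $\odelta_{D_n,p} \to \nu_\pd$, I would express the CM divisor of discriminant $D_n = D_0 \cdot (f_n/f_0)^2$ as a sum of Hecke translates $T_{n}(E_0)$ over $n$ in a specific coset, using the classical formula that expresses CM divisors of conductor-$f_n$ orders in terms of optimal embeddings (a Deuring-type correspondence). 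The hypothesis that the fundamental discriminant of $D_n$ is either coprime to $p$ or not a prime discriminant is exactly what guarantees that these Hecke translates stay in a single coset in $\Qp^{\times}/\NE$, so Theorem~\ref{t:Hecke-orbits}$(i)$ gives the limit.

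For item~$(ii)$, the prime $p$ ramifies in the CM quadratic field $K = \Q(\sqrt{d})$, and the choice of sign of the Kronecker symbol $\left(\frac{d}{f_n}\right)$ precisely determines the coset modulo $\NE$ in which the associated Hecke correspondence index lies. This splits the relevant divisors into two disjoint families whose Hecke-orbit limits are different probability measures, yielding $\nu_\pd^+$ and $\nu_\pd^-$. Applying Theorem~\ref{t:Hecke-orbits}$(i)$ on each coset separately gives the two equidistribution statements. The hardest part will be setting up the correct bookkeeping for how CM divisors decompose under the Hecke action in the ramified case, together with verifying that the cosets in $\Qp^{\times}/\NE$ are indeed distinguished by $\left(\frac{d}{f_n}\right)$; the remainder is then a direct invocation of the already-proven Hecke equidistribution, combined with an inclusion-exclusion to handle the possible overlaps between $\supp(T_n(E_0))$ for different $n$, which is controlled by Lemma~\ref{l:Hecke-independence}.
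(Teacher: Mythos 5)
This statement is not proved in the paper: it is cited verbatim from the companion paper~\cite{HerMenRivII} (Theorems~A and~B there), so there is no ``paper's own proof'' here to compare against. With that caveat, let me assess the plausibility of your sketch on its own terms.

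Your high-level strategy --- deduce the CM equidistribution from the Hecke-orbit equidistribution of Theorem~\ref{t:Hecke-orbits} --- is the right one, and indeed Proposition~\ref{p:CM-orbits} (also cited from~\cite{HerMenRivII}) makes the relationship explicit: $\nu_{\pfd} = \mu^{E_{\pfd}}_{\Z_p^\times}$ in the unramified case, a symmetrized version in the ramified case, and push-forwards under iterates of the canonical branch~$\t$ for higher $p$-power conductors. So identifying the CM measure as a Hecke-orbit measure is exactly the intended mechanism. That said, your sketch has a concrete false step and a significant gap.

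The false step is the claim that ``any other discriminant $D \in \pd$ can be written as $D_0 \cdot (f/f_0)^2$'' (up to a $p$-adic unit square). A $p$-adic discriminant $\pd$ is a coset in $\Qp^\times/(\Z_p^\times)^2$; membership of $D$ and $D_0$ in the same $\pd$ only constrains the ratio $D/D_0$ to be a square of a $p$-adic unit. Globally, $D$ and $D_0$ may have completely unrelated fundamental parts away from $p$, so the relation $D = D_0 (f/f_0)^2$ in $\Z$ simply does not hold. Consequently, the identification of $\Lambda_{D_n}$ with a set of the form $\supp(T_{n}(E_0))$ for $n$ ranging over a fixed coset in $\Qp^\times/\Nr_{E_0}$ is not a one-line Deuring correspondence. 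What is actually true is that the support of $T_n(E_0)$ consists of CM points of a \emph{range} of discriminants (with conductor both multiplied and divided by divisors of~$n$), and expressing the divisor $\sum_{E \in \Lambda_{D_n}} E$ in terms of Hecke translates requires a careful decomposition (in~\cite{HerMenRivII} this involves the fixed-point loci $\Fixss(\varphi)$ of Proposition~\ref{p:parametrization} and the canonical-branch relations of Theorem~\ref{t:CM-from-canonical}, not just the Hecke operators $T_n$). Invoking Lemma~\ref{l:Hecke-independence} to handle overlaps is a reasonable instinct, but it controls overlaps between $T_q(E_0)$ and $T_{q'}(E_0)$ for distinct primes, not the conductor-bookkeeping between CM divisors of varying discriminant and the supports of Hecke translates. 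In short, the approach is aimed in the right direction but the crucial correspondence is asserted rather than established, and its precise form is more delicate than the sketch suggests.
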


The proof of Theorem~\ref{t:CM-nonatomic} is given after the following proposition, in which we gather further properties of the limit measures in Theorem~\ref{t:CM}.
To state it, we introduce some notation.

A $p$\nobreakdash-adic discriminant is \emph{fundamental}, if it is the $p$\nobreakdash-adic discriminant of the ring of integers of a quadratic extension of~$\Qp$.
Let~$\pfd$ be a fundamental $p$\nobreakdash-adic discriminant.
For~$\Delta$ in~$\pfd$, the field~$\Qp(\sqrt{\Delta})$ depends only on~$\pfd$, but not on~$\Delta$.
Denote it by~$\Qpfd$.
Choose a formal \CM{} point~$E_{\pfd}$ such that~$\End(\FE)$ is isomorphic to the ring of integers of~$\Qpfd$, as follows.
If~$\pfd$ does not contain a prime discriminant that is divisible by~$p$, then choose an arbitrary formal \CM{} point~$E_{\pfd}$ in~$\Lambda_{\pfd}$.
In the case where~$\pfd$ contains a prime discriminant~$d$ that is divisible by~$p$, then~$d$ is the unique fundamental discriminant in~$\pfd$ with this property and we choose~$E_{\pfd}$ in~$\Lambda_d$.
Note that if~$\Qpfd$ is unramified over~$\Qp$ then~$\Nr_{E_{\pfd}} = \Z_p^{\times}$, and that if~$\Qpfd$ is ramified over~$\Qp$ then~$\Nr_{E_{\pfd}}$ is a subgroup of~$\Z_p^{\times}$ of index two, see, \emph{e.g.}, \cite[Lemma~2.3]{HerMenRivII}.

Denote by~$\kval$ Katz' valuation on~$\Sups$, as defined in \cite[Section~4.1]{HerMenRiv20} and put
\begin{displaymath}
  N_p
  \=
  \left\{ E \in \Sups \colon \kval(E) < \frac{p}{p + 1} \right\}.
\end{displaymath}
For~$E$ in~$N_p$, denote by~$H(E)$ the canonical subgroup of~$E$ \cite[Theorem~3.10.7]{Kat73}.
The \emph{canonical branch of the Hecke correspondence~$T_p$} is the map ${\t \colon N_p \to \Sups}$ defined by ${\t(E) \= E / H(E)}$.
The map~$\t$ is analytic in the sense that it is given by a finite sum of Laurent series, each of which converges on all of~$N_p$, see, \emph{e.g.}, \cite[Theorem~B.1]{HerMenRiv20}.

Given a fundamental $p$\nobreakdash-adic discriminant~$\pfd$ and an integer~$m \ge 0$, define the affinoid
\begin{equation}
  \label{eq:55}
  A_{\pfd p^{2m}}
  \=
  \begin{cases}
    \kval^{-1}(\frac{1}{2} \cdot p^{-m})
    & \text{if~$\Qpfd$ is ramified over~$\Qp$};
    \\
    \kval^{-1} ([1, \infty])
    & \text{if~$\Qpfd$ is unramified over~$\Qp$ and~$m = 0$};
    \\
    \kval^{-1}(\frac{p}{p + 1} \cdot p^{-m})
    & \text{if~$\Qpfd$ is unramified over~$\Qp$ and~$m \ge 1$}.
  \end{cases}
\end{equation}

In the following proposition we summarize some of the results in \cite[Proposition~7.1, (7.13) and Sections~7.2 and~7.3]{HerMenRivII}.

\begin{proposition}
  \label{p:CM-orbits}
  For every fundamental $p$\nobreakdash-adic discriminant~$\pfd$ we have
  \begin{equation}
    \label{eq:56}
    \nu_{\pfd}
    =
    \begin{cases}
      \mu_{\Z_p^\times}^{E_{\pfd}}
      & \text{ if $\Qpfd$ is unramified over~$\Qp$};
      \\
      \frac{1}{2}\left( \mu_{\Npfd}^{E_{\pfd}} + \mu_{\Z_p^\times \ssetminus \Npfd}^{E_{\pfd}}\right)
      & \text{ if~$\Qpfd$ is ramified over~$\Qp$},
    \end{cases}
  \end{equation}
  and for every integer ${m \ge 1}$ we have
  \begin{equation}
    \label{eq:57}
    \nu_{\pfd p^{2m}}
    =
    \begin{cases}
      \frac{1}{p^{m - 1}(p + 1)} ( \t^m \bigm\vert_{A_{\pfd p^{2m}}} )^* \nu_{\pfd}
      & \text{if~$\Qpfd$ is unramified over~$\Qp$};\\
      \frac{1}{p^m} ( \t^m \bigm\vert_{A_{\pfd p^{2m}}} )^* \nu_{\pfd}
      & \text{if~$\Qpfd$ is ramified over~$\Qp$}.
    \end{cases}
  \end{equation}
  If in addition~$\pfd$ contains a prime discriminant, then we also have
  \begin{equation}
    \label{eq:58}
    \nu_{\pfd}^+
    =
    \mu_{\Npfd}^{E_{\pfd}}
    \text{ and }
    \nu_{\pfd}^-
    =
    \mu_{\Z_p^{\times} \ssetminus \Npfd}^{E_{\pfd}},
  \end{equation}
  and~\eqref{eq:57} holds for~$\nu_{\pfd p^{2m}}^+$ (resp.~$\nu_{\pfd p^{2m}}^-$), with~$\nu_{\pfd}$ replaced by~$\nu_{\pfd}^+$ (resp.~$\nu_{\pfd}^-$).
\end{proposition}

\begin{proof}[Proof of Theorem~\ref{t:CM-nonatomic}]
  Let~$(D_n)_{n = 1}^{\infty}$ be a sequence of discriminants tending to~$- \infty$ such that the sequence of measures~$(\odelta_{D_n, p})_{n = 1}^{\infty}$ converges weakly to a measure different from~$\delta_{\xcan}$.
  By \cite[Theorem~A]{HerMenRiv20}, there is a constant~$c > 0$ such that for every~$n$ we have~$|D_n|_p > c$ and~$\Lambda_{D_n} \subseteq \Sups$.
  This implies that~$(D_n)_{n = 1}^{\infty}$ is contained in a finite union of $p$\nobreakdash-adic discriminants, see, \emph{e.g.}, \cite[Lemmas~2.1 and~A.1]{HerMenRivII}.
  Taking a subsequence if necessary, assume that~$(D_n)_{n = 1}^{\infty}$ is contained in a $p$\nobreakdash-adic discriminant~$\pd$.
  Let~$\pfd$ be the fundamental $p$\nobreakdash-adic discriminant and~$m \ge 0$ the integer such that~$\pd = \pfd p^{2m}$, see, \emph{e.g.}, \cite[Lemma~A.1$(i)$]{HerMenRivII}.

  Passing to a subsequence if necessary, there are two cases.

  \partn{Case 1}
  For every~$n$ the fundamental discriminant of~$D_n$ is either not divisible by~$p$, or not a prime discriminant.
  In this case the sequence~$(\odelta_{D_n, p})_{n = 1}^{\infty}$ converges to~$\nu_{\pd}$ by Theorem~\ref{t:CM}$(i)$.
  Then~\eqref{eq:56} in Proposition~\ref{p:CM-orbits} and Theorem~\ref{t:Hecke-orbits-nonatomic} imply that~$\nu_{\pfd}$ is nonatomic.
  This is the desired assertion in the case where~$m = 0$.
  If~$m \ge 1$, then the fact that~$\nu_{\pd}$ is nonatomic follows from~\eqref{eq:57} in Proposition~\ref{p:CM-orbits}, together with the fact that~$\nu_{\pfd}$ is nonatomic and the analyticity of the canonical branch~$\t$ of~$T_p$.

  \partn{Case 2}
  There is a prime discriminant~$d$ that is divisible by~$p$ and a sequence~$(f_n)_{n = 1}^{\infty}$ in~$\N$ such that for every~$n$ we have~$D_n = d f_n^2$ and~${\left( \frac{d}{f_n} \right) = 1}$ (resp. ${\left( \frac{d}{f_n} \right) = -1}$).
  In this case the sequence~$(\odelta_{D_n, p})_{n = 1}^{\infty}$ converges weakly to~$\nu_{\pd}^+$ (resp.~$\nu_{\pd}^-$) by Theorem~\ref{t:CM}$(ii)$.
  Then~\eqref{eq:58} in Proposition~\ref{p:CM-orbits} and Theorem~\ref{t:Hecke-orbits-nonatomic} imply that~$\nu_{\pfd}^+$ and~$\nu_{\pfd}^-$ are both nonatomic.
  This is the desired assertion in the case where~$m = 0$.
  If~$m \ge 1$, then that~$\nu_{\pd}^+$ and~$\nu_{\pd}^-$ are both nonatomic follows from the fact that~$\nu_{\pfd}^+$ and~$\nu_{\pfd}^-$ are both nonatomic, from the last assertion of Proposition~\ref{p:CM-orbits} and from the fact that the canonical branch~$\t$ of~$T_p$ is analytic.
\end{proof}

\subsection{Proof of Theorem~\ref{t:disperse}}
\label{ss:proof-disperse}
In the case where~$f$ is the $j$\nobreakdash-invariant, the desired estimate is a direct consequence of~\eqref{eq:12} and \cite[\emph{Th{\'e}or{\`e}me}~2.4]{CloUll04} if~${v = \infty}$ and of Theorem~\ref{t:CM-nonatomic} and \cite[Theorems~A and~B]{HerMenRivII}, which are summarized in Theorem~\ref{t:CM} in Section~\ref{ss:CM-nonatomic}, if~$v$ is a prime number.

To prove Theorem~\ref{t:disperse} in the general case, let~$K$ be a finite extension of~$\Q$ inside~$\Qalg$, let~$f$ be a nonconstant modular function defined over~$K$ and let~$\Phi(X, Y)$ be a modular polynomial of~$f$ in~$K[X, Y]$.
Note that~$\Phi(X, Y)$ is irreducible in~$\C_v[X, Y]$.
Let~$C_3$, $\theta$ and~$\eta$ be given by Lemma~\ref{l:reduction-to-j} and denote by~$\delta_X$ (resp.~$\delta_Y$) the degree of~$\Phi(X, Y)$ in~$X$ (resp.~$Y$).
Furthermore, note that~$\Phi(X, \alpha)$ is nonzero (Proposition~\ref{p:omitted-and-cuspidal}) and denote by~$Z$ the (possibly empty) finite set of zeros of this polynomial in~$\Cv$.

Let~$\tau$ be a quadratic imaginary number in~$\H$ that is not a pole of~$f$, and put
\begin{equation}
  \label{eq:59}
  \sm
  \=
  j(\tau)
  \text{ and }
  \fsm
  \=
  f(\tau).
\end{equation}
Then, $\sm$ is a singular modulus of the $j$\nobreakdash-invariant and~$\fsm$ is a singular modulus of~$f$.
Noting that for every~$\sigma$ in~$\Gal(\Qalg|K)$ we have ${\Phi(\sigma(\sm), \sigma(\fsm)) = 0}$, by Lemma~\ref{l:reduction-to-j} there is ${r_0 > 0}$ independent of~$\tau$ such that for every~$r$ in~$]0, r_0[$ we have
\begin{multline}
  \label{eq:60}
  \# (\oO_{K}(\fsm) \cap \bfD_v(\alpha, r))
  \le
  \delta_Y \# \{ \sm' \in \oO_{K}(\sm) \colon |\sm'|_v \ge C_3^{-1} r^{- \eta} \}
  \\ \quad +
  \delta_Y \sum_{z_0 \in Z} \# (\oO_{K}(\sm) \cap \bfD_v(z_0, C_3 r^{\theta})).
\end{multline}
In the case where~$v$ is a prime number, we have
\begin{equation}
  \label{eq:61}
  \{ \sm' \in \oO_{K}(\sm) \colon |\sm'|_v > 1 \}
  =
  \emptyset,
\end{equation}
so the desired estimate for~$f$ follows from that for the $j$\nobreakdash-invariant, together with~\eqref{eq:60} and Proposition~\ref{p:singular-moduli}$(ii)$.
To prove the theorem in the case where ${v = \infty}$, we use the fact that the limit measure~$\mu_{\infty}$ in \cite[\emph{Th{\'e}or{\`e}me}~2.4]{CloUll04}, seen as a measure on~$\P^1(\C)$, is nonatomic.
Thus, there is ${R > 1}$ such that
\begin{equation}
  \label{eq:62}
  \mu_{\infty}( \{ z \in \Cv \colon |z|_v > R \} )
  \le
  \frac{\varepsilon}{2 \delta_Y},
\end{equation}
and, if~$\Phi(X, \alpha)$ is nonconstant, such that for every~$z_0$ in~$Z$ we have
\begin{equation}
  \label{eq:63}
  \mu_{\infty}(\bfD_v(z_0, R^{-1}))
  \le
  \frac{\varepsilon}{2 \delta_X \delta_Y}.
\end{equation}
Then, the desired estimate for~$f$ and ${v = \infty}$ follows from that for the $j$\nobreakdash-invariant, together with~\eqref{eq:60} and Proposition~\ref{p:singular-moduli}$(ii)$.

\section{$p$-Adic approximation by singular moduli}
\label{s:badly-approximable-Hauptmodul}

The goal of this section is to prove the following proposition, from which we derive Theorem~\ref{t:badly-approximable-Hauptmodul}.
Throughout this section, fix a prime number~$p$.

\begin{proposition}
  \label{p:badly-approximable-j}
  Let~$\sm_0$ be a singular modulus of the $j$\nobreakdash-invariant.
  Then, there exists a constant ${A > 0}$ such that for every singular modulus~$\sm$ of the $j$\nobreakdash-invariant that is different from~$\sm_0$ we have
  \begin{displaymath}
    -\log |\sm - \sm_0 |_p
    \le
    A \log |D_{\sm}|.
  \end{displaymath}
\end{proposition}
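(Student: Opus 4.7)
The plan is to follow the three-step reduction indicated in Section~\ref{ss:organization}. Fix representatives~$E, E_0 \in \Ell(\Cp)$ of~$\sm$ and~$\sm_0$.

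First, I would reduce to the case where both~$\sm$ and~$\sm_0$ have supersingular reduction. The supersingular locus~$\Sups$ is a finite union of closed disks of definite radius in~$\Ell(\Cp)$, and is therefore separated from the ordinary locus by a positive $p$-adic distance, so the desired inequality is trivial whenever~$\sm$ and~$\sm_0$ have different reduction types. When both are ordinary, \cite[Corollary~B]{HerMenRiv20} says that singular moduli are isolated in the ordinary locus, so only finitely many~$\sm$ come within any fixed $p$-adic distance of~$\sm_0$; those finitely many exceptions are absorbed into the constant~$A$. Thus we may assume~$E, E_0 \in \Sups$.

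Within the supersingular case I would first treat the sub-case in which both conductors~$f_\sm, f_{\sm_0}$ are coprime to~$p$. Then the $p$-adic discriminants of~$E$ and~$E_0$ are fundamental, equal respectively to the classes of~$d_\sm$ and~$d_{\sm_0}$ in~$\Qp^{\times}/(\Z_p^{\times})^2$. Following \cite[Proposition~5.11]{Cha18}, I would use the Gross--Hopkins description of the deformation space of their common supersingular reduction (to be reviewed in Section~\ref{ss:deformations}). In that description, formal \CM{} points correspond to algebraic integers in quadratic extensions of~$\Qp$, and~$|\sm - \sm_0|_p$ is controlled from below by the norm of a difference of such integers, itself controlled polynomially by the discriminants. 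This yields a bound of the form~$-\log|\sm - \sm_0|_p \le A_0 \log|D_\sm| + B_0$ with~$A_0, B_0$ depending only on~$\sm_0$.

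Finally, write the $p$-adic discriminants of~$E$ and~$E_0$ as~$\pfd p^{2m}$ and~$\pfd_0 p^{2m_0}$ with~$\pfd, \pfd_0$ fundamental. By the formulas of \cite[Section~7]{HerMenRivII}, the canonical branch~$\t \colon N_p \to \Sups$ of~$T_p$ satisfies~$\t^m(\Lambda_{\pfd p^{2m}}) \subseteq \Lambda_{\pfd}$, lowering the conductor by a factor of~$p^m$. If~$E$ and~$E_0$ lie in distinct affinoids of the form~$A_{\pd}$, then~$|\sm - \sm_0|_p$ is bounded below by the separation between those affinoids; otherwise~$\pfd = \pfd_0$ and~$m = m_0$, and applying~$\t^m$ to both yields unit-conductor points to which Step~2 applies. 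The analyticity of~$\t$ on~$N_p$ gives a Lipschitz-type estimate that transfers the resulting lower bound on~$|\t^m(E) - \t^m(E_0)|_p$ back to one on~$|E - E_0|_p$, at the cost of a multiplicative factor polynomial in~$p^m \le |D_\sm|^{1/2}$; this is absorbed into an enlarged constant~$A$. The main obstacle is Step~2: the Gross--Hopkins calculation of the $p$-adic valuation of~$j(E) - j(E_0)$ in terms of the discriminants. Steps~1 and~3 are respectively a standard reduction-type argument and a straightforward analytic propagation.
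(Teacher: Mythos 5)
Your three-step reduction matches the paper's proof. Two small calibrations: in Step~2 the quantity that the Gross--Hopkins lemma actually controls is the reduced norm of the \emph{commutator}~$\phi_0\phi-\phi\phi_0$ of the two CM endomorphisms inside the quaternion algebra~$\Bss$, rather than a difference of the quadratic integers themselves; and in Step~3, once~$E$ and~$E_0$ are constrained to the same affinoid~$A_{\pfd p^{2m}}$, the exponent~$m$ is forced to equal the $p$\nobreakdash-adic valuation of the conductor of~$D_{\sm_0}$ and is therefore a constant depending only on~$\sm_0$, so the Lipschitz loss from applying~$\t^m$ is a single constant (your coarser bound, polynomial in~$|D_{\sm}|$, is also sufficient but overcounts).
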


The archimedean counterpart of this estimate was shown by Habegger \cite[Lemmas~5 and~8 and formula~(11)]{Hab15}.
See also Conjecture~\ref{c:badly-approximable} in Section~\ref{ss:badly-approximable-Hauptmodul}.

After some preliminaries in Section~\ref{ss:deformations}, the proofs of Proposition~\ref{p:badly-approximable-j} and Theorem~\ref{t:badly-approximable-Hauptmodul} are given in Sections~\ref{ss:proof-badly-approximable-j} and~\ref{ss:proof-badly-approximable-Hauptmodul}, respectively.
To prove Proposition~\ref{p:badly-approximable-j}, we use that \CM{} points outside~$\Sups$ are isolated \cite[Corollary~B]{HerMenRiv20} to restrict to the case where the \CM{} points corresponding to~$\sm$ and~$\sm_0$ are both in~$\Sups$.
In the case where the conductors of~$D_{\sm}$ and~$D_{\sm_0}$ are both $p$\nobreakdash-adic units, we use an idea in the proof of \cite[Proposition~5.11]{Cha18}.
To deduce the general case from this particular case, we use a formula in~\cite{HerMenRivII} showing how the canonical branch of~$T_p$ relates \CM{} points whose conductors differ by a power of~$p$ (Theorem~\ref{t:CM-from-canonical}).

Denote by~$\tSups$ the (finite) set of isomorphism classes of supersingular elliptic curves over~$\Fpalg$.
For~$\ss$ in~$\tSups$, denote by~$\Dss$ the set of all~$E$ in~$\Ell(\Cp)$ having good reduction, and such that the reduced class is~$\ss$.
The set~$\Dss$ is a residue disc in~$\Ell(\Cp)$.

\subsection{Formal $\Z_p$-modules and elliptic curves}
\label{ss:deformations}
In this section, we briefly recall the work of Gross and Hopkins in~\cite{HopGro94b}, on deformation spaces of formal modules.
See also \cite[Sections~2.4 to~2.7]{HerMenRivII} for a more detailed account of the results needed here.
For every~$\ss$ in~$\tSups$, we describe an action of $(\End(\ss) \otimes \Z_p)^\times$ on a certain ramified covering of~$\Dss$.
In the proof of Proposition~\ref{p:badly-approximable-j} we use a relation between the metric on~$\Dss$ and the natural metric of the covering, which is stated as Theorem~\ref{t:Pi-properties} below.

Fix~$\ss$ in~$\tSups$ and a representative elliptic curve defined over~$\F_{p^2}$ that we also denote by~$\ss$.
Denote by~$\Fss$ the formal group of~$\ss$ endowed with its natural structure of formal $\Z_p$\nobreakdash-module and set
\begin{displaymath}
  \Bss
  \=
  \End_{\Fpalg}(\Fss)\otimes \Qp, \Rss
  \=
  \End_{\Fpalg}(\Fss)
  \text{ and }
  \Gss
  \=
  \Aut_{\Fpalg}(\Fss).
\end{displaymath}
Then, $\Bss$ is a division quaternion algebra over~$\Qp$ and the sets~$\Rss$ and~$\Gss$ embed in~$\Bss$ as the maximal order and its group of units, respectively.
Denote by~$g \mapsto \overline{g}$ the involution of~$\Bss$, and for~$g$ in~$\Bss$ denote by ${\nr(g) \= g \overline{g}}$ in~$\Qp$ its \emph{reduced norm}.
On the other hand, the function ${\ord_{\Bss} \colon \Bss \to \Z \cup \{\infty\}}$ defined for~$g$ in~$\Bss$ by ${\ord_{\Bss}(g) \= \ord_p(\nr(g))}$, is the unique valuation extending the valuation~$2 \ord_p$ on~$\Q_p$.
Identifying~$\Rss$ and~$\Gss$ with their images in~$\Bss$, we have
\begin{displaymath}
  \Rss = \{ g \in \Bss \colon \ord_{\Bss}(g) \ge 0 \}
  \text{ and }
  \Gss = \{ g \in \Bss \colon \ord_{\Bss}(g) = 0 \}.
\end{displaymath}
The function ${\dBss \colon \Bss \times \Bss \to \R}$ defined for~$g$ and~$g'$ in~$\Bss$ by
\begin{displaymath}
  \dBss(g, g')
  \=
  p^{- \frac{1}{2} \ord_{\Bss}(g - g')},
\end{displaymath}
defines an ultrametric distance on~$\Bss$ that makes~$\Bss$ into a topological algebra over~$\Qp$.

Identify the residue field of~$\Cp$ with an algebraic closure~$\Fpalg$ of~$\Fp$ and denote by ${\pi \colon \cO_p \rightarrow \Fpalg}$ the reduction map.
Moreover, denote by~$\Q_{p^2}$ the unique unramified quadratic extension of~$\Q_p$ inside~$\Cp$, and by~$\Z_{p^2}$ its ring of integers.

Let~$R_0$ be a complete, local, Noetherian~$\Z_p$\nobreakdash-algebra with maximal ideal~$\cM_0$ and residue field isomorphic to a subfield~$\rfk$ of~$\Fpalg$ that contains~$\F_{p^2}$.
Fix a reduction map~${R_0 \to \rfk}$.
We are mainly interested in the special case where~$R_0$ the ring of integers of a finite extension of~$\Qp$ contained in~$\Cp$ together with the restriction of~$\pi$, or a quotient of such ring of integers together with the morphism induced by the restriction of~$\pi$.
We stick to the general case for convenience.

A \emph{deformation of~$\Fss$ over~$R_0$} is a pair~$(\cF,\alpha)$, where~$\cF$ is a formal $\Z_p$\nobreakdash-module over~$R_0$ and ${\alpha \colon \tcF \to \Fss}$ is an isomorphism of formal~$\Z_p$\nobreakdash-modules defined over~$\rfk$.
Here, $\tcF$ is the formal group over~$\rfk$ obtained as the base change of~$\cF$ under the reduction map ${R_0 \to \rfk}$.
Two such deformations~$(\cF,\alpha)$ and~$(\cF',\alpha')$ are \emph{isomorphic}, if there exists an isomorphism~$\varphi$ in ${\Iso_{R_0} ( \cF , \cF')}$ with reduction~$\tvarphi$ such that ${\alpha'\circ \tvarphi = \alpha}$.
Denote by~$\Xss(R_0)$ the set of isomorphism classes of deformations of~$\Fss$ over~$R_0$.

For the rest of this section, we further assume that our choice of the representative elliptic curve~$\ss$ is such that~$\Fss$ is isomorphic over~$\F_{p^2}$ to the specialization of a universal formal~$\Z_p$\nobreakdash-module of height two, see~\cite[Lemma~2.5]{HerMenRivII}.
Then, a consequence of the work of Gross and Hopkins is that there exists a bijection
\begin{equation}
  \label{eq:parametrization-X-e}
  \cM_{0} \rightarrow \Xss(R_0)
\end{equation}
that is functorial in~$R_0$, see \cite[Section~12]{HopGro94b} and \cite[Section~2.5]{HerMenRivII} for details.
Moreover, we have the action
\begin{displaymath}
  \begin{array}{rcl}
    \Aut_{\rfk}(\Fss) \times \bfX_\ss(R_0)& \to & \bfX_\ss(R_0)
    \\
    (\beta, (\cF,\alpha)) & \mapsto & \beta\cdot(\cF,\alpha) \= (\cF,\beta \circ \alpha).
  \end{array}
\end{displaymath}

Let~$\cK$ be a finite extension of~$\Q_{p^2}$ inside~$\Cp$, with ring of integers~$\OK$ and residue field~$\rf$.
Consider the reduction map~${\OK \to \rf}$ obtained as the restriction of~$\pi$ to~$\OK$.
Denote by~$\bfY(\ss, \OK)$ the space of isomorphism classes of pairs~$(E,\alpha)$ formed by an elliptic curve~$E$ given by a Weierstrass equation with coefficients in~$\OK$ and having smooth reduction, and an isomorphism ${\alpha \colon \tE \to \ss}$ defined over~$\rf$.
Here, two pairs~$(E,\alpha)$ and~$(E',\alpha')$ are \emph{isomorphic} if there exists an isomorphism ${\psi \colon E \to E'}$ defined over~$\rf$ such that ${\alpha'\circ \widetilde{\psi} = \alpha}$.
Consider the natural map
\begin{equation}
  \label{eq:equivalence-formal}
  \bfY(\ss,\OK) \to \Xss(\OK)
\end{equation}
mapping a class in~$\bfY(\ss,\OK)$ represented by a pair~$(E,\alpha)$, to the class in~$\Xss(\OK)$ represented by the deformation~$(\FE, \halpha)$.
Here, ${\halpha \colon \tFE \to \Fss}$ is the isomorphism induced by~$\alpha$.
This map is known to be a bijection thanks to the so-called Woods Hole Theory, see \cite[Section~6]{LubSerTat64} or \cite[Theorem~4.1]{ColMcM10}.
We obtain a map
\begin{equation}
  \label{eq:64}
  \Pi_{\ss,\cK} \colon \Xss(\OK) \to \Ell_{\sups}(\Qpalg) \cap \Dss,
\end{equation}
by composing the inverse of~\eqref{eq:equivalence-formal} with the natural map from~$\bfY(\ss,\OK)$ to ${\Ell_{\sups}(\Qpalg) \cap \Dss}$.

Consider
\begin{displaymath}
  \sK
  \=
  \{ \text{finite extensions of~$\Q_{p^2}$ inside~$\Cp$} \}
\end{displaymath}
as a directed set with respect to the inclusion.
For each~$\cK$ in~$\sK$, consider the parametrization~\eqref{eq:parametrization-X-e} with ${R_0 = \OK}$.
Taking a direct limit over~$\sK$ and then a completion, we obtain a set~$\hDss$ that is parametrized by the maximal ideal of~$\Op$.
The action of~$\Gss$ on the system~$\{\Xss(\OK) \colon \cK \in \sK\}$  extends to a continuous map ${\Gss \times \hDss \to \hDss}$ that is analytic in the second variable, see \cite[Section~2.6]{HerMenRivII} for details.

In the following theorem, $\delta_{\ss} \= \# \Aut(\ss) /2$.
Note that~$\delta_{\ss} = 1$ if ${j(e) \neq 0, 1728}$ and that in all the cases we have~$1 \le \delta_{\ss} \le 12$, see, \emph{e.g.}, \cite[Appendix~A, Proposition~1.2(c)]{Sil09}.

\begin{theorem}[\textcolor{black}{\cite[Theorem~2.7]{HerMenRivII}}]
  \label{t:Pi-properties}
  Fix~$\ss$ in~$\tSups$.
  Then, the system ${\{\Pi_{\ss,\cK} \colon \cK \in \sK \}}$ given by~\eqref{eq:64} defines a ramified covering map
  \begin{displaymath}
    \Piss \colon \hDss \to \Dss,
  \end{displaymath}
  such that for every~$x$ in~$\hDss$ and every~$E$ in~$\Dss$ we have
  \begin{multline}
    \label{eq:65}
    \min \{ |x - x'|_p \colon x' \in \Piss^{-1}(E) \}^{\delta_{\ss}}
    \le
    |j(\Piss(x)) - j(E)|_p
    \\ \le
    \min \{ |x - x'|_p \colon x' \in \Piss^{-1}(E) \}.
  \end{multline}
\end{theorem}

\subsection{Proof of Proposition~\ref{p:badly-approximable-j}}
\label{ss:proof-badly-approximable-j}
The proof of Proposition~\ref{p:badly-approximable-j} is at the end of this section.

Let~$\ss$ be in~$\tSups$.
The set
\begin{displaymath}
  L(\ss)
  \=
  \{ \phi \in \Z + 2 \End(\ss) \colon \tr(\phi) = 0 \},
\end{displaymath}
is a $\Z$\nobreakdash-lattice of dimension three inside~$\End(\ss)$.
Define for each integer~$m \ge 1$,
\begin{displaymath}
  V_m(\ss)
  \=
  \{ \phi \in L(\ss) \colon \nr(\phi) = m \}.
\end{displaymath}

For each fundamental $p$\nobreakdash-adic discriminant~$\pfd$ and every discriminant~$D$ in~$\pfd$, the image of the set~$V_{|D|}(\ss)$ by the natural map~${\End(\ss) \to \End_{\Fpalg}(\Fss)}$, denoted by ${\phi \mapsto \hphi}$, is contained in
\begin{displaymath}
  \bfL_{\ss, \pfd}
  \=
  \{ \varphi \in \Z_p + 2 \Rss \colon \tr(\varphi) = 0, - \nr(\varphi) \in \pfd \},
\end{displaymath}
see \cite[Lemma~2.1]{HerMenRivII}.
Let ${U_{\ss, \pfd} \colon \bfL_{\ss, \pfd} \to \Gss}$ be the function defined by
\begin{align*}
  U_{\ss, \pfd}(\varphi)
  & \=
    \begin{cases}
      \frac{\varphi^2 + \varphi}{2}
      & \text{if~$\frac{\varphi^2 + \varphi}{2}$ belongs to~$\Gss$};
      \\
      1 + \frac{\varphi^2 + \varphi}{2}
      & \text{otherwise},
    \end{cases}
    \intertext{and for each~$\varphi$ in~$\bfL_{\ss, \pfd}$ define}
    \Fixss(\varphi)
  & \=
    \left\{ x \in \hDss \colon U_{\ss, \pfd}(\varphi) \cdot x = x \right\}.
\end{align*}

Given a fundamental $p$\nobreakdash-adic discriminant~$\pfd$, denote by~$\Q_{p^2}(\sqrt{\pfd})$ the compositum of~$\Q_{p^2}$ and~$\Q_p(\sqrt{\pfd})$.

\begin{proposition}[\textcolor{black}{\cite[Lemmas~4.5$(iv)$ and~4.15, and Proposition~5.6$(i)$]{HerMenRivII}}]
  \label{p:parametrization}
  Fix~$\ss$ in~$\tSups$ and a fundamental $p$\nobreakdash-adic discriminant~$\pfd$.
  \begin{enumerate}
  \item[$(i)$]
    For~$\varphi$ and~$\varphi'$ in~$\bfL_{\ss, \pfd}$ the sets~$\Fixss(\varphi')$ and~$\Fixss(\varphi)$ coincide if~$\varphi'$ belongs to~$\Qp(\varphi)$ and they are disjoint if~$\varphi'$ is not in~$\Qp(\varphi)$.
  \item [$(ii)$]
    We have ${\Piss^{-1}(\Lambda_{\pfd} \cap \Dss) \subseteq \Xss(\cO_{\Q_{p^2}(\sqrt{\pfd})})}$, and for every~$\Delta$ in~$\pfd$ we have
    \begin{displaymath}
      \Piss^{-1} \left( \Lambda_{\pfd} \cap \Dss \right)
      =
      \Fixss \left(\{ \varphi \in \bfL_{\ss, \pfd} \colon \nr(\varphi) = - \Delta \} \right).
    \end{displaymath}
  \end{enumerate}
\end{proposition}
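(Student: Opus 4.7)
The plan is to interpret $\Fixss(\varphi)$ through lifts of endomorphisms of the formal module. For $x$ in $\hDss$ corresponding to a deformation $(\cF_x, \alpha_x)$ of $\Fss$, the condition $U_{\ss,\pfd}(\varphi) \cdot x = x$ is equivalent to $U_{\ss,\pfd}(\varphi)$ lifting through $\alpha_x$ to an automorphism of $\cF_x$. The formula $U_{\ss,\pfd}(\varphi) = \frac{\varphi^2+\varphi}{2}$ (or $1$ plus this) expresses $U_{\ss,\pfd}(\varphi)$ as a polynomial in $\varphi$, and conversely $\varphi = 2U_{\ss,\pfd}(\varphi) + \nr(\varphi)$ (or $2(U_{\ss,\pfd}(\varphi)-1) + \nr(\varphi)$) using $\varphi^2 = -\nr(\varphi)$, which follows from $\tr(\varphi) = 0$. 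Hence the $\Z_p$-subalgebras $\Z_p[\varphi]$ and $\Z_p[U_{\ss,\pfd}(\varphi)]$ of $\Bss$ coincide, and $x \in \Fixss(\varphi)$ is equivalent to $\Z_p[\varphi]$ lifting into $\End(\cF_x)$. I would make this dictionary precise using the deformation-theoretic description of the action of $\Gss$ on $\hDss$ recalled in Section~\ref{ss:deformations}.

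For part $(i)$, suppose first that $\varphi, \varphi' \in \bfL_{\ss, \pfd}$ and $\varphi' \in \Qp(\varphi)$. Being trace-zero elements of the same quadratic $\Qp$-algebra, we have $\varphi' = a\varphi$ for some $a \in \Qp^\times$. The condition $\nr(\varphi'), \nr(\varphi) \in -\pfd$ forces $a^2 \in (\Z_p^\times)^2$, so $a \in \Z_p^\times$, and combined with $\varphi, \varphi' \in \Z_p + 2\Rss$ this gives $\Z_p[\varphi] = \Z_p[\varphi']$, hence $\Fixss(\varphi) = \Fixss(\varphi')$. For the disjointness, if $x \in \Fixss(\varphi) \cap \Fixss(\varphi')$ with $\varphi' \notin \Qp(\varphi)$, then $\End(\cF_x)$ would contain both $\Z_p[\varphi]$ and $\Z_p[\varphi']$, which together generate a noncommutative subalgebra of the quaternion division algebra $\Bss$ since $\Qp(\varphi)$ and $\Qp(\varphi')$ are distinct maximal commutative subfields. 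However, $\cF_x$ is a height-two formal $\Z_p$-module in characteristic zero, so $\End(\cF_x)$ is either $\Z_p$ or a $\Z_p$-order in a quadratic extension of $\Qp$, in particular commutative (\emph{e.g.}, \cite[Chapter~IV, Section~1, Theorem~1]{Fro68}), a contradiction.

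For part $(ii)$, the inclusion $\Piss^{-1}(\Lambda_{\pfd} \cap \Dss) \subseteq \Xss(\cO_{\Q_{p^2}(\sqrt{\pfd})})$ follows from Lubin--Tate theory: a deformation of $\Fss$ with formal complex multiplication by a $\Z_p$-order in $\Qpfd$ is defined over the compositum of $\Q_{p^2}$ with the completion of the corresponding ring class field, which is contained in $\Q_{p^2}(\sqrt{\pfd})$ for any $\Z_p$-order with $p$-adic discriminant $\pfd$. For the equality, fix $\Delta \in \pfd$. The inclusion ``$\supseteq$'' is immediate from the dictionary of the first paragraph: if $x \in \Fixss(\varphi)$ with $\varphi \in \bfL_{\ss,\pfd}$ and $\nr(\varphi) = -\Delta$, then $\End(\cF_x)$ contains the $\Z_p$-order $\Z_p[\varphi]$ of $\Qp(\varphi)$, whose $p$-adic discriminant can be checked to lie in $\pfd$ using $\varphi \in \Z_p + 2\Rss$. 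For ``$\subseteq$'', given $x \in \Piss^{-1}(\Lambda_{\pfd} \cap \Dss)$, the order $\End(\cF_x) \subseteq \Qpfd$ contains some $\varphi_0 \in \bfL_{\ss,\pfd}$ by a direct analysis of trace-zero elements of $\Z_p + 2\Rss$ inside $\Qpfd$. Rescaling $\varphi_0$ by an appropriate element of $\Z_p^\times$ to adjust the norm to $-\Delta$ and invoking part $(i)$, one concludes $x \in \Fixss(\varphi)$ for this rescaled $\varphi$.

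The main obstacle is the bookkeeping in part $(ii)$, specifically producing $\varphi_0 \in \bfL_{\ss,\pfd}$ with $\Qp(\varphi_0) = \Qpfd$ starting from $\End(\cF_x)$, and matching the $p$-adic discriminant coset $\pfd$ with the norm class $-\pfd$ of $\nr(\varphi_0)$. The constraint $\varphi \in \Z_p + 2\Rss$ rather than $\Rss$ is what absorbs the factor of $4$ appearing in $-4\nr(\varphi)$, and makes the analysis at $p = 2$ delicate. A secondary subtlety is that the embedding $\End(\cF_x) \hookrightarrow \Bss$ is canonical only up to the action of $\Gss$, which is precisely why one must pass from $\varphi$ to the unit $U_{\ss,\pfd}(\varphi) \in \Gss$ before letting it act on $\hDss$.
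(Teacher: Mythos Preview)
The paper does not give its own proof of this proposition: it is quoted directly from the companion paper~\cite{HerMenRivII} (Lemmas~4.5$(iv)$ and~4.15 and Proposition~5.6$(i)$), so there is no in-paper argument to compare against. Your deformation-theoretic reading of $\Fixss(\varphi)$---that $U_{\ss,\pfd}(\varphi)\cdot x = x$ is equivalent to the unit $U_{\ss,\pfd}(\varphi)$ lifting through~$\alpha_x$ to an automorphism of~$\cF_x$---is exactly the right mechanism, and is the natural route to both parts.

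There is one concrete slip. The claim that $\Z_p[\varphi] = \Z_p[U_{\ss,\pfd}(\varphi)]$ as $\Z_p$-subalgebras of~$\Bss$ is false when $p = 2$: from $\varphi = 2U_{\ss,\pfd}(\varphi) + \nr(\varphi)$ you only get $\Z_p[\varphi] \subseteq \Z_p[U_{\ss,\pfd}(\varphi)]$, and at $p=2$ the index is~$2$ because $U_{\ss,\pfd}(\varphi) = \tfrac{1}{2}(-\nr(\varphi) + \varphi)$ is not in $\Z_2 + \Z_2\varphi$. The fix is to work with $\Z_p[U_{\ss,\pfd}(\varphi)]$ throughout rather than $\Z_p[\varphi]$. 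A direct computation shows that if $\varphi' = a\varphi$ with $a \in \Z_p^\times$, then $U_{\ss,\pfd}(\varphi') = a\,U_{\ss,\pfd}(\varphi) + b$ for some $b \in \Z_p$ (using that $a(1-a) \in 2\Z_2$ when $p=2$), so $\Z_p[U_{\ss,\pfd}(\varphi)] = \Z_p[U_{\ss,\pfd}(\varphi')]$ and your argument for the ``coincide'' half of~$(i)$ goes through. The ``disjoint'' half is fine as written, since $\Qp(U_{\ss,\pfd}(\varphi)) = \Qp(\varphi)$.

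For part~$(ii)$ the same shift helps. Since $U_{\ss,\pfd}(\varphi) - \overline{U_{\ss,\pfd}(\varphi)} = \varphi$, the discriminant of $\Z_p[U_{\ss,\pfd}(\varphi)]$ is $\varphi^2 = -\nr(\varphi) \in \pfd$; because $\pfd$ is fundamental, $\Z_p[U_{\ss,\pfd}(\varphi)]$ is already the maximal order in $\Qpfd$. So for $x \in \Fixss(\varphi)$ you get $\End(\cF_x) \supseteq \Z_p[U_{\ss,\pfd}(\varphi)]$ maximal, hence $\Piss(x) \in \Lambda_{\pfd}$, which is cleaner than routing through $\Z_p[\varphi]$ and then absorbing the factor of~$4$. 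Your outline for the reverse inclusion and for the first assertion of~$(ii)$ is reasonable but, as you say, still a sketch; the honest work (producing $\varphi_0 \in \bfL_{\ss,\pfd}$ from the embedded maximal order and handling $p=2$) is precisely what the cited lemmas in~\cite{HerMenRivII} carry out.
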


For a fundamental $p$\nobreakdash-adic discriminant~$\pfd$, put~$\varepsilon_{\pfd} \= \frac{1}{2}$ if~$\Qpfd$ is ramified over~$\Qp$ and~$\varepsilon_{\pfd} \= 1$ if~$\Qpfd$ is unramified over~$\Qp$.

\begin{lemma}
  \label{l:fixed-to-fixer}
  For every prime number~$p$, every~$\ss$ in~$\tSups$ and every fundamental $p$\nobreakdash-adic discriminant~$\pfd$, the following property holds.
  For all~$\varphi$ and~$\widecheck{\varphi}$ in~$\bfL_{\ss, \pfd}$ and all~$x$ in~$\Fixss(\varphi)$ and~$\widecheck{x}$ in~$\Fixss(\widecheck{\varphi})$, we have
  \begin{displaymath}
    |x - \widecheck{x}|_p
    \ge
    p^{-\varepsilon_{\pfd}} \dist_{\Bss}(\varphi \widecheck{\varphi}, \widecheck{\varphi} \varphi).
  \end{displaymath}
\end{lemma}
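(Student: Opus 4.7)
Denote $u \= U_{\ss,\pfd}(\varphi)$ and $v \= U_{\ss,\pfd}(\check\varphi)$. The proof will proceed in three steps: an algebraic computation of the additive commutator in $\Bss$, an upper bound for $|uv \cdot \check x - vu \cdot \check x|_p$ coming from the isometric action of $\Gss$ on $\hDss$, and a matching lower bound linking this displacement to the $\Bss$-distance $\dist_{\Bss}(\varphi\check\varphi, \check\varphi\varphi)$.

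First, since $\varphi \in \bfL_{\ss,\pfd}$ has zero reduced trace, $\varphi^2 = -\nr(\varphi)$ lies in $\Q_p$ and is therefore central in $\Bss$. Unpacking the definition of $U_{\ss,\pfd}$ gives $u = c_1 + \tfrac{1}{2}\varphi$ and $v = c_2 + \tfrac{1}{2}\check\varphi$ for some $c_1, c_2 \in \Q_p$, so the central pieces drop out of the commutator:
\begin{equation*}
  uv - vu = \tfrac{1}{4}(\varphi\check\varphi - \check\varphi\varphi).
\end{equation*}
Thus, up to a factor depending only on $p$ (which is trivial for odd $p$, and a harmless $4$ for $p=2$), $\dist_{\Bss}(uv, vu)$ and $\dist_{\Bss}(\varphi\check\varphi, \check\varphi\varphi)$ coincide.

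Second, I will use that the action of $\Gss$ on $\hDss$ is by $p$-adic isometries (from the Gross--Hopkins description recalled in Section~\ref{ss:deformations} and \cite[Section~2.6]{HerMenRivII}). Starting from $u\cdot x = x$ together with the ultrametric inequality,
\begin{equation*}
  |u\cdot \check x - \check x|_p \le \max\bigl(|u\cdot \check x - u\cdot x|_p,\ |x - \check x|_p\bigr) = |x - \check x|_p.
\end{equation*}
Setting $h \= u^{-1}v^{-1}uv$, isometry and the fact that $v\cdot\check x=\check x$ yield $|h\cdot \check x - \check x|_p = |uv\cdot \check x - vu\cdot \check x|_p = |u\cdot \check x - v\cdot(u\cdot \check x)|_p$, and a second application of isometry (this time using $v\cdot\check x=\check x$) bounds the last quantity by $|u\cdot \check x - \check x|_p$. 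Combining,
\begin{equation*}
  |h\cdot \check x - \check x|_p \le |x - \check x|_p.
\end{equation*}

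Third, the technical core is to prove the matching lower bound
\begin{equation*}
  |h\cdot \check x - \check x|_p \ge p^{-\varepsilon_{\pfd}} \dist_{\Bss}(h, 1).
\end{equation*}
The algebraic identity $h - 1 = u^{-1}v^{-1}(uv - vu)$ combined with $|u|_{\Bss}=|v|_{\Bss}=1$ gives $\dist_{\Bss}(h,1)=\dist_{\Bss}(uv,vu)$, so this lower bound, combined with Step~2 and Step~1, yields the lemma. To establish it, I will work in the Gross--Hopkins coordinate on $\hDss$ near $\check x$: the action of an element $g\in\Gss$ is given by a power series, its derivative at a fixed point is controlled by the reduction of $g$ to the residue field, and a direct estimate shows that an element near the identity in $\Gss$ moves $\check x$ by an amount proportional to its $\Bss$-distance from the stabilizer of $\check x$. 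The loss factor $p^{-\varepsilon_{\pfd}}$ records the ramification of the covering $\Piss$ at $\check x$, which by Proposition~\ref{p:parametrization}$(ii)$ is exactly the ramification index of $\Q_{p^2}(\sqrt{\pfd})/\Q_{p^2}$.

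The main obstacle is the last step: the sharp constant $p^{-\varepsilon_{\pfd}}$ requires matching the analytic ramification of $\Piss$ at $\check x$ with the arithmetic ramification of $\Q_{p^2}(\sqrt{\pfd})/\Q_{p^2}$, and some care is needed at $p=2$, where the factor of $4$ from Step~1 interacts with the bound in Step~3.
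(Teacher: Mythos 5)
Your Step~1 commutator computation and Step~2 contraction estimates are sound (granting that $\Gss$ acts isometrically on $\hDss$, which does hold because the action is by $W(\Fpalg)$-linear automorphisms of the formal scheme and hence is given in coordinates by power series with unit linear coefficient). However, the argument has a genuine gap at precisely the place you flag as its ``technical core'': you do not actually prove the lower bound
\begin{displaymath}
  |h\cdot \check x - \check x|_p \ \ge\ p^{-\varepsilon_{\pfd}}\, \dist_{\Bss}(h,1).
\end{displaymath}
You sketch a plan (work in Gross--Hopkins coordinates, estimate the derivative at the fixed point, match analytic and arithmetic ramification) but you do not carry it out, and this is not a routine verification: it encodes exactly the information about how congruences between deformation parameters constrain congruences between the acting endomorphisms, which is the hard arithmetic content of the lemma. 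Without it the proof is not complete.

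The paper's route avoids this bound entirely. Rather than viewing $\Gss$ dynamically and estimating displacements, it reasons about the deformations directly: writing $|x-\check x|_p = |\varpi_0|_p^N$, it base-changes the two deformations $(\cF,\alpha)$ and $(\check\cF,\check\alpha)$ to $R_0 = \cO_{\Q_{p^2}(\sqrt{\pfd})}/\varpi_0^N$, uses the bijectivity of the Gross--Hopkins parametrization to conclude the two deformations agree over $R_0$, and then invokes Gross' Proposition~3.3 from \emph{On canonical and quasicanonical liftings} to deduce the congruence
\begin{displaymath}
  \cO_{\Qp(\varphi)} + \varpi^{N-1}\Rss \ =\ \cO_{\Qp(\check\varphi)} + \varpi^{N-1}\Rss,
\end{displaymath}
which gives $\varphi\check\varphi - \check\varphi\varphi \in \varpi^{N-1}\Rss$ and hence the bound, with the factor $p^{-\varepsilon_{\pfd}}$ falling out of the relation $\ord_p(\varpi_0) = \varepsilon_{\pfd} = \tfrac12\ord_{\Bss}(\varpi)$. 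If you want to complete your approach, the cleanest way to establish your Step~3 lower bound would likely be to reduce it to the same proposition of Gross, at which point you would be reproducing the paper's argument with extra dynamical overhead. I would recommend instead reasoning about congruences of deformations directly, as the paper does, and dropping the commutator-in-$\Gss$ reformulation altogether.
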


\begin{proof}
  Let~$\varpi_0$ and~$\varpi$ be uniformizers of~$\cO_{\Qp{(\sqrt{\pfd}})}$ and~$\cO_{\Qp(\varphi)}$, respectively, and note that
  \begin{equation}
    \label{eq:66}
    \ord_p(\varpi_0)
    =
    \varepsilon_{\pfd}
    =
    \frac{1}{2} \ord_{\Bss}(\varpi).
  \end{equation}
  If~$x = \widecheck{x}$, then~$\widecheck{\varphi}$ is in~$\Qp(\varphi)$ by Proposition~\ref{p:parametrization}$(i)$  and therefore~$\varphi \widecheck{\varphi} = \widecheck{\varphi} \varphi$.
  So the desired property holds in this case.
  Assume~$x \neq \widecheck{x}$.
  By Proposition~\ref{p:parametrization}$(ii)$, $x$ and~$\widecheck{x}$ are both in~$\Piss^{-1}(\Lambda_{\pfd} \cap \Dss)$ and therefore in~$\Xss(\cO_{\Q_{p^2}(\sqrt{\pfd})})$.
  In particular, there is an integer~$N \ge 1$ such that~$|x - \widecheck{x}|_p = |\varpi_0|_p^N$.
  Let~$(\cF, \alpha)$ and~$(\widecheck{\cF}, \widecheck{\alpha})$ represent~$x$ and~$\widecheck{x}$, respectively, and denote by~$\cF_N$ and~$\widecheck{\cF}_N$ the base change of~$\cF$ and~$\widecheck{\cF}$ under the projection map ${\cO_{\Q_{p^2}(\sqrt{\pfd})} \to R_0\= \cO_{\Q_{p^2}(\sqrt{\pfd})} / \varpi_0^N \cO_{\Q_{p^2}(\sqrt{\pfd})}}$.
  Since~\eqref{eq:parametrization-X-e} is a bijection, there is an isomorphism ${\psi \colon \cF_N \to \widecheck{\cF}_N}$ defined over~$R_0$ such that~$\widecheck{\alpha}= \alpha \circ \tpsi$.
  This implies that the maps
  \begin{displaymath}
    \End_{R_0}(\cF_N) \to \Rss
    \text{ and }
    \End_{R_0}(\widecheck{\cF}_N) \to \Rss,
  \end{displaymath}
  given by
  \begin{displaymath}
    \phi \mapsto \alpha \circ \widetilde{\phi} \circ \alpha^{-1}
    \text{ and }
    \phi \mapsto \widecheck{\alpha} \circ \widetilde{\phi} \circ \widecheck{\alpha}^{-1},
  \end{displaymath}
  respectively, have the same image.
  Thus, by \cite[Proposition~3.3]{Gro86} we have
  \begin{displaymath}
    \cO_{\Qp(\varphi)} + \varpi^{N - 1} \Rss
    =
    \cO_{\Qp(\widecheck{\varphi})} + \varpi^{N - 1} \Rss.
  \end{displaymath}
  It follows that~$\varphi \widecheck{\varphi}-\widecheck{\varphi}\varphi$ is in~$\varpi^{N - 1} \Rss$.
  Together with~\eqref{eq:66}, this implies
  \begin{displaymath}
    \dist_{\Bss}(\varphi \widecheck{\varphi}, \widecheck{\varphi} \varphi)
    \le
    |\varpi_0|_p^{N - 1}
    =
    p^{\varepsilon_{\pfd}} |x - \widecheck{x}|_p.
    \qedhere
  \end{displaymath}
\end{proof}

In the following theorem we use the canonical branch~$\t$ of~$T_p$, recalled in Section~\ref{ss:CM-nonatomic}.

\begin{theorem}[\textcolor{black}{\cite[Theorem~4.6]{HerMenRivII}}]
  \label{t:CM-from-canonical}
  Let~$d$ be a fundamental discriminant such that ${\Lambda_d \subseteq \Sups}$.
  Then for every integer~$r \ge 1$ and every integer~$f \ge 1$ that is not divisible by~$p$, we have
  \begin{displaymath}
    \Lambda_{d (f p^r)^2}
    =
    \begin{cases}
      \t^{-1}\left(\Lambda_{d f^2} \right) \cap \kval^{-1}(\frac{1}{2p})
      & \text{if $r = 1$ and~$p$ ramifies in~$\Q(\sqrt{d})$};
      \\
      \t^{1-r} (\Lambda_{d (fp)^2})
      & \text{if $r \ge 2$ and~$p$ ramifies in~$\Q(\sqrt{d})$};
      \\
      \t^{-r} \left(\Lambda_{d f^2} \right)
      & \text{if~$r \ge 1$ and~$p$ is inert in~$\Q(\sqrt{d})$}.
    \end{cases}
  \end{displaymath}
\end{theorem}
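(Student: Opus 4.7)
The plan is to prove the theorem disc-by-disc in the supersingular reduction locus. Fix~$\ss \in \tSups$ and work in~$\Dss$ via the Gross--Hopkins parametrization $\Piss\colon\hDss \to \Dss$. The strategy has three ingredients: (a)~lift~$\t$ to an analytic map~$\widehat{\t}$ on an open subset of~$\hDss$ given by quotienting the universal deformation by its canonical subgroup; (b)~identify, via Proposition~\ref{p:parametrization}$(ii)$, the set~$\Piss^{-1}(\Lambda_{\pfd p^{2m}} \cap \Dss)$ with a union of fixed-point loci $\Fixss(\varphi)$ for $\varphi \in \bfL_{\ss, \pfd p^{2m}}$; and (c)~show that~$\widehat{\t}$ intertwines these fixed-point structures, removing one factor of~$p$ from the conductor at each application.

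For part~(a), the canonical subgroup of order~$p$ is defined on~$\{\kval < \tfrac{p}{p+1}\}$, and quotienting by it produces~$\widehat{\t}$; a valuation computation then shows that~$\widehat{\t}$ sends the $\Piss$-preimage of~$A_{\pfd p^{2m}}$ onto that of~$A_{\pfd p^{2(m-1)}}$, matching the affinoid levels in~\eqref{eq:8}. For part~(c), given~$\varphi \in \bfL_{\ss, \pfd p^{2m}}$ fixing a deformation~$x$, since~$\varphi$ lies in the strictly smaller order $\Z_p + 2p^m \Rss \cap \Qp(\varphi)$, one can produce a natural companion~$\varphi' \in \bfL_{\ss, \pfd p^{2(m-1)}}$ whose action fixes~$\widehat{\t}(x)$, by dividing the ``$p$-power part'' of~$\varphi$ by~$p$ inside the field~$\Qp(\varphi)$. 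This inclusion of fixed-point loci will then be promoted to an equality by a degree count, comparing the degree of~$\widehat{\t}$ on the affinoid to the ratio of class numbers that is made visible by~\eqref{eq:10} in Proposition~\ref{p:CM-orbits}.

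The main obstacle is the ramified case with~$r = 1$, where the conductor jumps from coprime to~$p$ to divisible by~$p$ non-iteratively. Here the constraint~$\kval^{-1}(\tfrac{1}{2p})$ in the statement is essential: the full preimage~$\t^{-1}(\Lambda_{df^2})$ contains $p+1$ points per CM source, but only one of them is reached by quotienting by the canonical subgroup and only it has Katz valuation exactly~$\tfrac{1}{2p}$. Isolating this branch requires a direct Lubin--Tate-coordinate computation of the Katz valuation at $E/H(E)$ for~$E$ a conductor-$fp$ CM point with~$p$ ramified in~$\Q(\sqrt{d})$; this valuation calculation is the technical heart of the argument. Once the base case is established, the $r \ge 2$ ramified assertion follows by iterating part~(c), and the inert statement follows directly from part~(c) starting from the base case $\Lambda_{df^2} \subset \Sups$, which is inherited from the hypothesis $\Lambda_d \subseteq \Sups$.
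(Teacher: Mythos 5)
The paper does not prove Theorem~\ref{t:CM-from-canonical} here; it is imported verbatim from the companion paper as~\cite[Theorem~4.6]{HerMenRivII}, so there is no in-paper proof against which to compare your argument. Evaluated on its own, your outline points in a reasonable direction --- lifting~$\t$ to a map~$\widehat{\t}$ on the Gross--Hopkins deformation tower, translating~$\Piss^{-1}(\Lambda_{\pfd p^{2m}} \cap \Dss)$ into fixed-point loci $\Fixss(\varphi)$ via Proposition~\ref{p:parametrization}$(ii)$, and showing that quotienting by the canonical subgroup drops the $p$-part of the conductor by one --- and you correctly single out the ramified $r=1$ case as the delicate one, where the Katz-valuation constraint $\kval^{-1}\bigl(\tfrac{1}{2p}\bigr)$ is needed to carve the right branch out of $\t^{-1}(\Lambda_{df^2})$.

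That said, there is a genuine structural problem with your plan for promoting inclusions to equalities. You propose to close the argument by a degree count ``made visible by~\eqref{eq:10} in Proposition~\ref{p:CM-orbits}.'' But~\eqref{eq:10} is a pull-back identity for the limit measures $\nu_{\pfd p^{2m}}$, and in~\cite{HerMenRivII} it lives downstream of Theorem~4.6 (it is part of Proposition~7.1 there): the very content of~\eqref{eq:10} is a measure-theoretic consequence of the set-theoretic identity you are trying to prove. Invoking it would be circular. What you actually need at this step is the elementary class-number relation between $h(d(fp^r)^2)$ and $h(df^2)$ (the factor $p^{r-1}(p+1)$ in the inert case, $p^r$ in the ramified case, adjusted by unit indices), matched against the local degree of~$\t^r$ on the relevant affinoid $A_{\pfd p^{2m}}$ --- and the latter degree itself needs to be established, since it is not constant across the three regimes $\kval < \tfrac{1}{p+1}$, $\kval = \tfrac{1}{p+1}$, and $\tfrac{1}{p+1} < \kval < \tfrac{p}{p+1}$. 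Finally, the central technical claim --- that the lift $\widehat{\t}$ carries $\Fixss(\varphi)$ for $\varphi \in \bfL_{\ss,\pfd p^{2m}}$ onto $\Fixss(\varphi')$ with $\varphi' = \varphi/p$ in $\Qp(\varphi)$, compatibly with the affinoid levels of~\eqref{eq:8} --- is asserted rather than argued; this is the step where one must actually track how the canonical subgroup interacts with the endomorphism $\varphi$ on the formal group, and it cannot be left implicit.
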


The following lemma is \cite[Lemma~4.9]{HerMenRiv20}, see also \cite[Lemma~4.8]{ColMcM06} and \cite[Proposition~5.3]{Gro86}.

\begin{lemma}
  \label{l:vcm}
  Denote Katz' valuation by~$\kval$, as in Section~\ref{ss:CM-nonatomic}.
  Let~$D$ be a discriminant such that~$\Lambda_D \subseteq \Sups$ and let~$m \ge 0$ be the largest integer such that~$p^m$ divides the conductor of~$D$.
  Then for every~$E$ in~$\supp(\Lambda_D)$ we have
  \begin{displaymath}
    \min \left\{ \kval(E), \tfrac{p}{p + 1} \right\}
    =
    \begin{cases}
      \frac{1}{2} \cdot p^{-m}
      & \text{ if }p \text{ ramifies in } \Q(\sqrt{D});
      \\
      \frac{p}{p + 1} \cdot p^{-m}
      & \text{ if $p$  is inert in $\Q(\sqrt{D})$.}
    \end{cases}
  \end{displaymath}
\end{lemma}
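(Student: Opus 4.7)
The plan is to reduce to a base case where the conductor of~$D$ is coprime to~$p$, and then use Theorem~\ref{t:CM-from-canonical} together with the multiplicative behaviour of~$\kval$ under the canonical branch~$\t$ of the Hecke correspondence~$T_p$. The key local fact I will use is that on the affinoid $\kval^{-1}([0,\tfrac{1}{p+1}])$ inside~$N_p$ we have $\kval(\t(E))=p\cdot \kval(E)$; this follows from the construction of~$\t$ through the canonical subgroup~$H(E)$ and the effect of the isogeny $E\to E/H(E)$ on the Hasse invariant. Consequently, for~$E$ with $\kval(E)\le \tfrac{p}{p+1}$ and $E'\in\t^{-1}(E)$ lying in $\kval^{-1}([0,\tfrac{1}{p+1}])$, we have $\kval(E')=\kval(E)/p$.

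Next I would treat the base case $m=0$. Write $D=d f^2$ with~$d$ fundamental and $p\nmid f$, and let~$\pfd$ be the fundamental $p$\nobreakdash-adic discriminant containing~$D$. By Proposition~\ref{p:parametrization}$(ii)$, every~$E$ in~$\Lambda_D$ lifts via~$\Piss$ to a point of~$\hDss$ fixed by the action of~$U_{\ss,\pfd}(\varphi)$ for some~$\varphi$ in~$\bfL_{\ss,\pfd}$ with $\Qp(\varphi)=\Qpfd$. If~$p$ ramifies in~$\Q(\sqrt{d})$, then~$\Qpfd$ is a ramified quadratic extension of~$\Qp$; an explicit computation inside the Gross--Hopkins parametrization~\eqref{eq:parametrization-X-e} shows that the fixed point of~$U_{\ss,\pfd}(\varphi)$ satisfies $\kval(E)=\tfrac{1}{2}$. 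If~$p$ is inert in~$\Q(\sqrt{d})$, then~$\Qpfd$ is unramified over~$\Qp$, the fixed points form the corresponding Deuring-type canonical lifts, and a parallel calculation shows they satisfy $\kval(E)\ge \tfrac{p}{p+1}$.

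Finally, I would induct on~$m$ using Theorem~\ref{t:CM-from-canonical}. In the ramified case, the relation $\Lambda_{d(fp)^2}=\t^{-1}(\Lambda_{df^2})\cap \kval^{-1}(\tfrac{1}{2p})$ gives $\kval=\tfrac{1}{2p}$ at $m=1$, and iterating via $\Lambda_{d(fp^m)^2}=\t^{1-m}(\Lambda_{d(fp)^2})$ for $m\ge 2$, together with the multiplication-by-$p$ property of~$\t$, yields $\kval=\tfrac{1}{2}\cdot p^{-m}$ for all $m\ge 1$. In the inert case, $\Lambda_{d(fp^m)^2}=\t^{-m}(\Lambda_{df^2})$; the first application of~$\t^{-1}$ lands on the set $\kval=\tfrac{1}{p+1}=\tfrac{p}{p+1}\cdot p^{-1}$, since the image of~$\t$ restricted to the boundary~$\kval^{-1}(\tfrac{1}{p+1})$ meets the base case locus; subsequent applications of~$\t^{-1}$ then divide~$\kval$ by~$p$, yielding $\kval=\tfrac{p}{p+1}\cdot p^{-m}$ for $m\ge 1$.

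The hard part is the base case, namely identifying Katz' valuation at formal \CM{} points of conductor coprime to~$p$ in terms of the ramification type of the associated $p$\nobreakdash-adic endomorphism algebra. This will combine Proposition~\ref{p:parametrization}, which realises formal \CM{} points as fixed points of a~$\Gss$\nobreakdash-action on~$\hDss$, with an explicit description of the Hasse invariant along fixed loci of quadratic subgroups of~$\Gss$ inside the Gross--Hopkins parametrization~\eqref{eq:parametrization-X-e}; the dichotomy between $\tfrac{1}{2}$ and $\tfrac{p}{p+1}$ reflects whether the quadratic subgroup is split over a ramified or an unramified quadratic extension of~$\Qp$.
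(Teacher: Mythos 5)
The paper does not give a proof of this lemma at all: immediately before the statement it writes ``The following lemma is \cite[Lemma~4.9]{HerMenRiv20}, see also \cite[Lemma~4.8]{ColMcM06} and \cite[Proposition~5.3]{Gro86}.'' So the lemma is quoted from the first companion paper, with the real work deferred to Gross's analysis of quasi-canonical liftings and the Coleman--McMurdy computation. There is therefore no in-paper argument to compare yours against; I can only evaluate your sketch on its own terms.

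Your inductive scaffolding is sound and is essentially the mechanism behind the cited results: Theorem~\ref{t:CM-from-canonical} reduces to the case where the conductor is prime to~$p$, and iterating the canonical branch~$\t$ multiplies~$\kval$ by~$p$ in the interior of its domain, so~$\t^{-1}$ divides it by~$p$. Your case analysis in the inert step~$m=1$, where the preimage is forced onto the locus~$\kval=\frac{1}{p+1}$, is correct, though you state it a bit loosely; it should be derived by ruling out the two alternatives~$\kval(E')<\frac{1}{p+1}$ (which gives~$\kval(\t(E'))<\frac{p}{p+1}$) and~$\kval(E')>\frac{1}{p+1}$ (which gives~$\kval(\t(E'))=1-\kval(E')<\frac{p}{p+1}$), neither of which can hit a point with~$\kval\ge 1$.

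Two genuine issues remain. First, and most importantly, the base case~$m=0$ is the entire content of the lemma and you only assert it: that a formal \CM{} point with fundamental $p$\nobreakdash-adic discriminant has~$\kval(E)=\frac{1}{2}$ in the ramified case and~$\kval(E)\ge 1$ in the unramified case (you write~$\ge\frac{p}{p+1}$, which is weaker than what is true and what~\eqref{eq:8} records) is a nontrivial statement about quasi-canonical liftings; ``an explicit computation inside the Gross--Hopkins parametrization'' is a placeholder, not a proof, and Proposition~\ref{p:parametrization} as stated does not by itself compute the Hasse invariant along fixed loci. Second, your ``key local fact'' that~$\kval(\t(E))=p\cdot\kval(E)$ on~$\kval^{-1}([0,\frac{1}{p+1}])$ is false at the right endpoint: at~$\kval(E)=\frac{1}{p+1}$ Katz's theory only gives~$\kval(E/H(E))\ge\frac{p}{p+1}$, with no equality, and the identity~$\kval(\t(E))=p\kval(E)$ holds only on the open region~$\kval(E)<\frac{1}{p+1}$. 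You in fact treat the boundary case separately later, so your argument survives, but the blanket statement as written would be a bug if taken at face value.
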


\begin{proof}[Proof of Proposition~\ref{p:badly-approximable-j}]
  Let~$E_0$ be the \CM{} point such that~$j(E_0) = \sm_0$.
  Since \CM{} points outside~$\Sups$ are isolated \cite[Corollary~B]{HerMenRiv20}, we can assume that~$E_0$ is in~$\Sups$.
  Let~$\ss$ be the element of~$\tSups$ such that~$E_0$ is in~$\Dss$.

  Let~$\pfd$ be the fundamental $p$\nobreakdash-adic discriminant and~$m \ge 0$ the integer such that~$E_0$ is in~$\Lambda_{\pfd p^{2m}}$.
  Let~$\sm$ be a singular modulus different from~$\sm_0$ and let~$E$ be the \CM{} point satisfying~$j(E) = \sm$.
  Without loss of generality, assume that~$D_{\sm} \neq D_{\sm_0}$ and that~$E$ is in~$\Dss$.
  In view of Lemma~\ref{l:vcm}, we can also assume that there is a fundamental $p$\nobreakdash-adic discriminant~$\pfd'$ such that~$D_{\sm}$ is in~$p^{2m} \pfd'$, see also \cite[Lemma~2.1]{HerMenRivII}.
  Since~$\Lambda_{\pfd p^{2m}}$ and~$\Lambda_{\pfd' p^{2m}}$ are both compact by Theorem~\ref{t:CM}$(i)$ and they are disjoint if~$\pfd' \neq \pfd$, we can also assume that~$\pfd' = \pfd$.
  On the other hand, by Theorem~\ref{t:CM-from-canonical} and the fact that the canonical branch~$\t$ of~$T_p$ is analytic, it is sufficient to prove the lemma in the case where~$m = 0$, so~$E_0$ and~$E$ are both in~$\Lambda_{\pfd} \cap \Dss$.

  Using ${\delta_{\ss} \le 12}$ and Theorem~\ref{t:Pi-properties}, we can find~$x_0$ in~$\Piss^{-1}(E_0)$ and~$x$ in~$\Piss^{-1}(E)$ such that
  \begin{equation}
    \label{eq:67}
    | \sm - \sm_0 |_p
    \ge
    | x - x_0 |_p^{12}.
  \end{equation}
  On the other hand, by Proposition~\ref{p:parametrization}$(ii)$ there is~$\phi_0$ in~$\End(\ss)$ such that~$\hphi_0$ satisfies the equation~$X^2 - D_{\sm_0} = 0$, is in~$\bfL_{\ss, \pfd}$ and is such that~$x_0$ is in~$\Fixss(\hphi_0)$.
  Similarly, we can find~$\phi$ in~$\End(\ss)$ such that~$\hphi$ satisfies the equation~$X^2 - D_{\sm} = 0$, is in~$\bfL_{\ss, \pfd}$ and is such that~$x$ is in~$\Fixss(\hphi)$.
  Note that Proposition~\ref{p:parametrization}$(i)$ and our assumption~${D_{\sm} \neq D_{\sm_0}}$, imply that ${\phi_0 \phi - \phi \phi_0}$ is nonzero.
  Combined with the fact that~$\deg$ is a positive definite quadratic form on~$\End(\ss)$ and \cite[Chapter~V, Lemma~1.2]{Sil09}, this implies
  \begin{multline*}
    \ord_{\Bss}(\hphi_0 \hphi - \hphi \hphi_0 )
    =
    \ord_p(\nr(\hphi_0 \hphi - \hphi \hphi_0))
    =
    \ord_p(\deg(\phi_0 \phi - \phi \phi_0))
    \\ \le
    \log_p (\deg(\phi_0 \phi - \phi \phi_0))
    \le
    \log_p (4\deg(\phi_0)\deg(\phi))
    =
    \log_p(4 \deg(\phi_0) | D_{\sm}|).
  \end{multline*}
  Together Lemma~\ref{l:fixed-to-fixer}, \eqref{eq:67} and the inequality ${|D_{\sm}| \ge 2}$, this implies the desired estimate.
\end{proof}

\subsection{Proof of Theorem~\ref{t:badly-approximable-Hauptmodul}}
\label{ss:proof-badly-approximable-Hauptmodul}
In the case where ${v = \infty}$, Theorem~\ref{t:badly-approximable-Hauptmodul} is a direct consequence of the following proposition, and in the case where~$v$ is a prime number, Theorem~\ref{t:badly-approximable-Hauptmodul} is a direct consequence of Proposition~\ref{p:badly-approximable-j}, the following proposition and Lemma~\ref{l:Hauptmodul-sm} below.

\begin{proposition}
  \label{p:badly-approximable-conditional}
  Let~$f$ be a nonconstant modular function defined over~$\Qalg$, let~$\Phi(X, Y)$ be a modular polynomial of~$f$ in~$\Qalg[X, Y]$ and let~$v$ be in~$\VQ$.
  Furthermore, let~$\alpha$ in~$\Qalg$ be a non-cuspidal value of~$f$ if ${v = \infty}$, or such that every root of~$\Phi(X, \alpha)$ is badly approximable in~$\Cv$ by the singular moduli of the $j$\nobreakdash-invariant if~$v$ is a prime number.
  Then, $\alpha$ is badly approximable in~$\Cv$ by the singular moduli of~$f$.
\end{proposition}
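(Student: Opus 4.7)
The plan is to collect the ingredients and handle the trivial regime first. Since $\alpha$ is a value of $f$, it is not an omitted value, so by Proposition~\ref{p:omitted-values} the polynomial $\Phi(X, \alpha)$ is nonconstant; let $Z \subseteq \Cv$ be its finite set of zeros, which lies in $\Qalg$ as $\Phi \in \Qalg[X,Y]$. Noting that $\Phi$ is irreducible in $\Cv[X, Y]$ and depends on both variables, Lemma~\ref{l:reduction-to-j}$(ii)$ furnishes constants $C > 1$, $\theta > 0$ and $r > 0$ such that whenever $\Phi(z, w) = 0$ with $w \in \Cv \ssetminus \{\alpha\}$ satisfying $|w - \alpha|_v < r$, one has $\min\{|z - z_0|_v : z_0 \in Z\} < C|w - \alpha|_v^\theta$. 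For each $z_0 \in Z$, by Habegger's result \cite[Lemmas~5 and~8 and formula~(11)]{Hab15} if $v = \infty$, or by hypothesis if $v$ is prime, $z_0$ is badly approximable in $\Cv$ by the singular moduli of the $j$-invariant; taking the worst constants over the finite set $Z$ yields $A_0, B_0 > 0$ valid for every $z_0 \in Z$.

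Next I would bound $-\log|\fsm - \alpha|_v$ for an arbitrary singular modulus $\fsm$ of $f$ with $\fsm \neq \alpha$. The case $|\fsm - \alpha|_v \ge r$ gives a constant bound immediately. In the remaining case, the plan is to select a quadratic imaginary $\tau \in \H$ with $f(\tau) = \fsm$ and $\sm := j(\tau) \notin Z$, so that $(\sm, \fsm)$ falls in the range of Lemma~\ref{l:reduction-to-j}$(ii)$ and $\sm$ is a singular modulus of the $j$-invariant distinct from every element of $Z$. The lemma then yields some $z_0 \in Z$ with $|\sm - z_0|_v < C|\fsm - \alpha|_v^\theta$, and bad approximability of $z_0$ gives $-\log|\sm - z_0|_v \le A_0 \log \#\oO_\Q(\sm) + B_0$. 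Combining,
\begin{displaymath}
-\log|\fsm - \alpha|_v \le \tfrac{1}{\theta}\left(A_0 \log \#\oO_\Q(\sm) + B_0 + \log C\right).
\end{displaymath}
Taking any finite extension $K$ of $\Q$ containing the coefficients of $\Phi$ and invoking Proposition~\ref{p:singular-moduli}$(ii)$ gives $\#\oO_\Q(\sm) \le C_0 \cdot \#\oO_\Q(\fsm)$ for some constant $C_0$, which converts the last inequality into an estimate of the required form.

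The only step requiring care is the existence of a $\tau$ with $\sm \notin Z$ for all but finitely many $\fsm$. If no such $\tau$ exists for a given $\fsm$, then every quadratic imaginary preimage of $\fsm$ has $j$-value in $Z$, so in particular there is some $\sm_1 \in Z$ with $\Phi(\sm_1, \fsm) = 0$. For each fixed $\sm_1 \in Z$, irreducibility of $\Phi$ ensures that $\Phi(\sm_1, Y)$ is a nonzero polynomial of degree at most $\delta_Y := \deg_Y \Phi$, hence has at most $\delta_Y$ roots in $\Cv$. Therefore the exceptional set of $\fsm$ has at most $|Z| \cdot \delta_Y$ elements, and for each such $\fsm$ (which satisfies $\fsm \ne \alpha$) the quantity $-\log|\fsm - \alpha|_v$ is trivially bounded.

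I do not foresee any serious obstacle. The argument is a clean reduction of approximation by singular moduli of $f$ to approximation by singular moduli of the $j$-invariant via Lemma~\ref{l:reduction-to-j}$(ii)$, combined with the orbit-size comparison in Proposition~\ref{p:singular-moduli}$(ii)$; the mildest subtlety is the counting step above used to discard a finite set of pathological $\fsm$ whose $f$-fiber lives entirely over $Z$.
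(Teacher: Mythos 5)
Your proof is correct and mirrors the paper's argument: both reduce to bad approximability of the roots of $\Phi(X, \alpha)$ by the singular moduli of the $j$\nobreakdash-invariant via Lemma~\ref{l:reduction-to-j}$(ii)$, then transfer orbit sizes through Proposition~\ref{p:singular-moduli} (the paper uses item~$(iii)$, passing through $|D_{\sm}|$, where you use item~$(ii)$ and $\#\oO_{\Q}(\sm)$ directly -- an equivalent reformulation). The only substantive difference is that you explicitly account for the finitely many singular moduli $\fsm$ all of whose quadratic imaginary $f$\nobreakdash-preimages $\tau$ satisfy $j(\tau) \in Z$, a degenerate case the paper's proof silently sweeps into the ``sufficiently close to $\alpha$'' phrasing; your bookkeeping here is a minor but genuine improvement in rigor.
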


In the case where ${v = \infty}$, the hypothesis that~$\alpha$ is a non-cuspidal value of~$f$ is necessary by Proposition~\ref{p:cuspidal-are-approximated}$(i)$.
In the case where~$v$ is a prime number and~$\alpha$ is an omitted value of~$f$, the hypothesis on~$\alpha$ is automatically satisfied because the polynomial~$\Phi(X, \alpha)$ is constant (Proposition~\ref{p:omitted-and-cuspidal}$(i)$).

\begin{proof}[Proof of Proposition~\ref{p:badly-approximable-conditional}]
  Note that~$\Phi(X, Y)$ is irreducible in~$\Cv[X, Y]$ and that~$\Phi(X, \alpha)$ is nonzero by Proposition~\ref{p:omitted-and-cuspidal}.
  Denote by~$Z$ the (possibly empty) finite set of zeros of~$\Phi(X, \alpha)$ in~$\Cv$.
  Furthermore, let~$C_3$, $\theta$ and~$\eta$ be given by Lemma~\ref{l:reduction-to-j}.

  Suppose ${v = \infty}$.
  By \cite[Lemmas~5 and~8 and formula~(11)]{Hab15} there are constants ${A > 0}$ and~$B$ such that for every~$z_0$ in~$Z$ and every singular modulus~$\sm$ of the $j$\nobreakdash-invariant different from~$z_0$, we have
  \begin{equation}
    \label{eq:68}
    - \log |\sm - z_0|_v
    \le
    A \log |D_{\sm}| + B.
  \end{equation}
  On the other hand, Proposition~\ref{p:omitted-and-cuspidal}$(ii)$, Lemma~\ref{l:reduction-to-j} and our hypothesis that~$\alpha$ is a non-cuspidal value of~$f$ imply that~$Z$ is nonempty and that for every quadratic imaginary number~$\tau$ in~$\H$ such that~$f(\tau)$ is sufficiently close to~$\alpha$, we have
  \begin{equation}
    \label{eq:69}
    \min \{ |j(\tau) - z_0|_v \colon z_0 \in Z \}
    \le
    C_3 |f(\tau) - \alpha|_v^{\theta}.
  \end{equation}
  Together with~\eqref{eq:68} and Proposition~\ref{p:singular-moduli}$(iii)$, this implies that~$\alpha$ is badly approximable in~$\C$ by the singular moduli of~$f$.

  It remains to consider the case where~$v$ is a prime number~$p$.
  Recall that~$C_3$ and~$\eta$ are given by Lemma~\ref{l:reduction-to-j} and put ${r \= C_3^{- \frac{1}{\eta}}}$.
  In the case where~$\alpha$ is an omitted value of~$f$, the desired assertion is given by Proposition~\ref{p:cuspidal-are-approximated}$(ii)$.
  Suppose that~$\alpha$ is a value of~$f$, so~$\Phi(X, \alpha)$ is nonconstant by Proposition~\ref{p:omitted-and-cuspidal}$(i)$.
  In particular, $Z$ is nonempty.
  Proposition~\ref{p:singular-moduli}$(iii)$ with~$f$ replaced by~$j$ and our hypotheses imply that there are constants ${A > 0}$ and~$B$ such that for every~$z_0$ in~$Z$ and every singular modulus~$\sm$ of the $j$\nobreakdash-invariant different from~$z_0$ we have~\eqref{eq:68}.
  On the other hand, reducing~$r$ if necessary Lemma~\ref{l:reduction-to-j} implies that for every quadratic imaginary number~$\tau$ in~$\H$ such that~$f(\tau)$ is in~$\bfD_p(\alpha, r)$ the singular modulus~$j(\tau)$ satisfies either~\eqref{eq:69} or
  \begin{equation}
    \label{eq:70}
    |j(\tau)|_p
    >
    C_3^{-1} |f(\tau) - \alpha|_p^{-\eta}
    >
    1.
  \end{equation}
  This last chain of inequalities is impossible since~$j(\tau)$ is an algebraic integer.
  We thus have~\eqref{eq:69}.
  Together with~\eqref{eq:68} and Proposition~\ref{p:singular-moduli}$(iii)$, this implies that~$\alpha$ is badly approximable in~$\Cp$ by the singular moduli of~$f$.
\end{proof}

\begin{lemma}
  \label{l:Hauptmodul-sm}
  Let~$h$ be a \emph{Hauptmodul} defined over~$\Qalg$ and let~$\Phi(X, Y)$ be a modular polynomial of~$h$ in~$\Qalg[X, Y]$.
  Then, for every singular modulus~$\hsm_0$ of~$h$, every root of~$\Phi(X, \hsm_0)$ is a singular modulus of the $j$\nobreakdash-invariant.
\end{lemma}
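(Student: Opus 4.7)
The plan is to combine Proposition~\ref{p:modular-functions}$(i)$ with the injectivity coming from the Hauptmodul hypothesis. Write $\fsm_0 = f(\tau_0)$ for some quadratic imaginary $\tau_0 \in \H$ that is not a pole of $f$, and let $\beta \in \C$ be any root of $\Phi(X, \fsm_0)$. I would first apply Proposition~\ref{p:modular-functions}$(i)$ to the zero $(\beta, \fsm_0)$ of $\Phi$ to produce some $\tau \in \H$ with $j(\tau) = \beta$ and $f(\tau) = \fsm_0$. Since $\beta = j(\tau)$, it remains only to show that $\tau$ is quadratic imaginary.

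For this, let $\Gamma$ be the genus zero subgroup of $\GL^+(2, \Q)$ for which $f$ is a Hauptmodul, and let $f_0 \colon X(\Gamma) \to \P^1(\C)$ be the induced biholomorphism. Since $\fsm_0 \in \Qalg$ is finite and neither $\tau$ nor $\tau_0$ is a pole of $f$, the classes $[\tau], [\tau_0] \in \Gamma \backslash \H \subseteq X(\Gamma)$ both satisfy $f_0([\tau]) = f_0([\tau_0]) = \fsm_0$. The injectivity of $f_0$ then gives $[\tau] = [\tau_0]$, so there exists $\gamma \in \Gamma$ with $\tau = \gamma \tau_0$. Because $\gamma$ acts on $\tau_0$ by a M\"obius transformation with rational coefficients, $\tau$ lies in the quadratic imaginary field $\Q(\tau_0)$; since $\tau \in \H$ is nonreal, it is not in $\Q$, and therefore $\tau$ is quadratic imaginary, so $\beta = j(\tau)$ is a singular modulus of the $j$-invariant.

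The one point to verify is that the Hauptmodul bijection really gives the desired injectivity: because $f_0$ is a bijection on the compact Riemann surface $X(\Gamma)$, the equality $f_0([\tau]) = f_0([\tau_0])$ with $[\tau_0]$ in the open part $\Gamma \backslash \H$ forces $[\tau] = [\tau_0]$, so $[\tau]$ is automatically also in $\Gamma \backslash \H$ rather than at a cusp. Everything else is a direct application of Proposition~\ref{p:modular-functions}$(i)$ and the fact that $\GL^+(2, \Q)$ preserves the set of quadratic imaginary numbers in $\H$.
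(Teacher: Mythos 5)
Your proof is correct and follows essentially the same route as the paper's: apply Proposition~\ref{p:modular-functions}$(i)$ to produce $\tau$ with $j(\tau)=\beta$ and $f(\tau)=\fsm_0$, then use the \emph{Hauptmodul} property to find $\gamma\in\Gamma\subseteq\GL^+(2,\Q)$ with $\gamma(\tau_0)=\tau$, and conclude that $\tau$ is quadratic imaginary. You have merely spelled out two points the paper leaves implicit — that the injectivity of $f_0$ forces $[\tau]$ into the open part $\Gamma\backslash\H$, and that rational M\"obius transformations preserve quadratic imaginarity — which is fine.
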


The proof of this lemma is after the following one.

\begin{lemma}
  \label{l:rationalizable}
  Let~$\gamma$ be an element of~$\SL(2, \R)$ that is contained in a subgroup of~$\SL(2, \R)$ commensurable to~$\SL(2, \Z)$.
  Then, there are integers $a$, $b$, $c$ and~$d$ such that ${ad - bc > 0}$ and such that for every~$\tau$ in~$\H$ we have ${\gamma(\tau) = \frac{a \tau + b}{c\tau + d}}$.
  In particular, the image by~$\gamma$ of a quadratic imaginary number in~$\H$ is also quadratic imaginary.
\end{lemma}

\begin{proof}
  Let~$\Gamma$ be a subgroup of~$\SL(2, \R)$ commensurable to~$\SL(2, \Z)$ containing~$\gamma$.
  Then, the set of cusps of~$\Gamma$ is equal to that of~$\SL(2, \Z)$, which is equal to~$\P^1(\Q)$ \cite[Proposition~1.30]{Shi71}.
  It follows that ${\gamma(\P^1(\Q)) = \P^1(\Q)}$, and in particular that~$\gamma(\infty)$ is in~$\P^1(\Q)$.
  In the case where ${\gamma(\infty) \neq \infty}$, let~$\hgamma$ be an element of~$\SL(2, \R)$ such that for every~$\tau$ in~$\H$ we have
  \begin{equation}
    \hgamma(\tau)
    =
    \frac{1}{\gamma(\tau) - \gamma(\infty)}.
  \end{equation}
  Otherwise, put ${\hgamma \= \gamma}$.
  In all of the cases, we have ${\hgamma(\Q) = \Q}$ and therefore there is~$\lambda$ in~$\Q$ such that ${\lambda > 0}$ and such that for every~$\tau$ in~$\H$ we have ${\hgamma(\tau) = \lambda \tau + \hgamma(0)}$.
  Since~$\hgamma(0)$ is in~$\Q$, this implies the desired assertion for~$\hgamma$ and therefore for~$\gamma$.
\end{proof}

\begin{proof}[Proof Lemma~\ref{l:Hauptmodul-sm}]
  Let~$\tau_0$ be a quadratic imaginary number in~$\H$ such that ${h(\tau_0) = \hsm_0}$.
  Note that~$\Phi(X, Y)$ is irreducible over~$\C$, so by Proposition~\ref{p:modular-functions} for each root~$\sm$ of~$\Phi(X, \hsm_0)$ there is~$\tau$ in~$\H$ such that
  \begin{equation}
    \label{eq:71}
    \sm = j(\tau)
    \text{ and }
    \hsm_0 = h(\tau).
  \end{equation}
  Since~$h$ is a \emph{Hauptmodul}, there is~$\gamma$ in the stabilizer of~$h$ in~$\SL(2, \R)$ such that ${\gamma(\tau_0) = \tau}$.
  By Lemma~\ref{l:rationalizable}, $\tau$ is also a quadratic imaginary number and therefore~$\sm$ is a singular modulus of the $j$\nobreakdash-invariant.
\end{proof}

\section{Proof of Theorems~\ref{t:S-units-genus-0} and~\ref{t:units-are-rare}}
\label{s:proof-S-units}

In this section we prove the following theorem and we deduce from it Theorems~\ref{t:S-units-genus-0} and~\ref{t:units-are-rare}.

\begin{custtheo}{A'}
  \label{t:S-units-conditional}
  Let~$f$ be a nonconstant modular function defined over~$\Qalg$ and~$\Phi(X, Y)$ a modular polynomial of~$f$ in~$\Qalg[X, Y]$.
  Moreover, let~$\alpha$ in~$\Qalg$ be a non-cuspidal value of~$f$ and let~$S$ be a finite set of prime numbers~$p$ such that every root of~$\Phi(X, \alpha)$ is badly approximable in~$\Cp$ by the singular moduli of the $j$\nobreakdash-invariant.
  Then, there are at most finitely many singular moduli~$\fsm$ of~$f$ such that ${\fsm - \alpha}$ is an $S$\nobreakdash-unit.
\end{custtheo}

Theorem~\ref{t:units-are-rare} is a direct consequence of Theorem~\ref{t:S-units-conditional} with ${S = \emptyset}$ and ${\alpha = 0}$ applied to~$f$ and to~$\frac{1}{f}$.
Another direct consequence of Theorem~\ref{t:S-units-conditional} is the following version of Theorem~\ref{t:units-are-rare} for $S$\nobreakdash-units, under the hypothesis that there is an affirmative solution to Conjecture~\ref{c:badly-approximable}.

\begin{coro}
  \label{c:S-units-conditional-non-weak}
  Let~$S$ be a finite set of prime numbers~$p$ such that every algebraic number is badly approximable in~$\Cp$ by the singular moduli of the $j$\nobreakdash-invariant.
  Moreover, let~$f$ be a nonconstant modular function defined over~$\Qalg$ that is not a weak modular unit.
  Then, there are at most a finite number of singular moduli of~$f$ that are $S$\nobreakdash-units.
\end{coro}

The proof of Theorem~\ref{t:S-units-conditional} is given in Section~\ref{ss:proof-S-units-conditional}.
In Section~\ref{ss:proof-S-units-genus-0} we prove Theorem~\ref{t:S-units-genus-0} and the following corollary of Theorem~\ref{t:S-units-conditional}.
To state it, recall that a subgroup of~$\SL(2, \R)$ is a \emph{congruence group}, if for some~$N$ in~$\N$ it contains
\begin{equation}
  \left\{ \bigl( \begin{smallmatrix} a & b \\ c & d \end{smallmatrix} \bigr) \in \SL(2, \Z) \colon a, d \equiv 1 \mod N \text{ and } b, c \equiv 0 \mod N \right\}
\end{equation}
as a finite index subgroup.
The following corollary shows that an affirmative solution to Conjecture~\ref{c:badly-approximable} would yield a version of Theorem~\ref{t:S-units-genus-0} for a general congruence or genus zero group and a general algebraic value.

\begin{coro}
  \label{c:S-units-conditional-non-unit}
  Let~$f$ be a nonconstant modular function defined over~$\Qalg$ for a congruence or a genus zero group and let~$\alpha$ in~$\Qalg$ be a value of~$f$.
  Suppose that for every prime number~$p$, every algebraic number is badly approximable in~$\Cp$ by the singular moduli of the $j$\nobreakdash-invariant.
  Then, there are at most a finite number of singular moduli~$\fsm$ of~$f$ such that ${\fsm - \alpha}$ is an $S$\nobreakdash-unit.
\end{coro}

Corollary~\ref{c:S-units-conditional-non-unit} applied to~$f$ and to~$\frac{1}{f}$ with ${\alpha = 0}$, shows that an affirmative solution to Conjecture~\ref{c:badly-approximable} would yield a version Theorem~\ref{t:units-are-rare} that holds under the weaker hypothesis that~$f$ is not a modular unit, but that is restricted to congruence or to genus zero groups.

\subsection{Proof of Theorem~\ref{t:S-units-conditional}}
\label{ss:proof-S-units-conditional}
Recall that~$\VQ$, and for each~$v$ in~$\VQ$, the norm field~$(\Cv, | \cdot |_v)$, are defined in Section~\ref{ss:disperse}.
Given a finite extension~$K$ of~$\Q$ inside~$\Qalg$, denote by~$\VK$ the set of all norms on~$K$ that for some~$v$ in~$\VQ$ coincide with~$| \cdot |_v$ on~$\Q$.
For such~$w$ and~$v$, write~$w \mid v$, let~$(\Cw, | \cdot |_w)$ be a completion of an algebraic closure of~$(K, w)$ and denote by~$K_w$ the closure of~$K$ inside~$\Cw$.
Note that~$(\Cw, |\cdot|_w)$ and~$(\Cv, | \cdot |_v)$ are isomorphic as normed fields and that~$(K_w, | \cdot |_w)$ is a completion of~$(K, w)$.
Moreover, identify the algebraic closure of~$K$ inside~$\Cw$ with~$\Qalg$, put
\begin{equation}
  \label{eq:72}
  \nu_w \= \frac{[K_w : \Q_v]}{[K : \Q]}
  \text{ and }
  \| \cdot \|_w
  \=
  | \cdot |_w^{\nu_w},
\end{equation}
and for all~$\alpha$ in~$\Cv$ and~$r > 0$ put
\begin{displaymath}
  \bfD_w(\alpha, r)
  \=
  \{ z \in \Cv \colon |z - \alpha|_w < r \}.
\end{displaymath}
Note that ${\nu_w \le 1}$, see, \emph{e.g.}, \cite[Corollary~1.3.2]{BomGub06}.

Let
\begin{equation}
  \label{eq:73}
  \log^+ \colon [0, \infty [ \to \R
  \text{ and }
  \log^- \colon [0, \infty [ \to \R \cup \{ - \infty \}
\end{equation}
be the functions defined by
\begin{displaymath}
  \log^+(x)
  \=
  \log \max \{1, x \}
  \text{ and }
  \log^-(x)
  \=
  \log \min \{1, x \}.
\end{displaymath}
Denote by ${\nh \colon \Qalg \to \R}$ the \emph{Weil} or \emph{naive height}, which for each finite extension~$K$ of~$\Q$ inside~$\Qalg$ and every~$\alpha$ in~$K$ is given by
\begin{equation}
  \label{eq:74}
  \nh(\alpha)
  =
  \sum_{w \in \VK} \log^+ \| \alpha \|_w.
\end{equation}
In this formula, the right side is independent of the finite extension~$K$ of~$\Q$ inside~$\Qalg$ containing~$\alpha$.
Note that by the triangle inequality, for all~$\alpha_0$ and~$\alpha$ in~$\Qalg$ we have
\begin{equation}
  \label{eq:75}
  \nh(\alpha - \alpha_0)
  \ge
  \nh(\alpha) - \nh(\alpha_0) - \log2.
\end{equation}

The proof of Theorem~\ref{t:S-units-conditional} is given after a couple of lemmas.

\begin{lemma}
  \label{l:Mahler}
  Let~$\alpha$ in~$\Qalg$ be given and let~$K$ be a finite extension of~$\Q$ inside~$\Qalg$ that does not necessarily contain~$\alpha$.
  Then, we have
  \begin{equation}
    \label{eq:76}
    \nh(\alpha)
    =
    - \frac{1}{\# \oO_{K}(\alpha)} \sum_{w \in \VK} \sum_{\alpha' \in \oO_{K}(\alpha)} \log^- \|\alpha'\|_w.
  \end{equation}
\end{lemma}

\begin{proof}
  Let~$\hK$ be a finite extension of~$K$ inside~$\Qalg$ containing~$\oO_K(\alpha)$, let
  \begin{equation}
    \label{eq:77}
    P(z) = z^d + a_{d - 1} z^{d - 1} + \cdots + a_0
  \end{equation}
  be the minimal polynomial of~$\alpha$ in~$K[x]$ and for every~$w$ in~$\VK$ put
  \begin{equation}
    \label{eq:78}
    \| P \|_w
    \=
    \max \{ \| a_j \|_w \colon j \in \{0, \ldots, d - 1 \} \}.
  \end{equation}

  For every archimedean~$w$ in~$\VK$ (resp.~$\whw$ in~$\VhK$) put ${S^1_w \= \{ z \in \C_w \colon |z|_w = 1 \}}$ (resp. ${S^1_{\whw} \= \{ z \in \C_{\whw} \colon |z|_{\whw} = 1 \}}$) and denote by~$\lambda_w$ (resp.~$\lambda_{\whw}$) the Haar measure of this group.
  Then, we have
  \begin{equation}
    \label{eq:79}
    \begin{split}
      \sum_{\alpha' \in \oO_K(\alpha)} \log^+ \| \alpha' \|_{w}
      & =
        \int \log \| P(z) \|_{w} \dd \lambda_{w}(z)
      \\ & =
           \sum_{\substack{\whw \in \VhK \\ \whw \mid w}} \int \log \| P(z) \|_{\whw} \dd \lambda_{\whw}(z)
      \\ & =
           \sum_{\substack{\whw \in \VhK \\ \whw \mid w}} \sum_{\alpha' \in \oO_K(\alpha)} \log^+ \| \alpha' \|_{\whw},
    \end{split}
  \end{equation}
  see, \emph{e.g.}, \cite[Corollary~1.3.2 and Proposition~1.6.5]{BomGub06}.
  Similarly, for every non-archimedean~$w$ in~$\VK$ we have
  \begin{equation}
    \label{eq:80}
    \begin{split}
      \sum_{\alpha' \in \oO_K(\alpha)} \log^+ \| \alpha' \|_{w}
      & =
        \log \| P \|_{w}
      \\ & =
           \sum_{\substack{\whw \in \VhK \\ \whw \mid w}} \log \| P \|_{\whw}
      \\ & =
           \sum_{\substack{\whw \in \VhK \\ \whw \mid w}} \sum_{\alpha' \in \oO_K(\alpha)} \log^+ \| \alpha' \|_{\whw},
    \end{split}
  \end{equation}
  see, \emph{e.g.}, \cite[Corollary~1.3.2 and Lemma~1.6.3]{BomGub06}.
  Combined with~\eqref{eq:79} and with the Galois invariance of the Weil height, see, \emph{e.g.}, \cite[Proposition~1.5.17]{BomGub06}, this implies
  \begin{equation}
    \label{eq:81}
    \begin{split}
      \# \oO_K(\alpha) \nh(\alpha)
      & =
        \sum_{\whw \in \VhK} \sum_{\alpha' \in \oO_{K}(\alpha)} \log^+ \|\alpha'\|_{\whw}
      \\ & =
           \sum_{w \in \VK} \sum_{\alpha' \in \oO_{K}(\alpha)} \log^+ \|\alpha'\|_{w}.
    \end{split}
  \end{equation}
  On the other hand, by the product formula applied to the element~$\prod_{\alpha' \in \oO_K(\alpha)} \alpha'$ of~$K$ we have
  \begin{equation}
    \label{eq:82}
    \prod_{w \in \VK} \prod_{\alpha' \in \oO_K(\alpha)} \| \alpha' \|_w
    =
    1,
  \end{equation}
  see, \emph{e.g.}, \cite[Proposition~1.4.4]{BomGub06}.
  Together with~\eqref{eq:81}, this implies the desired identity.
\end{proof}

The following lemma is an extension of~\cite[Lemma~3]{Hab15} to the more general setting considered here.

\begin{lemma}
  \label{l:Colmez-bound-general}
  Let~$K$ be a finite extension of~$\Q$ inside~$\Qalg$ and let~$f$ be a nonconstant modular function defined over~$K$.
  Then, for every singular modulus~$\fsm_0$ of~$f$ there are constants~$A_0 > 0$ and~$B_0$ such that for every singular modulus~$\fsm$ of~$f$ we have
  \begin{equation}
    \label{eq:Colmez-bound-general}
    \nh(\fsm - \fsm_0)
    \ge
    A_0 \log (\# \oO_{K}(\fsm)) + B_0.
  \end{equation}
\end{lemma}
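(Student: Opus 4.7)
The plan is to extend Habegger's Lemma~3 (which provides, via Colmez' bound on the Faltings heights of \CM{} elliptic curves, a lower bound of the form $\nh(\sm) \ge c_0 \log|D_\sm| + O(1)$ for every singular modulus~$\sm$ of the $j$\nobreakdash-invariant) from the $j$\nobreakdash-invariant to an arbitrary nonconstant modular function~$f$ defined over~$K$. I first enlarge~$K$: since $\nh$ is invariant under base-field extension and $\#\oO_K(\fsm)$ changes only by a bounded factor (Proposition~\ref{p:singular-moduli}$(ii)$), we may assume $\fsm_0 \in K$ and that~$f$ admits a modular polynomial $\Phi(X,Y) \in K[X,Y]$, say of degree~$\delta_X$ in~$X$ and~$\delta_Y$ in~$Y$. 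The triangle inequality $\nh(\fsm - \fsm_0) \ge \nh(\fsm) - \nh(\fsm_0) - \log 2$ then reduces the problem to proving an inequality of the form $\nh(\fsm) \ge A \log \#\oO_K(\fsm) + B$ for every singular modulus~$\fsm$ of~$f$.

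The key ingredient is a height-transfer inequality of the form $\nh(\sm) \le C\, \nh(\fsm) + c_1$, where $\sm = j(\tau)$ and $\fsm = f(\tau)$ satisfy $\Phi(\sm,\fsm) = 0$ and where $C > 0$ and $c_1$ depend only on~$\Phi$. To prove it, write $\Phi(X,Y) = \sum_{\ell = 0}^{\delta_X} P_\ell(Y) X^\ell$ with $P_\ell \in K[Y]$ of degree at most~$\delta_Y$. Since~$\Phi$ is irreducible over~$\C$ and depends on both variables, the $P_\ell$'s have no common factor; in particular, $\Phi(X,\fsm)$ vanishes identically in~$X$ for only finitely many values of~$\fsm$ (those in the finite common zero set of all~$P_\ell$'s). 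Discarding this finite exceptional set, $\sm$ is a root of the nonzero polynomial $\Phi(X,\fsm) \in K(\fsm)[X]$ of degree at most~$\delta_X$. A standard height bound for roots of polynomials, combined with the estimate $\nh(P_\ell(\fsm)) \le \delta_Y\, \nh(\fsm) + O_\Phi(1)$ for polynomial evaluations, then yields the claimed inequality.

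Combining ingredients: Colmez' bound gives $\nh(\sm) \ge c_0 \log|D_\sm| + O(1)$, the transfer inequality gives $\nh(\fsm) \ge (c_0/C) \log|D_\sm| + O(1)$, and Proposition~\ref{p:singular-moduli}$(iii)$ (which rests on Siegel's lower bound for class numbers) converts $\log|D_\sm|$ into $\log \#\oO_K(\fsm)$ up to a multiplicative constant arbitrarily close to~$2$. The main obstacle is the height-transfer inequality of the middle paragraph: one must handle carefully the possible vanishing of $P_{\delta_X}(\fsm)$, which would reduce the effective degree of $\Phi(X,\fsm)$ in~$X$, and the constant~$c_1$ depends on~$\Phi$ in a controlled but nontrivial way through the heights of its coefficients. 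The finitely many degenerate values of~$\fsm$ contribute only an $O(1)$ correction, absorbed into the final constant~$B$.
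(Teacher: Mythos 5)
Your proof follows essentially the same approach as the paper's: reduce via the triangle inequality $\nh(\fsm-\fsm_0)\ge\nh(\fsm)-\nh(\fsm_0)-\log 2$, derive the height-transfer bound $\nh(j(\tau)) \le \Delta\,\nh(f(\tau)) + O(1)$ from the relation $\Phi(j(\tau),f(\tau))=0$ using the standard bounds for heights of roots and of polynomial evaluations (Silverman VIII.5.6 and VIII.5.9), and conclude with Habegger's Colmez-based Lemma~3 together with Proposition~\ref{p:singular-moduli}$(iii)$. The two obstacles you flag are in fact non-issues: since $\Phi$ is irreducible over~$\C$ and depends on both variables, $\Phi(X,\fsm)$ is never identically zero, and the root-height estimate uses the projective height of the full coefficient vector, so the possible vanishing of the leading coefficient $P_{\delta_X}(\fsm)$ costs nothing.
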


\begin{proof}
  Let~$\Phi(X, Y)$ be a modular polynomial of~$f$ in~$\Qalg[X, Y]$ and denote by~$\delta_X$ and~$\delta_Y$ the degree of~$\Phi(X, Y)$ in~$X$ and~$Y$, respectively.
  For each~$k$ in~$\{0, \ldots, \delta_X\}$ let~$P_k(Y)$ in~$K[Y]$ be the coefficient of~$X^k$ in~$\Phi(X, Y)$, and let~$M_k > 0$ be such that for every~$\alpha$ in~$\Qalg$ we have
  \begin{equation}
    \label{eq:83}
    \nh(P_k(\alpha))
    \le
    \deg(P_k) \nh(\alpha) + M_k,
  \end{equation}
  see, \emph{e.g.}, \cite[Chapter~VIII, Theorem~5.6]{Sil09}.
  Thus, if we put
  \begin{equation}
    \label{eq:84}
    \Delta
    \=
    \sum_{k = 0}^{\delta_X} \deg(P_k)
    \text{ and }
    M
    \=
    \sum_{k = 0}^{\delta_X} M_k,
  \end{equation}
  then for every quadratic imaginary number~$\tau$ in~$\H$ that is not a pole of~$f$ we have
  \begin{equation}
    \label{eq:85}
    \nh(j(\tau)) - {\delta_X} \log 2
    \le
    \sum_{k = 0}^{{\delta_X}}\nh(P_k(f(\tau)))
    \le
    \Delta \nh(f(\tau)) + M,
  \end{equation}
  see, \emph{e.g.}, \cite[Chapter~VIII, Theorem~5.9]{Sil09}.
  Combined with~\eqref{eq:75}, Proposition~\ref{p:singular-moduli}$(iii)$ and \cite[Lemma~3]{Hab15}, which is based on Colmez' lower bound \cite[\emph{Th{\'e}or{\`e}me~1}]{Col98}, we obtain the desired estimate.
\end{proof}

\begin{proof}[Proof of Theorem~\ref{t:S-units-conditional}]
  Let~$K$ be a finite extension of~$\Q$ containing~$\alpha$ and the coefficients of~$\Phi$ and denote by~$S_0$ the set of all~$w$ in~$\VK$ such that for some~$v$ in ${S \cup \{ \infty \}}$ we have~$w \mid v$.
  Let~$A_0$ be the constant given by Lemma~\ref{l:Colmez-bound-general}.
  By Propositions~\ref{p:singular-moduli}$(ii)$ and~\ref{p:badly-approximable-conditional}, for every~$v$ in~$\VQ$ there are constants ${A_v > 0}$ and~$B_v$ such that for every singular modulus~$\fsm$ of~$f$ we have
  \begin{equation}
    \label{eq:86}
    - \log |\fsm - \alpha|_v
    \le
    A_v \log (\# \oO_K(\fsm)) + B_v.
  \end{equation}
  For every~$w$ in~$\VK$ such that~$w \mid v$, put ${A_w \= A_v}$ and ${B_w \= B_v}$.

  Suppose that there is a sequence of pairwise distinct singular moduli~$(\fsm_n)_{n = 1}^{\infty}$ of~$f$ such that for every~$n$ the difference~$\fsm_n - \alpha$ is an $S$\nobreakdash-unit.
  By Proposition~\ref{p:singular-moduli}$(iv)$, we have
  \begin{equation}
    \label{eq:87}
    \# \oO_{K}(\fsm_n) \to \infty
    \text{ as }
    n \to \infty.
  \end{equation}
  Together with Theorem~\ref{t:disperse} applied to each~$v$ in ${S \cup \{ \infty \}}$, this implies that there is~$r$ in~$]0, 1[$ such that for every~$w$ in~$S_0$ and every sufficiently large~$n \ge 1$, we have
  \begin{displaymath}
    \# (\oO_{K}(\fsm_n) \cap \bfD_w(\alpha, r))
    \le
    \frac{A_0}{2 A_w (\# S_0 + 1)} \# \oO_{K}(\fsm_n).
  \end{displaymath}
  Thus, for every sufficiently large~$n$ we have
  \begin{displaymath}
    \frac{\# (\oO_{K}(\fsm_n - \alpha) \cap \bfD_w(0, r))}{\# \oO_{K}(\fsm_n - \alpha)}
    =
    \frac{\# (\oO_{K}(\fsm_n) \cap \bfD_w(\alpha, r))}{\# \oO_{K}(\fsm_n)}
    \le
    \frac{A_0}{2 A_w (\# S_0 + 1) }.
  \end{displaymath}
  Combined with Lemma~\ref{l:Mahler}, \eqref{eq:86}, the fact that for every~$w$ in~$\VK$ we have~$\nu_{w} \le 1$, and our assumption that~$\fsm_n - \alpha$ is an $S$\nobreakdash-unit, this implies that for some constant~$B$ independent of~$n$ we have
  \begin{multline*}
    \nh(\fsm_n - \alpha)
    \\
    \begin{aligned}
      & =
        - \frac{1}{\# \oO_{K}(\fsm_n - \alpha)} \sum_{w \in S_0} \left( \sum_{\substack{\beta \in \oO_{K}(\fsm_n - \alpha) \\ |\beta|_w < r}} \log \|\beta\|_w + \sum_{\substack{\beta \in \oO_{K}(\fsm_n - \alpha) \\ r \le |\beta|_w < 1}} \log \|\beta\|_w \right)
      \\ & \le
           \sum_{w \in S_0} \frac{\# (\oO_{K}(\fsm_n - \alpha) \cap \bfD_v(0, r))}{\# \oO_{K}(\fsm_n - \alpha)} (A_w \log (\# \oO_{K}(\fsm_n)) + B_w) + (\# S_0 + 1) \log \frac{1}{r}
      \\ & \le
           \frac{A_0}{2} \log (\# \oO_{K}(\fsm_n)) + B.
    \end{aligned}
  \end{multline*}
  In view of~\eqref{eq:87}, letting ${n \to \infty}$ we obtain a contradiction with Lemma~\ref{l:Colmez-bound-general} that completes the proof of the theorem.
\end{proof}

\subsection{Proof of Theorem~\ref{t:S-units-genus-0} and Corollary~\ref{c:S-units-conditional-non-unit}}
\label{ss:proof-S-units-genus-0}

The proofs are given after a few of lemmas.

\begin{lemma}
  \label{l:rational-def-Qalg}
  Let~$V$ be a projective curve defined over~$\Qalg$ and let~$g_0$ be a rational function defined on~$V$ over~$\C$ such that every zero and every pole of~$g_0$ is in~$V(\Qalg)$.
  If there exists~$z_0$ in~$V(\Qalg)$ such that ${g_0(z_0) = 1}$, then~$g_0$ is defined over~$\Qalg$.
\end{lemma}

\begin{proof}
  If~$g_0$ is constant, then it is equal to~$1$ and the result follows.
  Assume~$g_0$ is nonconstant.
  Let~$\sigma$ in~$\Aut(\C|\Qalg)$ be given and denote by~$g_0^{\sigma}$ the image of~$g_0$ under the action of~$\sigma$ on rational functions.
  The hypothesis that every zero and every pole of~$g_0$ is in~$V(\Qalg)$ implies that~$g_0$ and~$g_0^{\sigma}$ have the same zeros and poles, and that the corresponding multiplicities are the same.
  This implies that~$\frac{g_0}{g_0^{\sigma}}$ is constant.
  Evaluating at~$z_0$ and using
  \begin{equation*}
    g_0^{\sigma}(z_0)
    =
    \sigma(g_0(z_0))
    =
    \sigma(1)
    =
    1
    =
    g_0(z_0),
  \end{equation*}
  we conclude that ${g_0 = g_0^{\sigma}}$.
  Since~$\sigma$ is arbitrary, we get that~$g_0$ is defined over~$\Qalg$.
\end{proof}

\begin{lemma}
  \label{l:Hauptmodulization}
  Let~$f$ be a nonconstant modular function defined over~$\Qalg$ for a genus zero subgroup of~$\SL(2, \R)$, such that~$0$ is a value of~$f$.
  Then, there is a holomorphic \emph{Hauptmodul}~$h$ defined over~$\Qalg$ and a nonconstant rational function~$R(X)$ in~$\Qalg(X)$, such that~$0$ is a (non-cuspidal) value of~$h$ and we have
  \begin{equation}
    \label{eq:88}
    R(0) = 0
    \text{ and }
    R(h) = f.
  \end{equation}
\end{lemma}

\begin{proof}
  Our hypotheses imply that the stabilizer~$\Gamma$ of~$f$ in~$\SL(2, \R)$ is of genus zero.
  Put ${\hGamma \= \Gamma \cap \SL(2, \Z)}$ and note that~$\hGamma$ has finite index in~$\Gamma$ and in~$\SL(2, \Z)$.
  Denote by ${\Pi \colon X(\hGamma) \to X(\Gamma)}$ the map induced by the identity on~$\H$ and by~$\whj$ and~$\whf$ the meromorphic functions defined on~$X(\hGamma)$ that are induced by~$j$ and~$f$, respectively.
  Note that the meromorphic function~$f_0$ defined on~$X(\Gamma)$ that is induced by~$f$ satisfies ${\whf = f_0 \circ \Pi}$.

  First, we show that~$X(\hGamma)$ can be defined over~$\Qalg$ in such a way that~$\whj$ and~$\whf$ correspond to rational functions defined over~$\Qalg$.
  Let~$\Phi(X, Y)$ in~$\Qalg[X, Y]$ be a modular polynomial for~$f$ and denote by~$Z(\Phi)$ the zero set of~$\Phi$ in~$\C^2$.
  Denote by~$P$ the finite subset of~$X(\hGamma)$ formed by the poles of~$\whj$ and those of~$\whf$ and let~$\hvarphi$ be the function defined by
  \begin{equation}
    \label{eq:89}
    \begin{array}{rcl}
      \hvarphi \colon X(\hGamma) \ssetminus P & \to & Z(\Phi)
      \\
      z & \mapsto & (\whj(z), \whf(z)).
    \end{array}
  \end{equation}
  By definition of~$\hGamma$, for every~$\gamma$ in~$\SL(2, \Z)$ outside~$\hGamma$ the meromorphic functions~$\whf$ and~$\whf \circ \gamma$ are different.
  Thus, the set~$E_{\gamma}$ of all points of~$X(\hGamma)$ at which these functions agree is finite.
  Let~$\cR$ be a set of representatives of the right cosets of~$\hGamma$ in~$\SL(2, \Z)$ that are different from~$\hGamma$ and put
  \begin{equation}
    \label{eq:90}
    E
    \=
    P \cup \bigcup_{\gamma \in \cR} E_{\gamma}.
  \end{equation}
  Then, $\cR$ and~$E$ are both finite and the restriction of~$\hvarphi$ to ${X(\hGamma) \ssetminus E}$ is injective.
  Thus, $\hvarphi$ induces a birational isomorphism between~$X(\hGamma)$ and~$Z(\Phi)$.
  By \cite[Chapter~I, Corollary~6.11]{Har77}, there exist a smooth projective curve~$V$ defined over~$\Qalg$ and birational isomorphisms
  \begin{equation}
    \phi \colon X(\hGamma) \dashrightarrow V(\C)
    \text{ and }
    \psi \colon V(\C) \dashrightarrow Z(\Phi),
  \end{equation}
  such that~$\psi$ is defined over~$\Qalg$ and~$\psi \circ \phi$ defines the same birational isomorphism as~$\hvarphi$.
  Note that~$\phi$ extends to an isomorphism ${X(\hGamma) \to V(\C)}$, see, \emph{e.g.} \cite[Chapter~I, Proposition~6.8]{Har77}.
  Under this isomorphism, $\whj$ and~$\whf$ correspond to the composition of~$\psi$ with the projections on the first and second coordinate on~$Z(\Phi)$, respectively, both of which are defined over~$\Qalg$.
  Thus, $\phi$ and~$V(\C)$ induce an algebraic structure on~$X(\hGamma)$ over~$\Qalg$ for which~$\whj$ and~$\whf$ correspond to rational functions defined over~$\Qalg$.
  In what follows we fix this algebraic structure on~$X(\hGamma)$.

  Next, we show that~$X(\Gamma)$ can be defined over~$\Qalg$ in such a way that~$\Pi$ corresponds to a rational function defined over~$\Qalg$.
  To do this, it is sufficient to show that there is a biholomorphic map ${h_0 \colon X(\Gamma) \to \P^1(\C)}$ for which the composition~$h_0 \circ \Pi$ is defined over~$\Qalg$.
  Choose pairwise distinct numbers~$\alpha_0$, $\alpha_1$ and~$\alpha_{\infty}$ in~$\Qalg$ and choose $z_0$, $z_1$ and~$z_{\infty}$ in~$f_0^{-1}(\alpha_0)$, $f_0^{-1}(\alpha_1)$ and~$f_0^{-1}(\alpha_{\infty})$, respectively.
  Since~$X(\Gamma)$ is of genus zero, there is a biholomorphic map ${h_0 \colon X(\Gamma) \to \P^1(\C)}$ mapping~$z_0$, $z_1$ and~$z_{\infty}$ to~$0$, $1$ and~$\infty$, respectively.
  Thus, $z_1$ and every zero and every pole of~$h_0 \circ \Pi$ is defined over~$\Qalg$ and therefore~$h_0 \circ \Pi$ is defined over~$\Qalg$ by Lemma~\ref{l:rational-def-Qalg}.
  We conclude that~$X(\Gamma)$ and~$\Pi$ are both defined over~$\Qalg$ with respect to the algebraic structure induced by~$h_0$.
  In what follows we fix this algebraic structure on~$X(\Gamma)$.
  Note that~$f_0$ is also defined over~$\Qalg$, because~$\whf$ is.
  Since the cuspidal values of~$f_0$ are defined over~$\Qalg$ (Proposition~\ref{p:omitted-and-cuspidal}), it follows that each cusp of~$X(\Gamma)$ is also defined over~$\Qalg$.

  To complete the proof of the lemma, note that our hypothesis that~$0$ is a value of~$f$ implies that there is~$\tau_0$ in~$\H$ such that ${f(\tau_0) = 0}$.
  The point~$z_0$ of~$X(\Gamma)$ defined by~$\tau_0$ is not a cusp of~$X(\Gamma)$ and is defined over~$\Qalg$.
  It follows that there is a biholomorphic map ${X(\Gamma) \to \P^1(\C)}$ defined over~$\Qalg$ mapping~$z_0$ to~$0$ and the cusp of~$X(\Gamma)$ defined by~$i\infty$ to~$\infty$.
  The lift~$h$ to~$\H$ of this function is a holomorphic \emph{Hauptmodul} for~$\Gamma$ that is defined over~$\Qalg$ and satisfies ${h(\tau_0) = 0}$.
  From the results proved in the previous paragraphs, it follows that there is~$R(X)$ in~$\Qalg(X)$ such that ${R(h) = f}$.
  Evaluating at~$\tau_0$, we conclude that ${R(0) = 0}$.
  Finally, note that since~$h$ is a \emph{Hauptmodul} and~$0$ is a value of~$h$, we have that~$0$ is a non-cuspidal value of~$h$.
\end{proof}

\begin{lemma}
  \label{l:factorization-genus-0}
  Let~$h$ be a holomorphic \emph{Hauptmodul} defined over~$\Qalg$, let~$R(X)$ in~$\Qalg(X)$ be nonconstant and such that~${R(0) = 0}$ and put ${f \= R(h)}$.
  Suppose that for every finite set of prime numbers~$S$, there are at most a finite number of singular moduli of~$h$ that are $S$\nobreakdash-units.
  Then, $f$ is a nonconstant modular function defined over~$\Qalg$ and for every finite set of prime numbers~$S$ there are at most a finite number of singular moduli of~$f$ that are $S$\nobreakdash-units.
\end{lemma}

\begin{proof}
  That~$f$ is a nonconstant modular function follows from the fact that~$h$ has the same property and that~$R(X)$ is nonconstant.
  To show that~$f$ is defined over~$\Qalg$, note that~$h$ is algebraically dependent with the $j$\nobreakdash-invariant over~$\Qalg$ because~$h$ is defined over~$\Qalg$.
  It follows that~$f$ is also algebraically dependent with the $j$\nobreakdash-invariant over~$\Qalg$ and therefore that it is defined over~$\Qalg$.

  Let~$S$ be a finite set of prime numbers.
  By Corollary~\ref{c:elliptic-S-units}$(i)$ there is a finite set of prime numbers~$S_0$ such that every singular modulus of~$h$ is an $S_0$\nobreakdash-integer.
  Our hypotheses that~$R(X)$ is nonconstant and ${R(0) = 0}$ imply that there are~$\ell$ in~$\N$, $a$ in~${\Qalg \ssetminus \{ 0 \}}$ and monic polynomials~$P(X)$ and~$Q(X)$ in~$\Qalg[X]$, such that
  \begin{equation}
    \label{eq:91}
    P(0)
    \neq
    0,
    Q(0)
    \neq
    0
    \text{ and }
    R(X)
    =
    a X^{\ell} \frac{P(X)}{Q(X)}.
  \end{equation}
  Let~$S_1$ be a finite set of prime numbers containing~$S$ and~$S_0$ and such that each of the coefficients of~$P(X)$ and of~$Q(X)$ is an $S_1$\nobreakdash-integer and each of the numbers~$a$, $P(0)$ and~$Q(0)$ is an $S_1$\nobreakdash-unit.

  By hypothesis, the set~$U$ of all those singular moduli of~$h$ that are~$S_1$\nobreakdash-units is finite.
  Let~$\fsm$ be a singular modulus of~$f$ outside the finite set~$R(U)$, let~$\tau$ be a quadratic imaginary number such that ${f(\tau) = \fsm}$ and put ${\hsm \= h(\tau)}$.
  Then, ${\fsm = R(\hsm)}$ and~$\hsm$ is a singular modulus of~$h$ outside~$U$.
  It follows that~$\hsm$ is an $S_1$\nobreakdash-integer that is not an $S_1$\nobreakdash-unit.
  That is, there is a prime number~$p$ outside~$S_1$ and~$\sigma$ in~$\Gal(\Qalg|\Q)$ such that ${|\sigma(\hsm)|_p < 1}$.
  Denote by~$P^{\sigma}(X)$ and~$Q^{\sigma}(X)$ the image of~$P(X)$ and~$Q(X)$ by the induced action of~$\sigma$ on~$\Qalg[X]$, respectively.
  In view of our choice of~$S_1$, we have
  \begin{equation}
    \label{eq:92}
    |\sigma(a)|_p
    =
    |P^{\sigma}(\sigma(\hsm))|_p
    =
    |Q^{\sigma}(\sigma(\hsm))|_p
    =
    1.
  \end{equation}
  Together with~\eqref{eq:91}, this implies
  \begin{equation}
    \label{eq:93}
    |\sigma(\fsm)|_p
    =
    \left| \sigma(a) \sigma(\hsm)^{\ell} \frac{P^{\sigma}(\sigma(\hsm))}{Q^{\sigma}(\sigma(\hsm))} \right|_p
    =
    |\sigma(\hsm)|_p^{\ell}
    <
    1.
  \end{equation}
  This proves that~$\fsm$ is not an $S_1$\nobreakdash-unit and therefore that it is not an $S$\nobreakdash-unit.
\end{proof}

\begin{proof}[Proof of Theorem~\ref{t:S-units-genus-0}]
  Put ${f_0 \= f - \fsm_0}$ and note that~$0$ is a value of~$f_0$.
  Let~$h$ and~$R$ be given by Lemma~\ref{l:Hauptmodulization} with~$f$ replaced by~$f_0$.
  Combining Proposition~\ref{p:badly-approximable-j}, Lemma~\ref{l:Hauptmodul-sm} and Theorem~\ref{t:S-units-conditional} with~$f$ replaced by~$h$ and with ${\alpha = 0}$, we obtain that for every finite set of prime numbers~$S$, there are at most a finite number of singular moduli of~$h$ that are $S$\nobreakdash-units.
  Together with Lemma~\ref{l:factorization-genus-0}, this implies that~$f_0$ has the same property.
  It follows that for every finite set of prime numbers~$S$, there are at most a finite number of singular moduli~$\fsm$ of~$f$ that ${\fsm - \fsm_0}$ is an $S$\nobreakdash-unit.
\end{proof}

The proof of Corollary~\ref{c:S-units-conditional-non-unit} is after the following lemma.

\begin{lemma}\label{l:factorization-congruence}
  Let~$f$ be a nonconstant modular function defined over~$\Qalg$ for a congruence group~$\Gamma$ contained in~$\SL(2, \Z)$.
  Then, for every cusp~$c$ of~$X(\Gamma)$ there exists~$m$ in~$\N$ and a modular unit~$g$ defined over~$\Qalg$ for~$\Gamma$ such that the following property holds.
  No cusp of~$X(\Gamma)$ different from~$c$ is a zero or a pole of the meromorphic function defined on~$X(\Gamma)$ induced by~$f^mg$.
\end{lemma}

\begin{proof}
  Let~$j_0$ and~$f_0$ be the meromorphic functions defined on~$X(\Gamma)$ induced by the $j$\nobreakdash-invariant and by~$f$, respectively.
  Moreover, let~$Z$ be the finite set of cusps of~$X(\Gamma)$ and for each~$z$ in~$Z$ denote by~$n_z$ the order of~$f_0$ at~$z$.
  If for every~$z$ in~$Z \ssetminus \{c\}$ we have ${n_z = 0}$, then the desired assertion holds with ${m = 1}$ and with~$g$ equal to the constant function equal to~$1$.
  Suppose this is not the case, so the divisor~$D$ on~$X(\Gamma)$ defined by
  \begin{equation}
    \label{eq:94}
    D
    \=
    \left(\sum_{z\in Z\ssetminus\{c\}}n_z\right)c-\sum_{z\in Z\ssetminus\{c\}}n_zz,
  \end{equation}
  is nonzero.
  Note that the degree of~$D$ is zero.
  Applying the Manin--Drinfel'd theorem repeatedly \cite[Theorem~1]{Dri73}, we obtain that there exists~$m$ in~$\N$ and a nonconstant meromorphic function~$g_0$ defined on~$X(\Gamma)$ such that the divisor of zeros and poles of~$g_0$ equals~$mD$.
  It follows that the modular function~$g$ induced by~$g_0$ is a modular unit for~$\Gamma$ such that~$f_0^mg_0$ has no zeros or poles in~$Z\ssetminus \{c\}$.
  To complete the proof of the lemma, it remains to show that there is a nonzero complex number~$s$ such that~$sg$ is defined over~$\Qalg$.
  To do this, note that the Riemann surface~$X(\Gamma)$ has a structure of projective variety defined over~$\Qalg$ for which~$j_0$ is given by a rational function defined over~$\Qalg$, see, \emph{e.g.}, \cite[Chapter~6.7]{Shi71}.
  In particular, each element of~$Z$ is defined over~$\Qalg$ with respect to this algebraic structure.
  Choose a point~$z_0$ in~$X(\Gamma) \ssetminus Z$ defined over~$\Qalg$, note that~$g_0(z_0)$ is a nonzero complex number and put ${s \= g_0(z_0)^{-1}}$.
  By Lemma~\ref{l:rational-def-Qalg} with~$g_0$ replaced by~$s g_0$, the function~$s g_0$ corresponds to a rational function on~$X(\Gamma)$ defined over~$\Qalg$.
  Since this is also the case for~$j_0$, we have that~$j_0$ and~$s g_0$ are algebraically dependent over~$\Qalg$.
  This implies that~$sg$ is defined over~$\Qalg$ and completes the proof of the lemma.
\end{proof}

\begin{proof}[Proof of Corollary~\ref{c:S-units-conditional-non-unit}]
  In the case where~$\alpha$ is non-cuspidal value of~$f$, the desired assertion follows from Theorem~\ref{t:S-units-conditional}.
  Suppose~$\alpha$ is a cuspidal value of~$f$ and let~$\Gamma$ be the stabilizer of~$f$ in~$\SL(2,\R)$.

  Suppose first that~$\Gamma$ is a congruence group, put ${\hGamma \= \Gamma \cap \SL(2,\Z)}$ and let~$c$ be a cusp of~$X(\hGamma)$ at which the meromorphic function~$f_0$ defined on~$X(\hGamma)$ induced by~$f$ takes the value~$\alpha$.
  Let~$m$ and~$g$ be given by Lemma~\ref{l:factorization-congruence} with~$f$ replaced by ${f - \alpha}$ and with~$\Gamma$ replaced by~$\hGamma$ and put ${\whf \= (f - \alpha)^m g}$.
  Then, $0$ is a non-cuspidal value of~$\whf$ or of~$\frac{1}{\whf}$ and Theorem~\ref{t:S-units-conditional} with~$\alpha$ replaced by~$0$ implies that there are at most a finite number of singular moduli of~$\whf$ that are $S$\nobreakdash-units.
  On the other hand, by Corollary~\ref{c:elliptic-S-units} there is a finite set of prime numbers~$S_0$ such that every singular modulus of~$g$ is an $S_0$\nobreakdash-unit.
  Putting ${S_1 \= S \cup S_0}$, we conclude that there are at most a finite number of singular moduli~$\fsm$ of~$f$ such that ${\fsm - \alpha}$ is an $S_1$\nobreakdash-unit.
  Since~$S_1$ contains~$S$, this implies the desired assertion.

  It remains to consider the case where~$\Gamma$ is of genus zero.
  Let~$h$ be the \emph{Hauptmodul} given by Lemma~\ref{l:Hauptmodulization} with~$f$ replaced by ${f - \alpha}$.
  Theorem~\ref{t:S-units-conditional} with~$f$ replaced by~$h$ and with~$\alpha$ replaced by~$0$, implies that for every finite set of prime numbers~$S$ there are at most a finite number of singular moduli of~$h$ that are $S$\nobreakdash-units.
  Together with Lemma~\ref{l:factorization-genus-0}, this implies that ${f - \alpha}$ has the same property.
\end{proof}

\appendix
\section{Fourier series expansion of modular functions}
\label{s:arithmetically-modular}

The goal of this section is to give conditions on a modular function to be defined over a given subfield of~$\C$.

A meromorphic function~$f$ defined on~$\H$ is \emph{periodic}, if there is~$h$ in~$\N$ such that for every~$\tau$ in~$\H$ we have ${f(\tau + h) = f(\tau)}$.
The \emph{period of~$f$} is the least~$h$ satisfying this property.
In this case, $f$ admits a Fourier series expansion at~$i \infty$ of the form
\begin{equation}
  \label{eq:95}
  f(\tau)
  =
  \sum_{n = - \infty}^{\infty} a_n \exp \left( \frac{2 \pi i n}{h} \tau \right).
\end{equation}
The function~$f$ is \emph{meromorphic} (resp. \emph{holomorphic}) \emph{at~$i \infty$}, if for every sufficiently large integer~$n$ (resp. every~$n$ in~$\N$) we have~${a_{- n} = 0}$.

Note that every modular function is periodic and therefore it admits a Fourier series expansion at~$i \infty$.
The goal of this appendix is to prove the following proposition.

\begin{proposition}
  \label{p:arithmetically-modular}
  Let~$f$ be a modular function whose Fourier series expansion at~$i \infty$ has coefficients in a subfield~$K$ of~$\C$.
  Then~$f$ is defined over~$K$.
\end{proposition}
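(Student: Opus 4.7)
The plan is to first produce some polynomial in~$K[X,Y]$ that annihilates~$(j,f)$ identically, then extract an irreducible factor over~$K$, and finally upgrade it to a polynomial that is also irreducible over~$\C$. By Proposition~\ref{p:modular-functions}, there exists $\Phi_0 \in \C[X,Y]$ irreducible over~$\C$ with $\Phi_0(j,f) \equiv 0$; let $(d_X, d_Y)$ denote its bidegree.

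The first step is linear algebra over~$K$. Let $W \= \C[X,Y]_{\le (d_X, d_Y)}$, and let $V \subseteq W$ be the $\C$-subspace of polynomials $\Psi$ satisfying $\Psi(j,f) \equiv 0$. Writing $\Psi(X,Y) = \sum c_{ij} X^i Y^j$ and expanding $\Psi(j(\tau), f(\tau))$ as a Laurent series in $\exp(2\pi i \tau / h)$, where~$h$ is the period of~$f$, each Laurent coefficient is a linear form in the unknowns~$c_{ij}$ whose scalars are polynomial expressions in the Fourier coefficients of~$j$ and~$f$. These scalars lie in~$K$ by hypothesis (and the fact that the Fourier coefficients of~$j$ are integers), so $V$ is cut out by $K$-linear equations. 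Consequently $V = V_K \otimes_K \C$ where $V_K \= V \cap K[X,Y]$, and since $V$ contains the nonzero element~$\Phi_0$, there exists a nonzero $\Psi \in K[X,Y]$ with $\Psi(j,f) \equiv 0$.

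Using that $K[X,Y]$ is a UFD and that the ring of meromorphic functions on~$\H$ is an integral domain, some $K$-irreducible factor~$P$ of~$\Psi$ already satisfies $P(j,f) \equiv 0$. To show that $P$ is irreducible over~$\C$, which by the fact noted after Proposition~\ref{p:modular-functions} is equivalent to being irreducible over~$\Kalg$, factor $P = c\, Q_1 \cdots Q_s$ over~$\Kalg$ with each~$Q_i$ irreducible over~$\Kalg$ and suitably normalized. Since $K$ has characteristic zero, these $Q_i$ are pairwise non-associate. The Galois group $G \= \Gal(\Kalg|K)$ fixes~$P$ and hence permutes $\{Q_1, \ldots, Q_s\}$; $K$-irreducibility of~$P$ forces this action to be transitive, since any proper sub-orbit would give a proper $G$-invariant factor in~$K[X,Y]$.

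Finally, the hypothesis that the Fourier coefficients of~$j$ and~$f$ lie in~$K$ implies, by the same linear algebra as in the first step, that the vanishing $Q(j,f) \equiv 0$ for $Q \in \Kalg[X,Y]$ is $G$-equivariant: applying $\sigma \in G$ to the coefficients of~$Q$ preserves the annihilation. Since $P(j,f) \equiv 0$ yields $Q_i(j,f) \equiv 0$ for some~$i$, transitivity forces $Q_i(j,f) \equiv 0$ for every~$i$, and Proposition~\ref{p:modular-functions}$(ii)$ then forces the~$Q_i$ to be pairwise scalar multiples, contradicting their non-associativity unless $s = 1$. Therefore~$P$ is irreducible over~$\Kalg$, hence over~$\C$, and~$P \in K[X,Y]$ is a modular polynomial of~$f$, proving that~$f$ is defined over~$K$. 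I expect this last $\C$-irreducibility upgrade to be the main obstacle, since a polynomial irreducible over~$K$ may well factor over~$\C$; the uniqueness assertion of Proposition~\ref{p:modular-functions}$(ii)$, combined with the Galois stability of the vanishing condition, is what rules this out here.
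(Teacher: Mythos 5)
Your argument is correct, and it takes a genuinely different route from the paper's proof. The paper starts from a $\C$\nobreakdash-irreducible modular polynomial $\Phi$ and argues by contradiction: if some coefficient lies outside $K$, Lemma~\ref{l:transcendental-deformation} produces a field homomorphism of $K(\cA)$ into~$\C$ fixing~$K$ and moving~$\cA$ (this lemma is needed precisely because the coefficients of~$\Phi$ could be transcendental over~$K$, so Galois automorphisms alone do not suffice), and transporting through it the $K$\nobreakdash-linear relations that the Fourier coefficients impose yields a second annihilating polynomial that cannot be proportional to~$\Phi$, contradicting Proposition~\ref{p:modular-functions}$(ii)$. Your proof instead begins with flat base change: the space of annihilating polynomials of bounded bidegree is cut out by $K$\nobreakdash-linear equations (because the Laurent coefficients of $j^i f^\ell$ lie in~$K$), so it descends to~$K$, producing a nonzero $\Psi\in K[X,Y]$ with $\Psi(j,f)\equiv 0$ and sidestepping the transcendence issue entirely. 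All that then remains is to promote $K$\nobreakdash-irreducibility of a factor~$P$ to absolute irreducibility, which you do via a clean argument in infinite Galois theory over $\Kalg/K$: $\Gal(\Kalg|K)$ acts transitively on the absolutely irreducible factors, the vanishing condition is Galois-equivariant, and the uniqueness in Proposition~\ref{p:modular-functions}$(ii)$ forces a single factor. Both proofs hinge on that uniqueness statement. What yours buys is a cleaner conceptual separation (descent to~$K$, then absolute irreducibility via Galois descent); what the paper's buys is that Lemma~\ref{l:transcendental-deformation} handles algebraic and transcendental coefficients uniformly in one stroke, with no appeal to perfectness of~$K$ (needed in your step to know the $Q_i$ are pairwise non-associate) nor to the base-change stability of irreducibility over algebraically closed fields.
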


The proof of this proposition is after the following lemma.

\begin{lemma}
  \label{l:transcendental-deformation}
  Let~$K$ be a subfield of~$\C$ and let~$\cA$ be a finite subset of~$\C$ that is not contained in~$K$.
  Then, there is a field homomorphism ${K(\cA) \to \C}$ that is the identity on~$K$ and that is different from the inclusion.
\end{lemma}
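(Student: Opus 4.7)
Since $\cA$ is not contained in $K$, fix an element $\alpha \in \cA \setminus K$; the goal is to construct a field homomorphism $\sigma \colon K(\cA) \to \C$ restricting to the identity on $K$ and satisfying $\sigma(\alpha) \neq \alpha$. I would split the argument into two cases depending on whether $\alpha$ is algebraic or transcendental over $K$, in each case first defining $\sigma$ on a convenient subfield containing $\alpha$ and then extending to all of $K(\cA)$.

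Suppose first that $\alpha$ is algebraic over $K$. Its minimal polynomial has degree at least $2$ and, since $\C$ has characteristic zero and is algebraically closed, admits a root $\beta \in \C$ with $\beta \neq \alpha$. The standard $K$\nobreakdash-isomorphism $K(\alpha) \xrightarrow{\sim} K(\beta)$ sending $\alpha$ to $\beta$ provides a $K$\nobreakdash-embedding $\sigma_0 \colon K(\alpha) \to \C$ distinct from the inclusion. To extend $\sigma_0$ to $K(\cA)$, I would pick a transcendence basis $t_1, \ldots, t_d \in \cA$ of $K(\cA)/K(\alpha)$ and send each $t_i$ to itself; this is well-defined because, $K(\beta)/K$ being algebraic, the $t_i$ remain algebraically independent over $\sigma_0(K(\alpha)) = K(\beta)$. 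The resulting homomorphism on $K(\alpha, t_1, \ldots, t_d)$ then extends across the algebraic extension $K(\cA)/K(\alpha, t_1, \ldots, t_d)$ by the algebraic closedness of $\C$.

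Suppose instead that $\alpha$ is transcendental over $K$. I would extend $\{\alpha\}$ to a transcendence basis $B = \{\alpha, t_2, \ldots, t_d\} \subseteq \cA$ of $K(\cA)/K$ and search for $s \in \C$ with $s \neq \alpha$ such that $\{s, t_2, \ldots, t_d\}$ is algebraically independent over $K$. Given such an $s$, the assignment $\alpha \mapsto s$ and $t_i \mapsto t_i$ for $i \geq 2$ defines a $K$\nobreakdash-embedding $K(B) \to \C$, which extends to $K(\cA)$ via the algebraic closedness of $\C$ since $K(\cA)/K(B)$ is algebraic. To produce $s$, note that the algebraic closure $F$ of $K(t_2, \ldots, t_d)$ inside $\C$ has transcendence degree $d - 1$ over $K$, while $\C$ has transcendence degree at least $d$ over $K$ (witnessed by $B$). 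Hence $\C \setminus F$ is nonempty, and since translating any element of $\C \setminus F$ by an element of $F$ stays outside $F$, the complement $\C \setminus F$ is in fact infinite, so we can pick $s \in \C \setminus F$ with $s \neq \alpha$.

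The only subtle point is the final transcendence computation in the tight situation where $\C$ has transcendence degree exactly $d$ over $K$: then no element of $\C$ is transcendental over the whole field $K(B)$, so one cannot simply replace $\alpha$ by a fresh transcendental over $K(B)$. The resolution is that we only require algebraic independence of $s$ from $\{t_2, \ldots, t_d\}$, which the computation above furnishes even in this tight case. Apart from this, the proof is a routine application of standard field theory: separability in characteristic zero, the universal property of $K[X]/(P)$, and the extension of homomorphisms into an algebraically closed field.
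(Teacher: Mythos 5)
Your proof is correct and follows essentially the same two-case strategy as the paper: move a single distinguished element of $\cA$ while fixing a convenient transcendence basis, then extend to $K(\cA)$ by algebraic closedness of $\C$. The one place where the two arguments diverge in a meaningful way is the transcendental case. You locate the target $s$ by comparing transcendence degrees and then must address the tight situation where $\operatorname{trdeg}(\C/K)$ equals $d$ exactly, and also must argue that $\C \setminus F$ contains an element other than $\alpha$. The paper sidesteps all of this by fixing a transcendence basis $\cA_0 \subseteq \cA$ over $\overline{K}$ and sending a chosen $a_0 \in \cA_0$ to $2a_0$: this element is automatically algebraically independent from the rest of the basis (scaling by a nonzero constant in $\overline{K}$ preserves algebraic independence) and differs from $a_0$ since $a_0$, being transcendental, is nonzero. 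This buys a shorter argument with no case analysis on transcendence degree. The other difference is purely organizational: the paper splits on whether $\cA \subseteq \overline{K}$ (invoking the primitive element theorem in the algebraic case), whereas you split on whether a single fixed $\alpha \in \cA \setminus K$ is algebraic or transcendental; both reductions are sound and lead to the same constructions.
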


\begin{proof}
  Denote by~$\overline{K}$ the algebraic closure of~$K$ inside~$\C$.

  Suppose first that~$\cA$ is contained in~$\Kalg$.
  By the primitive element theorem, there is~$\alpha$ in~$\Kalg$ such that ${K(\cA) = K(\alpha)}$.
  Our assumption that~$\cA$ is not contained in~$K$ implies that the minimal polynomial of~$\alpha$ over~$K$ is of degree at least two.
  Thus, this polynomial has a root~$\alpha'$ different from~$\alpha$.
  It follows that there is a field homomorphism ${K(\alpha) \to \C}$ that is the identity on~$K$ and that maps~$\alpha$ to~$\alpha'$.
  It is thus different from the inclusion.

  It remains to consider the case where~$\cA$ is not contained in~$\Kalg$.
  In this case, there is a nonempty subset~$\cA_0$ of~$\cA$ that is algebraically independent over~$\Kalg$.
  Increasing~$\cA_0$ if necessary, assume it is maximal with this property.
  Then, $\Kalg(\cA)$ is a finite extension of~$\Kalg(\cA_0)$.
  Since~$\Kalg(\cA_0)$ is isomorphic to the field of rational functions with coefficients in~$\Kalg$ in~$\# \cA_0$ variables, there is a field isomorphism ${\sigma \colon \Kalg(\cA_0) \to \Kalg(\cA_0)}$ that is the identity on~$\Kalg$ and such that for some~$a_0$ in~$\cA_0$ we have ${\sigma(a_0) = 2 a_0}$.
  Since~$\Kalg(\cA)$ is a finite extension of~$\Kalg(\cA_0)$ and~$\C$ is algebraically closed, $\sigma$ extends to a field homomorphism ${\Kalg(\cA) \to \C}$.
\end{proof}

\begin{proof}[Proof of Proposition~\ref{p:arithmetically-modular}]
  We use that~$1/j$ is holomorphic at~$i \infty$, see, \emph{e.g.}, \cite[Chapter~4, Section~1]{Lan87}.
  Replacing~$f$ by~$1 / f$ if necessary, assume that~$f$ is also holomorphic at~$i \infty$.
  Let~$\Phi(X, Y)$ be a modular polynomial of~$f$ in~$\C[X, Y]$ (Proposition~\ref{p:modular-functions}).
  Replacing~$\Phi$ by a constant multiple if necessary, assume that one of the coefficients of~$\Phi$ is equal to~$1$.
  Denote by~$\delta$ the degree of~$X$ in~$\Phi(X, Y)$, and note that the polynomial
  \begin{equation}
    \label{eq:96}
    \Psi(X, Y)
    \=
    X^{\delta} \Phi(1/X, Y)
  \end{equation}
  in~$\C[X, Y]$ is also irreducible.

  For each pair of nonnegative integers~$(k, \ell)$, denote by~$A_{k, \ell}$ the coefficient of~$X^k Y^{\ell}$ in~$\Psi(X, Y)$.
  Moreover, denote by~$I$ the set of all~$(k, \ell)$ such that ${A_{k, \ell} \neq 0}$.
  By our normalization of~$\Phi$, there is ${(k_0, \ell_0)}$ such that ${A_{k_0, \ell_0} = 1}$.
  Suppose that~$\Psi(X, Y)$ is not in~$K[X, Y]$, so the set
  \begin{equation}
    \label{eq:97}
    \cA
    \=
    \{ A_{k, \ell} \colon (k, \ell) \in I \}
  \end{equation}
  is not contained in~$K$.
  By Lemma~\ref{l:transcendental-deformation}, there is a field homomorphism ${\sigma \colon K(\cA) \to \C}$ that is the identity on~$K$ and that is different from the inclusion.
  It follows that for some~$(k', \ell')$ in~$\cA$ we have ${\sigma(A_{k', \ell'}) \neq A_{k', \ell'}}$.

  For each integer ${n \ge 0}$, denote by~$a_n^{k, \ell}$ the coefficient of~$\exp \left( \frac{2 \pi i n}{h} \tau \right)$ in the Fourier series expansion of~$(1/j)^k f^{\ell}$.
  Since~$1/j$ is holomorphic at~$i \infty$ and its Fourier series expansion has coefficients in~$\Q$, see, \emph{e.g.}, \cite[Chapter~4, Section~1]{Lan87}, our hypothesis implies that~$a_n^{k, \ell}$ is in~$K$.
  On the other hand, the fact that the function~$\Psi(1/j, f)$ vanishes identically implies that for every integer ${n \ge 0}$ we have
  \begin{equation}
    \label{eq:98}
    \sum_{(k, \ell) \in I} A_{k, \ell} a_n^{k, \ell}
    =
    0
    \text{ and }
    \sum_{(k, \ell) \in I} \sigma(A_{k, \ell}) a_n^{k, \ell}
    =
    0.
  \end{equation}
  It follows that the polynomial
  \begin{equation}
    \label{eq:99}
    \Psi_0(X, Y)
    \=
    \sum_{(k, \ell) \in I} (\sigma(A_{k, \ell}) - A_{k, \ell}) X^k Y^{\ell}
  \end{equation}
  in~$\C[X, Y]$, is such that the function~$\Psi_0(1/j, f)$ vanishes identically.
  Note also that~$\Psi$ is nonzero, because the coefficient of~$X^{k'} Y^{\ell'}$ in~$\Psi_0(X, Y)$ is nonzero by our choice of~$\sigma$.
  Moreover, the coefficient of~$X^{k_0} Y^{\ell_0}$ in~$\Psi_0(X, Y)$ is zero, so~$\Psi_0$ is not a scalar multiple of~$\Psi$.

  Consider the polynomial
  \begin{equation}
    \label{eq:100}
    \Phi_0(X, Y)
    \=
    X^{\delta} \Psi_0(1/X, Y)
  \end{equation}
  in~$\C[X, Y]$.
  The functions~$\Phi(j, f)$ and~$\Phi_0(j, f)$ vanish identically.
  By Proposition~\ref{p:modular-functions}$(i)$, this implies that the polynomial~$\Phi_0$ vanishes on the zero set of~$\Phi$.
  Since~$\Phi$ is irreducible over~$\C$, we conclude that~$\Phi$ divides~$\Phi_0$.
  Since the degree of~$\Phi_0$ in~$X$ and in~$Y$ is less than or equal to the corresponding degree of~$\Phi$, we conclude~$\Phi_0$ is a scalar multiple of~$\Phi$.
  However, this would imply that~$\Psi_0$ is a scalar multiple of~$\Psi$, which is false.
  This contradiction proves that~$\Phi(X, Y)$ is in~$K[X, Y]$, and completes the proof of the proposition.
\end{proof}

\bibliographystyle{alpha}

\end{document}